\begin{document}
\newtheorem{theorem}{Theorem}[section]
\newtheorem{lemma}[theorem]{Lemma}
\newtheorem{definition}[theorem]{Definition}
\newtheorem{claim}[theorem]{Claim}
\newtheorem{example}[theorem]{Example}
\newtheorem{remark}[theorem]{Remark}
\newtheorem{proposition}[theorem]{Proposition}
\newtheorem{corollary}[theorem]{Corollary}

\newcommand{\im}{\operatorname{Im}}
\newcommand{\Ker}{\operatorname{Ker}}
\newcommand{\Int}{\operatorname{Int}}

\title{Local spectral expansion approach to high dimensional expanders part I: Descent of spectral gaps}
\author{Izhar Oppenheim}
\affil{Department of Mathematics, Ben-Gurion University of the Negev, Be'er Sheva 84105, Israel, izharo@bgu.ac.il}

\maketitle

\begin{abstract}
This paper introduces the notion of local spectral expansion of a simplicial complex as a possible analogue of spectral expansion defined for graphs. We then show that the condition of local spectral expansion for a complex yields various spectral gaps in both the links of the complex and the global Laplacians of the complex.
\begin{flushleft}
\textbf{Mathematics Subject Classification (2010)}. Primary 05E45, Secondary 05A20, 05C81.
\end{flushleft}
\end{abstract}

\section{Introduction}

Let  $G=(V,E)$ be a finite graph without loops or multiple edges. For a vertex $u \in V$, denote by $m(u)$ the valency of $u$, i.e.,
$$m(u) = \vert \lbrace (u,v) \in E \rbrace \vert . $$

Recall that the (normalized) Laplacian on $G$ is a positive operator $\mathcal{L}$ on $L^2 (V,\mathbb{R})$ defined by the matrix
$$\mathcal{L} (u,v)= \begin{cases}
1 & u=v \\
-\dfrac{1}{\sqrt{m(u)m(v)}} & (u,v) \in E \\
0 & \text{otherwise}
\end{cases}$$
If $G$ is connected then $\mathcal{L}$ has the eigenvalue $0$ with multiplicity $1$ (the eigenvector is the constant function) and all other the eigenvalues are positive. Denote by $\lambda (G)$ the smallest positive eigenvalue of $\mathcal{L}$ of $G$. $\lambda (G)$ is often referred to as the spectral gap of $G$. A family of graphs $\lbrace G_j = (V_j, E_j) \rbrace_{j \in \mathbb{N}}$ is a family of expanders if all the graphs $G_j$ are connected, $\vert V_j \vert \rightarrow^{j \rightarrow \infty} \infty$, and there is $\lambda >0$ such that for every $j$, $\lambda (G_j) \geq \lambda$.
\begin{remark}
In some sources, the definition of an expander family also include the condition that the valency of the graphs is uniformly bounded, i.e., that
 $\sup_j \sup_{v \in V_j} m(v) < \infty$.
\end{remark}

For some applications one is interested not just $\lambda (G)$ but also in the largest eigenvalue of $\mathcal{L}$, denoted here as $\kappa (G)$. For $\lambda >0, \kappa <2$, we shall call $G$ a two-sided $(\lambda, \kappa)$-expander if $\lambda (G) \geq \lambda$ and $\kappa (G) \leq \kappa$. 

In recent years, expanders had vast applications in pure and applied mathematics (see \cite{LubExpGr}). This fruitfulness of the theory of expander graph, raises the question - what should be the high dimensional analogue of expanders?, i.e., what is the analogous definition of an expander complex when one considers a $n$-dimensional simplicial complex, $X$, instead of a graph. In \cite{LubHighDim} two main approaches are suggested: The first is through the $\mathbb{F}_2$-coboundary expansion of $X$ originated in \cite{Grom}, \cite{LM} and \cite{MW} . The second is through studying the spectral gap of the $(n-1)$-Laplacian of $X$ (where $n$ is the dimension of $X$)  or the spectral gaps of all $0,..,(n-1)$-Laplacians of $X$ (see \cite{P}, \cite{PRT}). One of the difficulties with both approaches are that both the $\mathbb{F}_2$-coboundary expansion and the spectral gap of the $(n-1)$-Laplacian are usually hard to calculate or even bound in examples. 

This paper suggests a new approach that we call ``local spectral expansion'' (or $1$-dimensional spectral expansion). For a simplicial complex $X$, we denote $X^{(j)}$ to be the set of all $j$-simplices in $X$. Recall that for a simplicial complex $X$ of dimension $n$ and a simplex $\lbrace u_0,...,u_k \rbrace \in X^{(k)}$, the link of $\lbrace u_0,...,u_k \rbrace$ denoted $X_{\lbrace u_0,...,u_k \rbrace}$ is a simplicial complex of dimension $\leq n-k-1$ defined as:
$$X_{\lbrace u_0,...,u_k \rbrace}^{(j)} = \lbrace \lbrace v_0,...,v_j \rbrace \in X^{(j)} : \lbrace u_0,...,u_k,v_0,...,v_j \rbrace \in X^{(k+j+1)} \rbrace .$$
Note that if $X$ is pure $n$ dimensional (i.e., every simplex of $X$ is a face of a simplex of dimension $n$), then $X_{\lbrace u_0,...,u_k \rbrace}$ is of dimension exactly $n-k-1$. Next, we can turn to define local spectral expansion:
\begin{definition}
For $\lambda >\frac{n-1}{n}$, a pure $n$-dimensional simplicial complex will be said to have $\lambda$-local spectral expansion if:
\begin{itemize}
\item The $1$-skeleton of $X$ and the $1$-skeletons of all its links (in all dimensions $>0$) are connected. 
\item Every $1$-dimensional link of $X$ has a spectral gap $\geq \lambda$, i.e.,
$$\forall \sigma \in X^{(n-2)}, \lambda (X_{\sigma} ) \geq \lambda .$$
\end{itemize} 
For $\lambda >\frac{n-1}{n} ,\kappa <2$, a pure $n$-dimensional simplicial complex will be said to have two sided $(\lambda, \kappa)$-local spectral expansion if:
\begin{itemize}
\item The $1$-skeleton of $X$ and the $1$-skeletons of all its links (in all dimensions $>0$) are connected.  
\item The non zero spectrum of every $1$-dimensional link is contained in the interval $[\lambda , \kappa]$, i.e.,
$$\forall \sigma \in X^{(n-2)}, \lambda (X_{\sigma} ) \geq \lambda, \kappa (X_{\sigma} ) \leq \kappa .$$
\end{itemize} 
\end{definition}

We remark that for $n=1$, both of the above definitions coincide with the usual definitions for graphs when using the convention $X^{(-1)} = \lbrace \emptyset \rbrace$ (by this convention $X_\emptyset = X$).  

A main advantage of the above definition is that the spectrum of the $1$-dimensional links is usually easy to bound or even calculate explicitly in examples. In this paper we shall show that the local spectral expansion of $X$ implies spectral gaps in all the $1$-skeletons of all links of $X$ (in every dimension) and also spectral gaps of the global high-dimensional Laplacians of $X$. 

\begin{remark}
Our results stated below refer to a spectral gaps of weighted graphs, where the links are weighted according to higher dimensional structure of the simplicial complex. Our choice of weights is not identical to the choice of weights in other works, but the results are transferable to other common choices of weight functions - see remark \ref{weight system remark} below.
\end{remark}

We prove the following results:
\begin{theorem}
\label{Intro-theorem1}
Let $X$ be a pure $n$-dimensional simplicial complex. Assume that there are $\kappa \geq \lambda >\frac{n-1}{n}$ such that $X$ is a $(\lambda,\kappa)$-expander, then for every $k$-dimensional simplex $\tau$ of $X$, 
$$f^{n-k-2} (\lambda) \leq \lambda (X_\tau) \text{ and } \kappa (X_\tau) \leq f^{n-k-2} (\kappa),$$
where $\lambda (X_\tau)$, $\kappa (X_\tau)$ are the smallest and largest positive eigenvalues of the $1$-skeleton of $X_\tau$, $f$ is the function $f(x)=2-\frac{1}{x}$ and $f^l$ denotes $f$ composed with itself $l$ times: explicitly,
$$f^l (x) = \dfrac{(l+1)x-l}{l x -(l-1)}.$$
\end{theorem}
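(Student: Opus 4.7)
The plan is a descending induction on $k$, combined with a ``trickle--down'' inequality for weighted $0$-Laplacians which I expect to be the main technical result of the paper. The base case is $k = n-2$: here $n-k-2 = 0$, $f^0 = \mathrm{id}$, and $X_\tau$ is itself $1$-dimensional, so the desired bounds $\lambda \leq \lambda(X_\tau)$ and $\kappa(X_\tau) \leq \kappa$ are just the two-sided $(\lambda,\kappa)$-local spectral expansion hypothesis applied to $\tau \in X^{(n-2)}$.

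For the inductive step, let $\tau$ be a $k$-simplex with $k < n-2$, so $X_\tau$ is pure of dimension $n-k-1 \geq 2$. For every vertex $v$ of $X_\tau$ we have the identification $(X_\tau)_v = X_{\tau \cup \{v\}}$, a link of a $(k+1)$-simplex of $X$. The inductive hypothesis yields
$$f^{n-k-3}(\lambda) \leq \lambda\bigl((X_\tau)_v\bigr), \qquad \kappa\bigl((X_\tau)_v\bigr) \leq f^{n-k-3}(\kappa).$$
At this point I invoke the key \emph{trickle--down lemma}: for a pure $d$-dimensional complex $Y$ with $d \geq 2$, connected $1$-skeleton, and connected vertex links, if the nonzero spectrum of every vertex-link $1$-skeleton lies in $[\mu_-, \mu_+]$ with $\mu_- > 1/2$, then the nonzero spectrum of the $1$-skeleton of $Y$ itself lies in $[f(\mu_-), f(\mu_+)]$ where $f(x) = 2 - 1/x$. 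Since $f$ is strictly increasing on $(1/2,\infty)$, applying this lemma to $Y = X_\tau$ with $\mu_\pm = f^{n-k-3}(\lambda), f^{n-k-3}(\kappa)$ gives
$$f^{n-k-2}(\lambda) = f\bigl(f^{n-k-3}(\lambda)\bigr) \leq \lambda(X_\tau), \qquad \kappa(X_\tau) \leq f\bigl(f^{n-k-3}(\kappa)\bigr) = f^{n-k-2}(\kappa),$$
as required. A short computation using the closed form $f^l(x) = ((l+1)x - l)/(lx - (l-1))$ shows that the standing hypothesis $\lambda > (n-1)/n$ forces $f^{n-k-3}(\lambda) > 1/2$ at every step of the descent, so the trickle--down hypothesis is automatically maintained throughout the induction.

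The entire argument therefore reduces to proving the trickle--down lemma, and this is where the main obstacle lies. In the spirit of Garland's method, I would attack it by expressing the Rayleigh quotient of the weighted $0$-Laplacian on $Y$ as a weighted average, over edges and then over vertices, of Rayleigh-quotient-like contributions coming from the $1$-skeletons of the vertex links $Y_v$. The uniform spectral gap on each $Y_v$ controls the link contributions; the constant $2 - 1/\mu$ then emerges from balancing a ``diagonal'' self-contribution at each vertex against the link contribution via a Cauchy--Schwarz-type estimate. The delicate technical point is calibrating the weight system so that the local spectra assemble exactly, with the sharp constant $f(\mu)$ rather than a lossy version; this is presumably the role of the bespoke higher-dimensional weights flagged in the remark preceding the theorem, and the reason the author highlights that the choice of weights differs from other sources in the literature.
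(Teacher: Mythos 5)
Your reduction is exactly the paper's: the theorem is proved there by a descending induction (Corollary \ref{SpectralGapDescent2}) whose engine is precisely the trickle--down statement you isolate (Lemma \ref{SpectralGapDescent1}: if the nonzero spectrum of every vertex--link $1$-skeleton of $X_\tau$ lies in $[\lambda,\kappa]$, then the nonzero spectrum of the $1$-skeleton of $X_\tau$ lies in $[2-\tfrac{1}{\lambda},\,2-\tfrac{1}{\kappa}]$), and your bookkeeping --- the base case $k=n-2$, the identification $(X_\tau)_v=X_{\tau\cup\{v\}}$, and the observation that $\lambda>\frac{n-1}{n}$ keeps $f^{j}(\lambda)>\frac{k+1}{k+2}$ so the lemma can be re-applied --- matches the paper. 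But you have not proved the lemma, and you explicitly defer it as ``the main obstacle,'' so as it stands the argument has a genuine gap exactly where all the content lives.

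Moreover, the mechanism you sketch for the lemma (averaging Rayleigh quotients over vertices and extracting $2-\tfrac{1}{\mu}$ from a Cauchy--Schwarz balancing) is not how the constant arises, and I doubt it can be made to work for an arbitrary test function: the paper's argument is an exact identity that uses in an essential way that $\phi$ is an \emph{eigenfunction}. Concretely, take $\phi$ a nonconstant eigenfunction of $\Delta_{\tau,0}^+$ with eigenvalue $\mu>0$. The restriction lemmas (Lemmas \ref{restNorm1} and \ref{restNorm2}) give $\mu\Vert\phi\Vert^2=\Vert d_\tau\phi\Vert^2=\sum_v\Vert d_{\tau v}\phi^v\Vert^2$ and $\sum_v\Vert\phi^v\Vert^2=\Vert\phi\Vert^2$. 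The crucial step is that the eigenvalue equation at the vertex $v$ says the weighted mean of $\phi$ over the neighbours of $v$, i.e.\ $\Delta_{\tau v}^-\phi^v$, equals $(1-\mu)\phi(v)$; hence $\sum_v\Vert\Delta_{\tau v}^-\phi^v\Vert^2=(1-\mu)^2\Vert\phi\Vert^2$ and so $\sum_v\Vert(\phi^v)^1\Vert^2=\mu(2-\mu)\Vert\phi\Vert^2$ exactly. Sandwiching $\sum_v\Vert d_{\tau v}\phi^v\Vert^2$ between $\lambda$ and $\kappa$ times this quantity and dividing by $\mu\Vert\phi\Vert^2$ yields $\lambda(2-\mu)\le 1\le\kappa(2-\mu)$, i.e.\ $2-\tfrac{1}{\lambda}\le\mu\le 2-\tfrac{1}{\kappa}$, with no loss and no Cauchy--Schwarz. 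Also note that the balanced weight system enters only through the clean restriction identities above, not through any delicate calibration. To complete your proof you should replace your heuristic paragraph with this computation.
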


In the case of partite complexes, we prove that if $X$ is $(n+1)$-partite then for every $\tau$ of dimension $k$, $\kappa (X_\tau) = \frac{n-k}{n-k-1}$ (this is a generalization of the fact that the Laplacian of every bipartite graph has an eigenvalue $2$). In the partite case, we can bound the spectrum of the links from above if we omit this largest eigenvalue:
\begin{theorem}
\label{Intro-theorem2}
Let $X$ be a pure $n$-dimensional simplicial complex such that all its links (in all dimensions $>0$) are connected. Assume that $X$ is $(n+1)$-partite and that there is $\lambda >\frac{n-1}{n}$ such that 
$$\forall \sigma \in X^{(n-2)}, \lambda \leq \lambda (X_\sigma),$$
then for every $k$-dimensional simplex $\tau$ of $X$, if $\mu$ is an eigenvalue of the Laplacian of the $1$-skeleton of $X_\tau$ and $\mu < \frac{n-k}{n-k-1}$, then
$$f^{n-k-2} (\lambda) \leq \mu \leq 1-(n-k)(1-f^{n-k-2} (\lambda)),$$
where $f$ is the function $f(x)=2-\frac{1}{x}$ and $f^l$ denotes $f$ composed with itself $l$ times.
\end{theorem}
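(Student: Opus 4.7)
The plan is a downward induction on $k$ (equivalently, induction on $n-k-2$), using the proof of Theorem~\ref{Intro-theorem1} as a template. The first step is to observe that the $(n+1)$-partiteness of $X$ restricts to $(n-k)$-partiteness of each link $X_\tau$ of a $k$-simplex, so that the $1$-skeleton of $X_\tau$ always carries the ``partite'' eigenvalue $(n-k)/(n-k-1)$ with multiplicity at least $n-k-1$, arising from the subspace of functions constant on each side with zero weighted mean; the theorem concerns only the remaining (``non-partite'') part of the spectrum. The base case $k=n-2$ then reduces to a statement about bipartite graphs: the Laplacian spectrum is symmetric about $1$ in $[0,2]$, so the lower bound is the hypothesis, while the stated upper bound $2\lambda-1$ is nonempty only when $\lambda=1$, in which case bipartite symmetry together with $\lambda(X_\sigma)=1$ forces the non-partite spectrum to be $\{1\}$ (the link must be complete bipartite); for $\lambda<1$ the base case is vacuously satisfied.

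For the inductive step, fix $\tau\in X^{(k)}$ and an eigenfunction $g$ of the $1$-skeleton of $X_\tau$ with eigenvalue $\mu<(n-k)/(n-k-1)$. I would apply the same Garland-style Rayleigh-quotient decomposition used in the proof of Theorem~\ref{Intro-theorem1}: this expresses the Rayleigh quotient of $g$ as a weighted average, over $v\in X_\tau^{(0)}$, of Rayleigh quotients of the restrictions of $g$ to the $(n-k-1)$-partite smaller links $X_{\tau\cup\{v\}}$. By the inductive hypothesis the non-partite spectra of these smaller links lie in the interval $[f^{n-k-3}(\lambda),\,1-(n-k-1)(1-f^{n-k-3}(\lambda))]$, and one Garland step propagates the lower endpoint through $f$ to yield $\mu\ge f^{n-k-2}(\lambda)$, exactly as in Theorem~\ref{Intro-theorem1}.

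The main obstacle is the upper bound. Here one uses that $\mu<(n-k)/(n-k-1)$ forces $g$ to be orthogonal (in the weighted inner product on $X_\tau^{(0)}$) to the $(n-k-1)$-dimensional partite eigenspace of $X_\tau$; this global orthogonality constrains the components of the local restrictions of $g$ inside the partite eigenspaces of the smaller links $X_{\tau\cup\{v\}}$. The key algebraic step is to verify that combining the inductive upper bound on the smaller non-partite spectra with this orthogonality condition propagates the upper bound through one Garland step via the recursion equivalent to the affine map $x\mapsto 1-(n-k)(1-x)$ applied to $f^{n-k-2}(\lambda)$ --- i.e., that this affine map is the correct ``partite dual'' of $f$ under one descent step in the presence of $(n-k)$-partite structure. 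Carrying out the bookkeeping of global and local partite projections, and checking that the recursion closes to give exactly $1-(n-k)(1-f^{n-k-2}(\lambda))$, is where the bulk of the technical work lies.
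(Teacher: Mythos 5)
Your lower bound is fine and matches the paper: it is just the non-partite descent of Corollary \ref{SpectralGapDescent2}. The problem is the upper bound, where your plan has a genuine gap. You propose to propagate the truncated upper bound inductively through the links, and you correctly identify the obstruction -- the restrictions $\phi^v$ of a non-trivial eigenfunction of $X_\tau$ to the smaller links $X_{\tau v}$ are in general \emph{not} orthogonal to the partite eigenspaces of those smaller links -- but you then defer exactly this point to ``bookkeeping'' without giving any mechanism for it. This is the crux, not a technicality: the inductive hypothesis only controls the non-partite part of the spectrum of $X_{\tau v}$, so the Garland identity $\mu\Vert\phi\Vert^2=\sum_v\Vert d_{\tau v}\phi^v\Vert^2$ gives an upper bound on $\mu$ only after you quantify how much of each $\phi^v$ sits in the eigenspace of the partite eigenvalue $\frac{n-k-1}{n-k-2}$; with no control on that component the descent step only returns the trivial bound $f(\frac{n-k-1}{n-k-2})=\frac{n-k}{n-k-1}$. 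Moreover a rough check suggests that a ``clean'' exclusion of the partite components at the lower level would produce a coefficient $n-k-1$ rather than the claimed $n-k$, so it is not even clear your recursion closes to the stated bound. Your base case is also mishandled: if you read the conclusion literally (with the interval $[\lambda,2\lambda-1]$), an empty target interval does not make the claim ``vacuously satisfied'' -- it makes it false whenever an eigenvalue $\mu<2$ exists, which it does for any connected bipartite link with more than two vertices.

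The paper's route to the upper bound is entirely different and is a one-shot argument, not an induction. It first proves Lemma \ref{spectral bound in the partite case}: for an $(n+1)$-partite complex, if $\lambda(X)$ and $\kappa(X)$ denote the extremes of the spectrum with $0$ and the partite eigenvalue $\frac{n+1}{n}$ removed, then $\kappa(X)\leq 1+n(1-\lambda(X))$. This is an approximate spectral symmetry about $1$, proved by testing against the functions $\phi_i$ obtained from an extremal eigenfunction $\phi$ by multiplying by $-n$ on the side $S_i$; these satisfy $\sum_i\Vert\phi_i\Vert^2=(n^2+n)\Vert\phi\Vert^2$ and $\sum_i\langle\phi_i,\Delta_0^+\phi_i\rangle=\langle\phi,((n+1)^2I-(n+1)\Delta_0^+)\phi\rangle$, and they remain orthogonal to the span of the side indicators, so the Rayleigh quotient bound for $\lambda(X)$ applies to them. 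The theorem then follows by applying this lemma once, directly to $X_\tau$ (which is $(n-k)$-partite), with $\lambda(X_\tau)\geq f^{n-k-2}(\lambda)$ supplied by the non-partite descent. If you want to salvage your plan, the side-flipping trick of Lemma \ref{spectral bound in the partite case} is the missing ingredient; without it, or some substitute that controls the partite components of the local restrictions, the upper bound does not follow.
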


Combining these results with Garland's method (or similar arguments in the partite case), we are able to derive spectral gap of higher Laplacians based on local spectral expansion - see Corollaries \ref{SpecGapCor}, \ref{contraction corollary} and \ref{norm bound - n+1 partite case} below. In a follow up article, we will show how to use these corollaries to deduce mixing results and geometric overlap given local spectral expansion is partite and non-partite simplicial complexes.

The moral of all the theorems and the corollaries mentioned above is that given that the spectra of the $1$-dimensional links in concentrated near $1$ and that the complex has connected links, we can expect good spectral gaps, i.e., the sepctra is concentrated near $1$, in all the Laplacians defined on the complex (both the higher Laplacians and the Laplacians of the links).

\textbf{Structure of this paper.} Section 2 lays out the framework and notations. Section 3 discusses links of simplcial complexes and the concepts of localization and restriction. Section 4 gives the main results about spectral gaps of the links in the non-partite case. Section 5 gives the main results in the partite case. Section 6 gives the results regarding the spectral gaps of higher Laplcians.

\section{Framework}
The framework suggested here owes its existence to the framework suggested in \cite{BS}. Throughout this paper, $X$ is pure $n$-dimensional finite simplicial complex, i.e., every simplex in $X$ is contained in at least one $n$-dimensional simplex. 
\subsection{Weighted simplicial complexes}
Our results in the the introduction were stated with respect to a specific weight function, which was not explicitly described. Our results hold for any balanced weight function (see definition below) and therefore we will work in the general setting of weighted simplicial complex defined below. The weight function used in the introduction will be defined below as the homogeneous weight function. 

For $-1\leq k\leq n$, denote:
\begin{itemize}
\item $X^{(k)}$ is the set of all $k$-simplices in $X$.
\item $\Sigma(k)$ the set of ordered $k$-simplices, i.e., $\sigma \in \Sigma(k)$ is an ordered $(k+1)$-tuple of vertices that form a $k$-simplex in $X$.
\end{itemize} 
Note the  $\Sigma (-1) = X^{(-1)}$ is just the singleton $\lbrace \emptyset \rbrace$. 
\begin{definition}
\label{weightedDef}
A weight function on a simplicial complex $X$ is a strictly positive function $m : \bigcup_{-1 \leq k \leq n} X^{(k)} \rightarrow \mathbb{R}^+$.
The function $m$ will be called a balanced weight function if for every $-1 \leq k \leq n-1$ and every $\tau \in X^{(k)}$, we have the following equality
$$ \sum_{\sigma \in X^{(k+1)}, \tau \subset \sigma} m( \sigma ) =  m (\tau ) ,$$
where $\tau \subset \sigma$ means that $\tau$ is a face of $\sigma$. A simplicial complex $X$ with a balanced weight function $m$ will be called a weighted simplicial complex. 
\end{definition}

\begin{remark}
The idea of introducing a weight function in order to define the inner-product and combinatorial Laplacians appears already in the work of Ballmann and Swiatkowski \cite{BS}, where a the homogeneous weight function (see definition below) was used. The generalization that we use was first published by Horak and Jost in \cite{HJost}.
\end{remark}

Given a weight function $m$ we can define it on ordered simplices (denoting it again as $m$) as
$$m( (v_0,...,v_k)) = m( \lbrace v_0,...,v_k \rbrace), \forall  (v_0,...,v_k) \in \bigcup_{-1 \leq k \leq n} \Sigma (k).$$
If $m$ is balanced, we have the following equality:
$$\forall  \tau \in \bigcup_{-1 \leq k \leq n-1} \Sigma (k), \sum_{\sigma \in \Sigma (k+1), \tau \subset \sigma} m( \sigma ) = (k+2)!  m (\tau ) ,$$
where $\tau \subset \sigma$ means that all the vertices of $\tau$ are contained in $\sigma$ (with no regard to the ordering). We note that under this equality one can start with a strictly positive function $m: \bigcup_{-1 \leq k \leq n} \Sigma (k)  \rightarrow \mathbb{R}^+$ and get a balanced weight function $m: \bigcup_{-1 \leq k \leq n} X^{(k)} \rightarrow \mathbb{R}^+$:

\begin{proposition}
\label{weightedOrderDef}
Let $m: \bigcup_{-1 \leq k \leq n} \Sigma (k)  \rightarrow \mathbb{R}^+$ be a strictly positive function such that:
\begin{enumerate}
\item For every $1 \leq k \leq n$, and every permutation $\pi \in Sym (\lbrace 0,..,k \rbrace)$ we have 
$$m( (v_0,...,v_k) ) = m( (v_{\pi (0)},...,v_{\pi (k)} )), \forall (v_0,...,v_k) \in \Sigma (k).$$
\item 
$$\forall  \tau \in \bigcup_{-1 \leq k \leq n-1} \Sigma (k), \sum_{\sigma \in \Sigma (k+1), \tau \subset \sigma} m( \sigma ) = (k+2)!  m (\tau ) .$$
\end{enumerate}
Then $m : \bigcup_{-1 \leq k \leq n} X^{(k)} \rightarrow \mathbb{R}^+$ defined as 
$$ m( \lbrace v_0,...,v_k \rbrace) = m( (v_0,...,v_k)), \forall   \lbrace v_0,...,v_k \rbrace \in \bigcup_{-1 \leq k \leq n} X^{(k)},$$
is a balanced weight function.
\end{proposition}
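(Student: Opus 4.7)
The plan is to reduce the balanced condition on unordered simplices to the hypothesis (2) on ordered simplices by a straightforward orbit-counting argument. The symmetry hypothesis (1) will be used to ensure the descended function on unordered simplices is well-defined and that the $m$-value is constant on the $(k+2)!$ orderings of a given $(k+1)$-simplex.

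First I would verify that the proposed function on $\bigcup X^{(k)}$ is well-defined. Given an unordered simplex $\lbrace v_0,\ldots,v_k\rbrace$, two different orderings $(v_0,\ldots,v_k)$ and $(v_{\pi(0)},\ldots,v_{\pi(k)})$ belong to $\Sigma(k)$, and by hypothesis (1) they carry the same $m$-value (for $k=0$ there is nothing to check, and for $k=-1$ the set $\Sigma(-1)$ has a single element $\emptyset$). Hence the formula $m(\lbrace v_0,\ldots,v_k\rbrace):=m((v_0,\ldots,v_k))$ defines a strictly positive function on $\bigcup X^{(k)}$ unambiguously.

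Next I would establish the balanced identity. Fix $-1\leq k\leq n-1$ and an unordered simplex $\tau\in X^{(k)}$; choose any ordered representative $\widetilde{\tau}\in\Sigma(k)$ of $\tau$. The condition $\widetilde{\tau}\subset\sigma$ in the sum of hypothesis (2) refers only to the underlying vertex sets, so the set of $\sigma\in\Sigma(k+1)$ with $\widetilde{\tau}\subset\sigma$ partitions according to the underlying unordered $(k+1)$-simplex $\sigma'\supset\tau$: each such $\sigma'$ contributes exactly $(k+2)!$ orderings $\sigma$, each of which has $m(\sigma)=m(\sigma')$ by part (1). Hence
$$\sum_{\sigma\in\Sigma(k+1),\,\widetilde{\tau}\subset\sigma}m(\sigma)\;=\;(k+2)!\sum_{\sigma'\in X^{(k+1)},\,\tau\subset\sigma'}m(\sigma').$$
Applying hypothesis (2) to the left-hand side, the left-hand side equals $(k+2)!\,m(\widetilde{\tau})=(k+2)!\,m(\tau)$, so dividing by $(k+2)!$ yields the balanced weight equation $\sum_{\sigma'\supset\tau}m(\sigma')=m(\tau)$, as required.

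There is no real obstacle: the proof is entirely bookkeeping about the $(k+2)!$-to-$1$ map $\Sigma(k+1)\to X^{(k+1)}$. The only point that needs a moment of care is the boundary cases $k=-1$ and $k=0$, where some factorials degenerate to $1$; checking that the counts $0!=1$ and $1!=1$ correctly enumerate the orderings of $\emptyset$ and of a single vertex makes those cases work without modification.
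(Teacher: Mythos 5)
Your argument is correct and is simply the detailed unpacking of the bookkeeping that the paper dismisses with ``Follows from the definition'': the $(k+2)!$-to-$1$ correspondence between ordered and unordered $(k+1)$-simplices containing $\tau$, together with the symmetry hypothesis, immediately converts condition (2) into the balanced weight identity. Your attention to well-definedness and to the degenerate cases $k=-1,0$ is appropriate and nothing further is needed.
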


\begin{proof}
Follows from the definition.
\end{proof}

\begin{remark}
Below $m$ will always be considered balanced. Also, as defined above, every time we will say a simplicial complex is weighted, we will mean it has a balanced weight. The reason we introduce the general notion of weight is to compare to other works in which the weight function which is considered is not balanced (see also Remark \ref{weight system remark} below).
\end{remark}

\begin{remark}
From the definition of the balanced weight function $m$, it should be clear that every map  $m : X^{(n)} \rightarrow \mathbb{R}^+$ can be extended in a unique way to a balanced weight function $m: \bigcup_{-1 \leq k \leq n} X^{(k)} \rightarrow \mathbb{R}^+$. 
\end{remark}

\begin{definition}
$m$ is called the homogeneous weight on $X$ if for every $\sigma \in X^{(n)}$, we have $m( \sigma )=1$.
\end{definition}

\begin{proposition}
\label{weight in n dim simplices}
For every $-1 \leq k \leq n$ and every $\tau \in X^{(k)}$ we have that
$$\dfrac{1}{(n-k)!}  m (\tau) =\sum_{\sigma \in X^{(n)}, \tau \subseteq \sigma} m (\sigma ),$$
where $\tau \subseteq \sigma$ means that $\tau$ is a face of $\sigma$. 
\end{proposition}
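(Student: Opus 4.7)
The plan is to prove this by induction on the codimension $n-k$. The base case $k=n$ is immediate: $(n-k)!=1$ and the only $n$-simplex containing $\tau$ is $\tau$ itself, so both sides equal $m(\tau)$.

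For the inductive step, I fix $k<n$ and assume the identity for $k+1$. Given $\tau \in X^{(k)}$, the balanced weight condition gives
$$m(\tau) = \sum_{\eta \in X^{(k+1)},\, \tau \subset \eta} m(\eta).$$
Applying the inductive hypothesis to each such $\eta$ yields
$$m(\tau) = (n-k-1)! \sum_{\eta \in X^{(k+1)},\, \tau \subset \eta} \; \sum_{\sigma \in X^{(n)},\, \eta \subseteq \sigma} m(\sigma).$$
Then I swap the order of summation: for a fixed $\sigma \in X^{(n)}$ with $\tau \subseteq \sigma$, the intermediate $\eta$'s with $\tau \subset \eta \subseteq \sigma$ correspond bijectively to the choice of one additional vertex from $\sigma \setminus \tau$, of which there are $n-k$. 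Hence each $m(\sigma)$ appears with multiplicity $n-k$, giving
$$m(\tau) = (n-k-1)!\,(n-k) \sum_{\sigma \in X^{(n)},\, \tau \subseteq \sigma} m(\sigma) = (n-k)! \sum_{\sigma \in X^{(n)},\, \tau \subseteq \sigma} m(\sigma),$$
which, after dividing by $(n-k)!$, is the desired identity.

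There is really no obstacle here; the only subtle point worth highlighting is the counting of intermediate faces $\eta$ when swapping sums, which relies on the fact that $X$ is pure $n$-dimensional so that every $\eta \in X^{(k+1)}$ with $\tau \subset \eta \subseteq \sigma$ does belong to $X$ (it is automatically a face of $\sigma \in X$). The formula $(n-k-1)!(n-k) = (n-k)!$ then closes the induction.
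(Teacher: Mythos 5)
Your proof is correct and follows essentially the same route as the paper: induction on the codimension, expanding $m(\tau)$ via the balanced weight condition, applying the inductive hypothesis, and swapping the order of summation with the multiplicity count $n-k$ yielding $(n-k-1)!(n-k)=(n-k)!$. The paper's own argument is exactly this computation, presented slightly more tersely.
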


\begin{proof}
The proof is by induction. For $k=n$ this is obvious. Assume the equality is true for $k+1$, then for $\tau \in X^{(k)}$ we have
\begin{dmath*}
m( \tau ) = \sum_{\sigma \in X^{(k+1)}, \tau \subset \sigma} m(\sigma ) = \sum_{\sigma \in X^{(k+1)}, \tau \subset \sigma} (n-k-1)! \sum_{\eta \in X^{(n)}, \sigma \subset \eta} m(\eta) = (n-k) (n-k-1)! \sum_{\eta \in X^{(n)}, \tau \subset \eta} m(\eta)=   (n-k)! \sum_{\eta \in X^{(n)}, \tau \subset \eta} m(\eta) .
\end{dmath*}
\end{proof}

\begin{corollary}
\label{weight in l dim simplices}
For every $-1 \leq k < l \leq n$ and every $\tau \in X^{(k)}$ we have
$$\dfrac{1}{(l-k)!} m(\tau) = \sum_{\sigma \in X^{(l)}, \tau \subset \sigma} m(\sigma) .$$
\end{corollary}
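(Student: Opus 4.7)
The statement extends Proposition \ref{weight in n dim simplices} from the case $l=n$ to arbitrary $l$ with $k<l\le n$. The plan is to combine that proposition, applied at both ends of the chain $\tau \subset \sigma \subset \eta$ where $\tau\in X^{(k)}$, $\sigma\in X^{(l)}$, $\eta\in X^{(n)}$, with a straightforward double-counting step.

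First, for each $\sigma\in X^{(l)}$ containing $\tau$, apply Proposition \ref{weight in n dim simplices} to write
$$m(\sigma) = (n-l)! \sum_{\eta\in X^{(n)},\,\sigma\subseteq\eta} m(\eta).$$
Summing over $\sigma$ and swapping the order of summation,
$$\sum_{\sigma\in X^{(l)},\,\tau\subset\sigma} m(\sigma) \;=\; (n-l)! \sum_{\eta\in X^{(n)},\,\tau\subseteq\eta} m(\eta) \cdot \#\bigl\{\sigma\in X^{(l)} : \tau\subset\sigma\subseteq\eta\bigr\}.$$
The key combinatorial observation is that once $\eta\supseteq\tau$ is fixed, any $l$-simplex $\sigma$ with $\tau\subset\sigma\subseteq\eta$ is obtained by choosing the remaining $l-k$ vertices of $\sigma$ from the $n-k$ vertices of $\eta\setminus\tau$; there are thus exactly $\binom{n-k}{l-k}$ such $\sigma$, and each is automatically in $X^{(l)}$ because $X$ is a simplicial complex.

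Substituting this count and using Proposition \ref{weight in n dim simplices} once more (this time for $\tau$ itself) yields
$$\sum_{\sigma\in X^{(l)},\,\tau\subset\sigma} m(\sigma) = (n-l)!\,\binom{n-k}{l-k}\sum_{\eta\in X^{(n)},\,\tau\subseteq\eta} m(\eta) = \frac{(n-k)!}{(l-k)!}\cdot\frac{m(\tau)}{(n-k)!} = \frac{1}{(l-k)!}\,m(\tau),$$
as required. There is no real obstacle: the only thing to verify carefully is the binomial count of intermediate $\sigma$'s and the cancellation of $(n-l)!$ with $(n-k)!/(l-k)!(n-l)!$. An equally clean alternative would be a short induction on $l-k$, with base case $l=k+1$ being the defining balanced-weight identity and the inductive step applying the same identity inside the sum over $X^{(l)}$; I would use the double-counting form above since Proposition \ref{weight in n dim simplices} is already in hand.
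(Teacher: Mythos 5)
Your proof is correct and follows essentially the same route as the paper: express each $m(\sigma)$ as $(n-l)!\sum_{\eta\supseteq\sigma}m(\eta)$ via Proposition \ref{weight in n dim simplices}, swap the order of summation, and count the $\binom{n-k}{l-k}$ intermediate $l$-simplices between $\tau$ and each $\eta$ (the paper writes this binomial as $\frac{(n-k)!}{(l-k)!(n-k-(l-k))!}$). No issues.
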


\begin{proof}
For every $\sigma \in X^{(l)}$ we have
$$\dfrac{1}{(n-l)!} m(\sigma ) = \sum_{\eta \in X^{(n)}, \sigma \subseteq \eta} m(\eta) .$$
Therefore
\begin{dmath*}
\sum_{\sigma \in X^{(l)}, \tau \subset \sigma} m(\sigma) = \sum_{\sigma \in X^{(l)}, \tau \subset \sigma} (n-l)! \sum_{\eta \in X^{(n)}, \sigma \subseteq \eta} m(\eta) = \dfrac{(n-k)!}{(l-k)! (n-k - (l-k) )! } (n-l)! \sum_{\eta \in X^{(n)}, \tau \subseteq \eta} m(\eta) = \dfrac{(n-k)!}{(l-k)!}  \sum_{\eta \in X^{(n)}, \tau \subseteq \eta} m(\eta) = \dfrac{1}{(l-k)!} m (\tau ) .
\end{dmath*}
\end{proof}

From now on, we shall always assume that $X$ is weighted (and that the weight function is balanced). 

\begin{remark}
\label{weight system remark}
The reader should note that in other papers a different variants of the homogeneous weight are used: for instance, in our setting, the homogeneous weight is defined for $\tau \in X^{(k)}$ as 
$$m(\tau) =   (n-k)! \vert \lbrace \sigma \in X^{(n)}: \tau \subseteq \sigma \rbrace \vert.$$
In contrast, in \cite{LMM} the following weight function $w$ is used: for $\tau \in X^{(k)}$, 
$$w(\tau) = \dfrac{\vert \lbrace \sigma \in X^{(n)}: \tau \subseteq \sigma \rbrace \vert}{{n+1 \choose k+1} \vert X^{(n)} \vert}.$$
We note that $w$ is unbalanced, but has the nice property of being a probability measure of $X^{(k)}$ for every $k$. We chose to work only with balanced weights, because our computations below are easier under this assumption (for instance, the weighted Laplacian in every link has eigenvalues between $0$ and $2$). The reader should note that all our results regarding spectral gaps are transferable to the weight systems similar to that of \cite{LMM}, because the homogeneous weight differ from this other variants by a multiplicative constant (that depends on the dimension of the simplex). For instance, $\frac{m(\tau)}{w(\tau)} = \frac{(n+1)!}{(k+1)!} \vert X^{(n)} \vert$ for every $k$-dimensional simplex $\tau$.
\end{remark}

\subsection{Cochains and Laplacians in real coefficients}

For $-1 \leq k\leq n$, denote 
$$C^{k}(X, \mathbb{R}) = \lbrace \phi : \Sigma (k) \rightarrow \mathbb{R} : \phi \text{ is antisymmetric} \rbrace.$$
We recall that $\phi : \Sigma (k) \rightarrow \mathbb{R}$ is called antisymmetric, if for every $(v_0,...,v_k) \in  \Sigma (k)$ and every permutation $\pi \in Sym (\lbrace 0,...,k \rbrace)$, we have 
$$\phi ((v_{\pi (0)},...,v_{\pi (k)})) = sgn (\pi) \phi ((v_0,...,v_k)).$$
Every $\phi \in C^k (X,\mathbb{R})$ is called a \textit{$k$-form}, and $C^k (X, \mathbb{R})$ is called the \textit{space of $k$-forms}. 

For $-1 \leq k \leq n$ define an inner product on $C^{k}(X,\mathbb{R})$ as:
$$\forall \phi, \psi \in C^{k}(X,\mathbb{R}), \left\langle \phi , \psi \right\rangle = \sum_{\tau \in \Sigma (k)} \dfrac{m(\tau)}{(k+1)!} \phi (\tau ) \psi (\tau ).$$
Note that with this inner product $C^{k}(X,\mathbb{R})$ is a (finite dimensional) Hilbert space. Denote the norm induced by this inner product as $\Vert . \Vert$. 
For $-1 \leq k \leq n-1$ define the differential $d_k : C^k (X,\mathbb{R}) \rightarrow C^{k+1} (X,\mathbb{R})$ in the usual way, i.e., for every $\phi \in C^k (X,\mathbb{R})$ and every $(v_0,...,v_{k+1})$,
$$(d_k \phi ) ((v_0,...,v_{k+1} )) = \sum_{i=0}^{k+1} (-1)^i \phi ((v_0,..., \widehat{v_i},...,v_{k+1})).$$

One can easily check that for every $0 \leq k \leq n-1$ we have that $d_{k+1} d_k = 0$ and therefore we can define the cohomology in the usual way:
$$H^k (X, \mathbb{R} ) = \dfrac{\Ker (d_k)}{\im (d_{k-1})} .$$

Next, we describe the discrete Hodge theory in our setting. Define $\delta_k : C^{k+1} (X, \mathbb{R} ) \rightarrow C^{k} (X,\mathbb{R})$ as the adjoint operator of $d_{k}$ (with respect to the inner product we defined earlier on $C^{k} (X, \mathbb{R} ), C^{k-1} (X,\mathbb{R})$). Define further operators $\Delta_k^+, \Delta_k^-, \Delta_k :  C^{k} (X, \mathbb{R} ) \rightarrow C^{k} (X, \mathbb{R} )$ as 
$$\Delta_k^+ = \delta_{k} d_k, \Delta_k^- = d_{k-1} \delta_{k-1}, \Delta_k = \Delta_k^+ + \Delta_k^- .$$
The operators $\Delta_k^+, \Delta_k^-, \Delta_k$ are called the upper Laplacian, the lower Laplacian and the full Laplacian. The reader should note that by definition, all these operators are positive (i.e., self-adjoint with a non-negative spectrum). 
\begin{proposition}
\label{HodgeConsid}
For every $1 \leq k \leq n-1$, $H^k (X, \mathbb{R} )$ is isometrically isomorphic as a Hilbert space to $\Ker (\Delta_k )$ and
$$Spec (\Delta^+_{k-1}) \setminus \lbrace 0 \rbrace \subseteq [a,b] \Leftrightarrow Spec (\Delta^-_k) \setminus \lbrace 0 \rbrace \subseteq [a,b],$$
where $Spec (\Delta^+_{k-1}), Spec (\Delta^-_k) $ are the spectrum of $\Delta^+_{k-1}, \Delta^-_k$. 
\end{proposition}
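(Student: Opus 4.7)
The plan is to establish discrete Hodge decomposition for the weighted complex and then exploit the standard fact that $A^{*}A$ and $AA^{*}$ share their non-zero spectra, applied to $A = d_{k-1}$.

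First I would verify the Hodge decomposition
\[
C^{k}(X,\mathbb{R}) = \Ker(\Delta_{k}) \oplus \im(d_{k-1}) \oplus \im(\delta_{k}),
\]
with the sum orthogonal. The orthogonality $\im(d_{k-1}) \perp \im(\delta_{k})$ is immediate from $d_{k}d_{k-1}=0$ and the definition of $\delta_{k}$ as adjoint of $d_{k}$. In a finite dimensional Hilbert space, for any linear map $T$ one has $C^{k} = \im(T^{*}) \oplus \Ker(T)$ orthogonally, so applying this to $d_{k-1}$ gives $C^{k} = \im(d_{k-1}) \oplus \Ker(\delta_{k-1})$, and applying it to $\delta_{k}$ gives $C^{k}=\im(\delta_{k})\oplus \Ker(d_{k})$. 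Combining these two with the inclusions $\im(d_{k-1})\subseteq\Ker(d_{k})$ and $\im(\delta_{k})\subseteq\Ker(\delta_{k-1})$ then isolates a common orthogonal complement, which I would identify as $\Ker(d_{k})\cap\Ker(\delta_{k-1})$.

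Next I would show $\Ker(\Delta_{k}) = \Ker(d_{k})\cap\Ker(\delta_{k-1})$. One inclusion is obvious. For the other, if $\Delta_{k}\phi=0$, then
\[
0 = \langle \Delta_{k}\phi,\phi\rangle = \langle d_{k}\phi,d_{k}\phi\rangle + \langle \delta_{k-1}\phi,\delta_{k-1}\phi\rangle,
\]
forcing $d_{k}\phi=0$ and $\delta_{k-1}\phi=0$. Combined with the decomposition above this gives $\Ker(d_{k}) = \Ker(\Delta_{k}) \oplus \im(d_{k-1})$ as an orthogonal sum, so the orthogonal projection $\Ker(d_{k})\to\Ker(\Delta_{k})$ descends to an isometric isomorphism $H^{k}(X,\mathbb{R}) = \Ker(d_{k})/\im(d_{k-1}) \cong \Ker(\Delta_{k})$.

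For the spectral statement, I would apply the general fact that for any linear operator $A:V\to W$ between finite dimensional Hilbert spaces, $A^{*}A$ and $AA^{*}$ have the same non-zero spectrum (with the same multiplicities). Indeed $A$ restricts to an isomorphism $(\Ker A)^{\perp}\to \im(A)$, and $A\circ (A^{*}A) = (AA^{*})\circ A$ shows that this restriction intertwines $A^{*}A|_{(\Ker A)^{\perp}}$ with $AA^{*}|_{\im(A)}$. Specializing to $A=d_{k-1}:C^{k-1}\to C^{k}$, we get that $\Delta_{k-1}^{+}=\delta_{k-1}d_{k-1}$ and $\Delta_{k}^{-}=d_{k-1}\delta_{k-1}$ have the same non-zero spectrum, which gives the equivalence of spectral containments $\operatorname{Spec}(\Delta_{k-1}^{+})\setminus\{0\}\subseteq[a,b]$ and $\operatorname{Spec}(\Delta_{k}^{-})\setminus\{0\}\subseteq[a,b]$.

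No serious obstacle is expected: the argument is the standard finite dimensional Hodge theory applied to the weighted inner product, and the choice of (balanced) weight plays no role beyond ensuring that the inner product is positive definite and that $\delta_{k}$ is well-defined. The only small care needed is to track that the orthogonal projection in step one is genuinely an isometry on the quotient, which follows because any two representatives in $\Ker(d_{k})$ of the same cohomology class differ by an element of $\im(d_{k-1})$, and this element is orthogonal to $\Ker(\Delta_{k})$.
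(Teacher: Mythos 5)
Your proof is correct and follows essentially the same route as the paper: both derive the orthogonal decomposition $\Ker(d_k)=\Ker(\Delta_k)\oplus\im(d_{k-1})$ from the adjointness $d_k^*=\delta_k$ and identify $H^k(X,\mathbb{R})$ with the harmonic forms via orthogonal projection, and both settle the spectral claim by observing that $\Delta_{k-1}^+=\delta_{k-1}d_{k-1}$ and $\Delta_k^-=d_{k-1}\delta_{k-1}$ are of the form $A^*A$ and $AA^*$ and hence share their non-zero spectrum. Your write-up is somewhat more explicit (full three-term Hodge decomposition and the intertwining argument), but there is no substantive difference.
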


\begin{proof}
Notice that since $d_k^* = \delta_k$ we have the following:
$$\im (\Delta_k^+)=  (\Ker ( \Delta_k^+))^\perp =(\Ker (d_k))^\perp = \im (\delta_k),$$
$$\im (\Delta_k^-)=  (\Ker ( \Delta_k^-))^\perp =(\Ker (\delta_{k-1}))^\perp = \im (d_{k-1}).$$
Therefore, we have an orthogonal decomposition
$$\Ker (d_k) = \Ker ( \Delta_k^+) = \left( \Ker ( \Delta_k^+) \cap \Ker ( \Delta_k^-) \right) \oplus \im (\Delta_k^-) =\Ker (\Delta_k ) \oplus \im (d_{k-1}).$$
Which yields that the orthogonal projection of $\Ker (d_k)$ on $\Ker (\Delta_k )$ gives rise to an isometric isomorphism between $H^k (X, \mathbb{R} )$ and $\Ker (\Delta_k ) $. $\Ker (\Delta_k )$ is called the space of harmonic $k$-forms on $X$. Also notice that due to the fact that $\Delta^+_{k-1} = \delta_{k-1} d_{k-1}, \Delta^-_{k} =  d_{k-1} \delta_{k-1}$, we have
$$Spec (\Delta^+_{k-1}) \setminus \lbrace 0 \rbrace \subseteq [a,b] \Leftrightarrow Spec (\Delta^-_k) \setminus \lbrace 0 \rbrace \subseteq [a,b],$$
\end{proof}
The next proposition gives an explicit formula for $\delta_k, \Delta_k^+, \Delta_k^-$:

\begin{proposition}

\begin{enumerate}
\item Let $-1 \leq k \leq n-1$ then: $\delta_k :C^{k+1}(X,\mathbb{R})\rightarrow C^{k}(X,\mathbb{R})$ is \[
\delta_k \phi(\tau)=\sum_{\begin{array}{c}
{\scriptstyle v \in\Sigma(0)}\\
{\scriptstyle v \tau\in \Sigma (k+1)}\end{array}}\frac{m(v\tau)}{m(\tau)}\phi(v\tau),\;\tau\in\Sigma(k)\]
where $v\tau=(v,v_{0},...,v_{k})$ for $\tau=(v_{0},...,v_{k})$. 
\item For $0 \leq k \leq n-1$, $\phi\in C^{k}(X,\mathbb{R})$ and $\sigma\in\Sigma(k)$,
\[
\Delta_k^+ \phi (\sigma) = \phi(\sigma)-\sum_{\begin{array}{c}
{\scriptstyle v \in\Sigma(0)}\\
{\scriptstyle v \sigma \in \Sigma (k+1)}\end{array} }\sum_{0\leq i\leq k}(-1)^{i}\frac{m(v\sigma)}{m(\sigma)}\phi(v\sigma_{i}) ,\]
where $\sigma_i = (v_0,..., \widehat{v_i},...,v_{k})$ for $\sigma =(v_{0},...,v_{k})$.
\item For  $0 \leq k \leq n$, $\phi\in C^{k}(X,\mathbb{R})$ and $\sigma\in\Sigma(k)$,
$$ \Delta_k^- \phi (\sigma ) = \sum_{i=0}^k (-1)^i  \sum_{ v \in\Sigma(0),  v\sigma_i \in\Sigma(k) }\frac{m(v\sigma_i)}{m(\sigma_i)}\phi(v\sigma_i) ,$$
where $\sigma_i = (v_0,..., \widehat{v_i},...,v_{k})$ for $\sigma =(v_{0},...,v_{k})$. 
\end{enumerate}
\end{proposition}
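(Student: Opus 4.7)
The three parts are hierarchical: once part (1) is verified, parts (2) and (3) follow by explicit composition, so the real content lies in showing that the claimed $\delta_k$ is indeed the adjoint of $d_k$.

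\textbf{Part (1): the adjoint formula.} The plan is to compute $\langle d_k\phi,\psi\rangle$ directly from the definition and rearrange the double sum so that it takes the form $\langle\phi,\delta_k\psi\rangle$ with $\delta_k$ as claimed. Expanding gives
\[
\langle d_k\phi,\psi\rangle = \sum_{\sigma\in\Sigma(k+1)}\frac{m(\sigma)}{(k+2)!}\sum_{i=0}^{k+1}(-1)^i\phi(\sigma_i)\psi(\sigma).
\]
The key manipulation is to absorb the sign $(-1)^i$ using antisymmetry of $\psi$: if $\sigma=(v_0,\dots,v_{k+1})$ and $v=v_i$, then moving $v$ from position $i$ back to position $0$ is a composition of $i$ transpositions, so $\psi(\sigma)=(-1)^i\psi(v\sigma_i)$, which cancels the sign. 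Next I would reindex the sum by the pair $(v,\tau)=(v_i,\sigma_i)$: each pair $(v,\tau)$ with $v\tau\in\Sigma(k+1)$ arises from exactly $k+2$ pairs $(\sigma,i)$ (one for each position at which $v$ can be inserted into $\tau$), and $m(\sigma)$ depends only on the underlying unordered simplex, hence equals $m(v\tau)$ in all these $k+2$ cases. The factor $k+2$ converts $\frac{1}{(k+2)!}$ into $\frac{1}{(k+1)!}$, giving
\[
\langle d_k\phi,\psi\rangle = \sum_{\tau\in\Sigma(k)}\frac{m(\tau)}{(k+1)!}\phi(\tau)\sum_{v:v\tau\in\Sigma(k+1)}\frac{m(v\tau)}{m(\tau)}\psi(v\tau),
\]
which is exactly $\langle\phi,\delta_k\psi\rangle$ with the stated formula.

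\textbf{Part (2): the upper Laplacian.} Apply $\delta_k$ to $d_k\phi$ using the formula from part (1). For $\sigma\in\Sigma(k)$,
\[
\Delta_k^+\phi(\sigma)=\sum_{v:v\sigma\in\Sigma(k+1)}\frac{m(v\sigma)}{m(\sigma)}(d_k\phi)(v\sigma),
\]
and expanding $(d_k\phi)(v\sigma)$ by the definition of $d_k$, the $j=0$ term is $\phi(\sigma)$ while the remaining $j=1,\dots,k+1$ terms reindex (via $i=j-1$) to $-\sum_{i=0}^k(-1)^i\phi(v\sigma_i)$. To finish, I would invoke the balance condition: summing $m(v\sigma)$ over all valid $v$ equals $m(\sigma)$ (this follows from the unordered balance condition in Definition \ref{weightedDef}, since each unordered $(k+1)$-simplex above the unordered support of $\sigma$ is hit by exactly one $v$), so $\sum_v\frac{m(v\sigma)}{m(\sigma)}=1$ and the $\phi(\sigma)$ term contributes exactly $\phi(\sigma)$.

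\textbf{Part (3): the lower Laplacian.} This is almost immediate: apply $d_{k-1}$ to $\delta_{k-1}\phi$, i.e., $\Delta_k^-\phi(\sigma)=\sum_{i=0}^k(-1)^i(\delta_{k-1}\phi)(\sigma_i)$, and substitute the formula for $\delta_{k-1}$ from part (1).

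The only genuine obstacle is the bookkeeping in part (1): correctly tracking the sign produced by moving $v$ into position and correctly counting how many ordered $(k+1)$-simplices $\sigma$ give rise to the same reindexed pair $(v,\tau)$. Both are standard but must be handled carefully; everything else is mechanical substitution and one appeal to the balance condition.
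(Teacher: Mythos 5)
Your proposal is correct and follows essentially the same route as the paper: compute $\langle d_k\phi,\psi\rangle$, absorb the sign $(-1)^i$ via antisymmetry, reindex and count multiplicities to identify the adjoint, then obtain parts (2) and (3) by composition together with the balance condition $\sum_v m(v\sigma)=m(\sigma)$. The only (cosmetic) difference is that the paper organizes the reindexing in part (1) through incidence coefficients $[\sigma:\tau]$ and a sum over all ordered faces with a $1/(k+1)!$ normalization, whereas you count the $k+2$ insertion positions of $v$ into $\tau$ directly; both are the same bookkeeping.
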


\begin{proof}
\begin{enumerate}

\item For $\sigma \in \Sigma (k+1)$ and $\tau \subset \sigma, \tau \in \Sigma (k)$ denote by $[\sigma : \tau ]$ the incidence coefficient of $\tau$ with respect to $\sigma$, i.e., if for $\sigma = (v_{0},...,v_{k+1})$, $\tau =  \lbrace v_0,..., \widehat{v_i},...,v_{k+1} \rbrace$, then for every $\psi \in C^k (X, \mathbb{R})$, we have that $[ \sigma : \tau ] \psi (\tau ) = (-1)^i \psi (\sigma_i)$. Take $\phi \in C^{k+1} (X, \mathbb{R})$ and $\psi \in C^k (X, \mathbb{R})$, then we have
\begin{dmath*}
\left\langle d \psi , \phi \right\rangle = \sum_{\sigma \in \Sigma (k+1)} \dfrac{m(\sigma )}{(k+2)!} \left( \sum_{i=0}^{k+1}(-1)^{i}\psi(\sigma_{i}) \right) \phi ( \sigma )  = 
{ \sum_{\sigma \in \Sigma (k+1)} \dfrac{m(\sigma )}{(k+1)! (k+2)! } \left( \sum_{ \tau \in\Sigma(k), \tau \subset \sigma} [\sigma : \tau] \psi(\tau) \right)  \phi ( \sigma )}  = 
 \sum_{\sigma \in \Sigma (k+1)} \dfrac{m(\tau )}{(k+1)!} \sum_{ \tau \in\Sigma(k) , \tau \subset \sigma}    \psi(\tau) \left( \dfrac{[\sigma : \tau] m(\sigma )}{m(\tau ) (k+2)!} \phi ( \sigma ) \right) = 
\sum_{\tau \in \Sigma (k)} \dfrac{m(\tau )}{(k+1)!} \sum_{ \sigma \in\Sigma(k+1), \tau \subset \sigma }   \psi(\tau)\left( \dfrac{[\sigma : \tau] m(\sigma )}{m(\tau ) (k+2)!} \phi ( \sigma ) \right) = 
\sum_{\tau \in \Sigma (k)} \dfrac{m(\tau )}{(k+1)! }    \psi(\tau) \left( \sum_{ \sigma \in\Sigma(k+1), \tau \subset \sigma} \dfrac{[\sigma : \tau] m(\sigma )}{m(\tau ) (k+2)!} \phi ( \sigma ) \right) = 
\sum_{\tau \in \Sigma (k)} \dfrac{m(\tau )}{(k+1)! }   \psi(\tau) \left( \sum_{ v \in\Sigma(0),  v\tau\in\Sigma(k+1) }\frac{m(v\tau)}{m(\tau)}\phi(v\tau) \right) =
 \left\langle \psi, \sum_{ v \in\Sigma(0),  v\tau\in\Sigma(k+1) }\frac{m(v\tau)}{m(\tau)}\phi(v\tau) \right\rangle .
 \end{dmath*}

\item For every $\phi \in C^k (X, \mathbb{R})$ and every $\sigma \in \Sigma (k)$ we have:
\begin{dmath*}
\delta d \phi (\sigma ) = \sum_{\begin{array}{c}
{\scriptstyle v \in\Sigma(0)}\\
{\scriptstyle v\sigma \in\Sigma(k+1) }\end{array} }\frac{m(v\sigma)}{m(\sigma)} d \phi(v\sigma) = 
\sum_{\begin{array}{c}
{\scriptstyle v \in\Sigma(0)}\\
{\scriptstyle v\sigma \in\Sigma(k+1) }\end{array}}\frac{m(v\sigma)}{m(\sigma)} \phi (\sigma ) - \sum_{\begin{array}{c}
{\scriptstyle v \in\Sigma(0)}\\
{\scriptstyle v\sigma \in\Sigma(k+1) }\end{array} }\sum_{0\leq i\leq k}(-1)^{i}\frac{m(v\sigma)}{m(\sigma)}\phi(v\sigma_{i}) =
\sum_{\begin{array}{c}
{\scriptstyle \gamma \in\Sigma(k+1)}\\
{\scriptstyle \sigma \subset \gamma }\end{array}}\frac{m(\gamma)}{(k+2)! m(\sigma)} \phi (\sigma ) - \sum_{\begin{array}{c}
{\scriptstyle v \in\Sigma(0)}\\
{\scriptstyle v\sigma \in\Sigma(k+1) }\end{array}}\sum_{0\leq i\leq k}(-1)^{i}\frac{m(v\sigma)}{m(\sigma)}\phi(v\sigma_{i}) =
\phi (\sigma ) - \sum_{\begin{array}{c}
{\scriptstyle v \in\Sigma(0)}\\
{\scriptstyle v\sigma \in\Sigma(k+1) }\end{array}}\sum_{0\leq i\leq k}(-1)^{i}\frac{m(v\sigma)}{m(\sigma)}\phi(v\sigma_{i}).
\end{dmath*}
\item For every $\phi \in C^k (X, \mathbb{R})$ and every $\sigma \in \Sigma (k)$ we have: 
$$d \delta \phi (\sigma) = \sum_{i=0}^k (-1)^i \delta \phi (\sigma_i) = \sum_{i=0}^k (-1)^i  \sum_{ v \in\Sigma(0),  v\sigma_i \in\Sigma(k) }\frac{m(v\sigma_i)}{m(\sigma_i)}\phi(v\sigma_i) .$$

\end{enumerate}
\end{proof}

Note that by the above proposition, we have for every $\phi \in  C^{0} (X, \mathbb{R} )$ that
$$\delta_{-1} \phi (\emptyset ) = \sum_{v \in \Sigma (0)} \dfrac{m(v)}{m (\emptyset )} \phi (v),$$
and
$$\forall u \in \Sigma (0), \Delta_0^- \phi (u) = \delta_0 \phi (\emptyset).$$ 

\begin{proposition}
\label{Delta0Norm}
For every $\phi \in  C^{0} (X, \mathbb{R} )$, $\left\langle \Delta_0^- \phi , \phi \right\rangle = \Vert \delta_{-1} \phi \Vert^2 =  \Vert \Delta_0^- \phi \Vert^2$.
\end{proposition}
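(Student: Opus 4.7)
The plan is to unwind the definitions and then apply the adjoint relation together with the balanced weight identity $\sum_{v \in X^{(0)}} m(v) = m(\emptyset)$.

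First I would handle the identity $\langle \Delta_0^- \phi, \phi \rangle = \|\delta_{-1}\phi\|^2$. Since $\Delta_0^- = d_{-1}\delta_{-1}$, the adjoint relation between $d_{-1}$ and $\delta_{-1}$ gives
\[
\langle \Delta_0^- \phi, \phi \rangle \;=\; \langle d_{-1}\delta_{-1}\phi, \phi \rangle \;=\; \langle \delta_{-1}\phi, \delta_{-1}\phi \rangle \;=\; \|\delta_{-1}\phi\|^2,
\]
which settles the first equality with no work beyond invoking that $\delta_{-1}$ is the adjoint of $d_{-1}$.

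For the second equality $\|\delta_{-1}\phi\|^2 = \|\Delta_0^-\phi\|^2$ the key observation is that $C^{-1}(X,\mathbb{R})$ is one-dimensional, so $\delta_{-1}\phi$ is determined by the single scalar $c := \delta_{-1}\phi(\emptyset)$. Applying the definition of $d_{-1}$ shows $(d_{-1}\delta_{-1}\phi)(u) = c$ for every $u \in \Sigma(0)$; that is, $\Delta_0^-\phi$ is the constant function equal to $c$ (this is what the displayed formula just before the proposition records). Then using the inner product formula, $\|\delta_{-1}\phi\|^2 = \frac{m(\emptyset)}{0!}c^2 = m(\emptyset)\,c^2$, while
\[
\|\Delta_0^-\phi\|^2 \;=\; \sum_{u \in \Sigma(0)} \frac{m(u)}{1!}\,c^2 \;=\; c^2 \sum_{u \in \Sigma(0)} m(u).
\]

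The final step, and the only nontrivial one, is to verify $\sum_{u \in \Sigma(0)} m(u) = m(\emptyset)$. This is exactly the balanced weight relation from Definition \ref{weightedDef} applied at level $k=-1$ with $\tau = \emptyset$, noting that each $0$-simplex contributes a single ordered tuple so the ordered and unordered sums coincide. Combining this with the previous display yields $\|\Delta_0^-\phi\|^2 = m(\emptyset)c^2 = \|\delta_{-1}\phi\|^2$, completing the proof. There is no real obstacle here; the only thing to watch is the bookkeeping between ordered simplices, unordered simplices, and the factorial factors in the inner product.
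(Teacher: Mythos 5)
Your proof is correct and follows essentially the same route as the paper: both arguments reduce everything to the scalar $c=\delta_{-1}\phi(\emptyset)$, observe that $\Delta_0^-\phi$ is the constant function with value $c$, and convert $m(\emptyset)c^2$ into $\sum_{u}m(u)c^2$ via the balanced-weight identity at level $k=-1$. The only (harmless) difference is that you obtain the first equality abstractly from $\Delta_0^-=d_{-1}\delta_{-1}$ and adjointness, where the paper expands the inner product and factors the resulting double sum by hand.
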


\begin{proof}
For every $\phi \in  C^{0} (X, \mathbb{R} )$ we have
\begin{dmath*}
\left\langle \Delta_0^- \phi , \phi \right\rangle = \sum_{u \in \Sigma (0)} m(u) \left( \sum_{v \in \Sigma (0)} \dfrac{m(v)}{m (\emptyset )} \phi (v) \right) \phi (u)  =  \left( \sum_{v \in \Sigma (0)} \dfrac{m(v)}{m (\emptyset )} \phi (v) \right) \sum_{u \in \Sigma (0)} m(u) \phi (u) = m( \emptyset ) \left( \sum_{v \in \Sigma (0)} \dfrac{m(v)}{m (\emptyset )} \phi (v) \right)^2 = \Vert \delta_{-1} \phi \Vert^2.
\end{dmath*}
Also note that 
$$ \Vert \delta_{-1} \phi \Vert^2 = m( \emptyset ) \left( \sum_{v \in \Sigma (0)} \dfrac{m(v)}{m (\emptyset )} \phi (v) \right)^2  = \sum_{u \in \Sigma (0)} m(u)  \left( \sum_{v \in \Sigma (0)} \dfrac{m(v)}{m (\emptyset )} \phi (v) \right)^2 = \Vert \Delta_0^- \phi \Vert^2.$$
\end{proof}

\begin{proposition}
\label{ProjOnConstProp}
For every $\phi \in  C^{0} (X, \mathbb{R} )$, $\Delta_0^- \phi$ is the projection of $\phi$ on the space of constant $0$-forms.
\end{proposition}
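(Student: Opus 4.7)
The plan is to combine two observations already made (or easily derivable from) the previous proposition: first, that $\Delta_0^- \phi$ is a constant $0$-form, and second, that the constant it equals is precisely the weighted average of $\phi$, which is the formula for the orthogonal projection onto constants in the weighted Hilbert space $C^0(X,\mathbb{R})$.

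First, I would unpack the formula for $\Delta_0^-$ at dimension $0$. From the remark immediately preceding this proposition, for every $u \in \Sigma(0)$,
\[
\Delta_0^- \phi (u) = \delta_{-1}\phi(\emptyset) = \sum_{v \in \Sigma(0)} \frac{m(v)}{m(\emptyset)} \phi(v).
\]
In particular the right-hand side does not depend on $u$, so $\Delta_0^- \phi$ lies in the one-dimensional subspace $V_c \subset C^0(X,\mathbb{R})$ of constant $0$-forms.

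Next, I would identify the orthogonal projection onto $V_c$ explicitly. Since the balanced weight condition applied to $\tau = \emptyset$ gives $\sum_{v \in \Sigma(0)} m(v) = m(\emptyset)$, the constant $0$-form $\psi \equiv c$ has squared norm $m(\emptyset)\,c^2$, and for any $\phi \in C^0(X,\mathbb{R})$,
\[
\langle \phi, \psi \rangle = c \sum_{v \in \Sigma(0)} m(v)\,\phi(v).
\]
The orthogonal projection of $\phi$ onto $V_c$ is therefore the constant function with value $\frac{1}{m(\emptyset)}\sum_{v} m(v)\,\phi(v)$, which coincides exactly with the expression for $\Delta_0^- \phi(u)$ computed above.

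There is really no obstacle here; the statement is essentially a repackaging of the preceding formulae together with the $k=-1$ case of the balanced weight identity. If one preferred a more structural argument, one could instead observe that $\Delta_0^-$ is self-adjoint (as a general Laplacian $\Delta_k^-$), that its image is contained in $V_c$ by the computation above, and that it acts as the identity on $V_c$ (plugging a constant into the formula for $\Delta_0^-\phi$ returns the same constant, again using $\sum_v m(v) = m(\emptyset)$); these three properties uniquely characterise the orthogonal projection onto $V_c$.
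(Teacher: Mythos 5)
Your proof is correct and follows essentially the same route as the paper: identify $\Delta_0^-\phi$ as the constant function with value $\sum_v \frac{m(v)}{m(\emptyset)}\phi(v)$, and verify via $\Vert \mathbf{1}\Vert^2 = \sum_v m(v) = m(\emptyset)$ that this is exactly the orthogonal projection onto the constants. The alternative structural argument you sketch at the end is also valid but unnecessary.
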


\begin{proof}
Let $\textbf{1} \in C^0 (X, \mathbb{R})$ be the constant function $\textbf{1} (u) = 0, \forall u \in \Sigma (0)$. Then the projection of $\phi$ on the space of constant $0$-forms is given by $\frac{\left\langle \phi , \textbf{1} \right\rangle}{\Vert \textbf{1} \Vert^2} \textbf{1}$. Note that
$$ \Vert \textbf{1} \Vert^2 = \sum_{v \in \Sigma (0)} m(v) = m (\emptyset ),$$
$$ \left\langle \phi , \textbf{1} \right\rangle = \sum_{v \in \Sigma (0)} m(v) \phi (v).$$
Therefore for every $u \in \Sigma (0)$,
$$ \dfrac{\left\langle \phi , \textbf{1} \right\rangle}{\Vert \textbf{1} \Vert^2} \textbf{1} (u) = \sum_{v \in \Sigma (0)} \dfrac{m(v)}{m (\emptyset )} \phi (v) = \Delta_0^- (u).$$
\end{proof}

\begin{remark}
We remark that for $\Delta_0^+$ one always have $\Vert \Delta_0^+ \Vert \leq 2$, where $\Vert . \Vert$ here denotes the operator norm (we leave this calculation to the reader). We also remark that the largest eigenvalue of $\Delta_0^+$ is always larger than $1$. This can be seen easily when thinking about  $\Delta_0^+$ in matrix form: it is a matrix with $1$ along the diagonal and $0$ as an eigenvalue. Since the trace of $\Delta_0^+$ as a matrix is equal to the sum of eigenvalues, we can deduce it must have at least one eigenvalue strictly larger than $1$.  
\end{remark}

From now on, when there is no chance of confusion, we will omit the index of $d_k, \delta_k, \Delta^+_k, \Delta^-_k, \Delta_k $ and just refer to them as $d, \delta, \Delta^+, \Delta^-, \Delta $.

\subsection{Partite simplicial complexes}

In important source of examples of simplicial complexes which act like bipartite expander graphs comes from $(n+1)$-partite $n$-dimensional simplicial complexes:
\begin{definition}
An $n$-dimensional simplicial complex $X$ will be called $(n+1)$-partite, if there is a disjoint partition $X^{(0)} = S_0 \cup ... \cup  S_n$ such that for every $u, v \in X^{(0)}$, 
$$\lbrace u ,v \rbrace \in X^{(1)} \Rightarrow \exists \, 0 \leq i, j \leq n, i \neq j, u \in S_i, v \in S_j.$$
If $X$ is pure $n$-dimensional, the above condition is equivalent to the following condition:
$$ \lbrace u_0,...,u_n \rbrace \in X^{(n)} \Rightarrow \exists \, \pi \in Sym (\lbrace 0,...,n \rbrace ), \forall 0 \leq i \leq n, u_i \in  S_{\pi (i)} .$$
We shall call $S_0,...,S_n$ the sides of $X$.
\end{definition}

Let $X$ be a pure $n$-dimensional, weighted, $(n+1)$-partite simplicial complex with sides $S_0,...,   S_n$ as in the above definition. We shall define the following operators: \\
For $0 \leq j \leq n$ and $-1 \leq k \leq n-1$, define
$$d_{(k,j)} : C^k (X, \mathbb{R}) \rightarrow   C^{k+1} (X, \mathbb{R}) ,$$
as following:
$$d_{(k,j)} \phi ((v_0,...,v_{k+1})) = \begin{cases}
0 &  v_0 \notin S_j,...,v_{k+1} \notin S_j \\
(-1)^i \phi ((v_0,...,\widehat{v_i},...,v_{k+1})) & v_i \in S_j
\end{cases} .$$
Denote by $\delta_{(k,j)} :  C^{k+1} (X, \mathbb{R}) \rightarrow   C^{k} (X, \mathbb{R})$ the adjoint operator to $d_{(k,j)}$ and $\Delta^{-}_{(k,j)} = d_{(k-1,j)} \delta_{(k-1,j)}$.

\begin{proposition}
Let $-1 \leq k \leq n, 0 \leq j \leq n$, then for every $\phi \in C^{k+1} (X, \mathbb{R})$ 
$$\delta_{(k,j)} \phi (\tau) = \sum_{v \in S_j, v \tau \in \Sigma (k+1)} \dfrac{m(v \tau)}{m(\tau)} \phi (v \tau).$$
\end{proposition}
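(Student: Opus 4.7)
The plan is to verify the adjoint identity $\langle d_{(k,j)} \psi, \phi \rangle = \langle \psi, \Psi \rangle$ for all $\psi \in C^k (X, \mathbb{R})$ and $\phi \in C^{k+1} (X, \mathbb{R})$, where $\Psi(\tau) := \sum_{v \in S_j,\, v\tau \in \Sigma(k+1)} \frac{m(v\tau)}{m(\tau)} \phi(v\tau)$ is the candidate formula. The derivation mirrors the proof of the formula for $\delta_k$ given earlier in the paper, but is simpler because of the partite hypothesis.

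First I would expand $\langle d_{(k,j)} \psi, \phi\rangle = \sum_{\sigma \in \Sigma(k+1)} \frac{m(\sigma)}{(k+2)!} d_{(k,j)}\psi(\sigma)\, \phi(\sigma)$ using the definition of the inner product on $C^{k+1}(X,\mathbb{R})$. By definition of $d_{(k,j)}$, the summand vanishes unless $\sigma = (v_0, \ldots, v_{k+1})$ contains a vertex of $S_j$; by $(n+1)$-partiteness there is then a \emph{unique} index $i$ with $v_i \in S_j$, and for such $\sigma$ one has $d_{(k,j)}\psi(\sigma) = (-1)^i \psi(\sigma_i)$. Set $v := v_i$ and $\tau := \sigma_i$. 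Cycling $v$ from position $i$ to position $0$ in $\sigma$ is the composition of $i$ adjacent transpositions, so antisymmetry of $\phi$ gives $\phi(v\tau) = (-1)^i \phi(\sigma)$. The two signs cancel, leaving $d_{(k,j)}\psi(\sigma)\, \phi(\sigma) = \psi(\tau)\, \phi(v\tau)$.

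Next I would reindex the sum by the pair $(v, \tau)$ with $v \in S_j$, $\tau \in \Sigma(k)$, and $v\tau \in \Sigma(k+1)$. For each such pair there are exactly $k+2$ ordered simplices $\sigma \in \Sigma(k+1)$ mapping to $(v, \tau)$ under $\sigma \mapsto (v_{i}, \sigma_{i})$, one for each position $i \in \{0, \ldots, k+1\}$ at which $v$ can be inserted into $\tau$; each such $\sigma$ has the same underlying set of vertices and hence the same weight $m(v\tau)$. Consequently the factor $k+2$ combines with $1/(k+2)!$ to yield
$$\langle d_{(k,j)}\psi, \phi\rangle = \sum_{\tau \in \Sigma(k)} \sum_{\substack{v \in S_j \\ v\tau \in \Sigma(k+1)}} \frac{m(v\tau)}{(k+1)!} \psi(\tau)\, \phi(v\tau) = \sum_{\tau \in \Sigma(k)} \frac{m(\tau)}{(k+1)!} \psi(\tau)\, \Psi(\tau) = \langle \psi, \Psi \rangle,$$
and $\delta_{(k,j)} \phi = \Psi$ by uniqueness of adjoints.

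The only bookkeeping point is the reindexing step, and this is precisely where the partite hypothesis does the work: without it one would have to account separately for simplices containing multiple $S_j$-vertices, whereas here the index $i$ of the unique $S_j$-vertex is well defined, so no combinatorial factor appears beyond the $k+2$ insertion multiplicity. I therefore expect no real obstacle beyond careful tracking of the sign and multiplicity.
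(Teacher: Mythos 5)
Your proof is correct and follows essentially the same route as the paper's: expand the inner product, use partiteness to identify the unique $S_j$-vertex, cancel the sign $(-1)^i$ against the antisymmetry of $\phi$ to rewrite $(-1)^i\phi(\sigma)=\phi(v\tau)$, and reindex over pairs $(v,\tau)$ with the insertion multiplicity $k+2$ converting $1/(k+2)!$ into $1/(k+1)!$. The paper performs the same computation, only writing the reindexing step more tersely; your explicit accounting of the multiplicity and of where partiteness is used is a faithful (and slightly more detailed) version of the same argument.
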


\begin{proof}
Let $\phi \in C^{k+1} (X, \mathbb{R}) , \psi \in C^{k} $, then 
\begin{dmath*}
\left\langle d_{(k,j)} \psi, \phi \right\rangle = \sum_{\sigma \in \Sigma (k+1)} \dfrac{m(\sigma )}{(k+2)!} d_{(k,j)} \psi (\sigma) \phi (\sigma) = \sum_{\sigma = (v_0,...,v_{k+1}) \in \Sigma (k+1), v_i \in S_j} \dfrac{m(\sigma )}{(k+2)!} (-1)^i \psi (\sigma_i) \phi (\sigma) = \sum_{\sigma = (v_0,...,v_{k+1}) \in \Sigma (k+1), v_i \in S_j} \dfrac{m(v_i \sigma_i )}{(k+2)!}  \psi (\sigma_i) \phi (v_i \sigma_i) =
\sum_{\tau \in \Sigma (k)} \sum_{v \in S_j} \dfrac{m(v \tau )}{(k+1)!}  \psi (\tau) \phi (v \tau) = 
\sum_{\tau \in \Sigma (k)} \dfrac{m( \tau )}{(k+1)!}  \psi (\tau) \left( \sum_{v \in S_j}  \dfrac{m(v \tau )}{m (\tau)} \phi (v \tau) \right) .
\end{dmath*}
\end{proof}

A straightforward computation gives rise to:
\begin{corollary}
\label{side average operators}
For every $0 \leq k \leq n$, $0 \leq j \leq n$ and every $\phi \in C^{k} (X,\mathbb{R})$ we have that
$$\Delta^{-}_{(k,j)} \phi (\sigma) =
\begin{cases}
0 & \sigma = (v_0,...,v_k), \forall i, v_i \notin S_j \\
(-1)^i \sum_{v \in S_j, v \sigma_i \in \Sigma (k)} \dfrac{m(v \sigma_i)}{m(\sigma_i)} \phi (v \sigma_i) & \sigma = (v_0,...,v_k), v_i \in S_j 
\end{cases}.$$
\end{corollary}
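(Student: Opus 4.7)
The plan is to unfold the definition $\Delta^{-}_{(k,j)} = d_{(k-1,j)} \delta_{(k-1,j)}$ and apply the two operators in sequence, using the explicit formula for $\delta_{(k-1,j)}$ that was just established in the preceding proposition.

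First I would apply $d_{(k-1,j)}$ to $\delta_{(k-1,j)} \phi$ at an ordered simplex $\sigma = (v_0,\ldots,v_k) \in \Sigma(k)$. By the very definition of $d_{(k-1,j)}$, one of two things happens depending on whether any vertex of $\sigma$ lies in $S_j$. Here the partite hypothesis is crucial: since the vertex set of any simplex hits each side $S_j$ in at most one vertex, the case distinction is clean. If none of the $v_i$ lies in $S_j$, then $d_{(k-1,j)} \psi(\sigma) = 0$ for every $\psi$, which accounts for the top case of the formula. Otherwise there is a unique index $i$ with $v_i \in S_j$, and $d_{(k-1,j)}(\delta_{(k-1,j)}\phi)(\sigma) = (-1)^i \, \delta_{(k-1,j)}\phi(\sigma_i)$.

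Next I would substitute the formula for $\delta_{(k-1,j)}$ from the preceding proposition (with $k$ replaced by $k-1$), applied at the face $\sigma_i \in \Sigma(k-1)$:
$$\delta_{(k-1,j)} \phi(\sigma_i) = \sum_{v \in S_j,\, v\sigma_i \in \Sigma(k)} \frac{m(v\sigma_i)}{m(\sigma_i)} \phi(v\sigma_i).$$
Multiplying by $(-1)^i$ yields exactly the second case of the claimed formula. There is essentially nothing to check beyond this composition, so the proof is purely computational.

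The only place that requires a bit of care is the first step, namely recognizing that the partite structure forces at most one vertex of $\sigma$ to lie in $S_j$, so that the piecewise definition of $d_{(k-1,j)}$ produces a single term rather than a sum over indices. Once this is noted, the rest is a direct substitution, which is why the authors merely call it ``a straightforward computation.'' I do not anticipate any real obstacle.
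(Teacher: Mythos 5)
Your proposal is correct and is exactly the computation the paper has in mind (the paper gives no written proof, merely labeling the corollary ``a straightforward computation''): unfold $\Delta^{-}_{(k,j)} = d_{(k-1,j)}\delta_{(k-1,j)}$, use the partite structure to see that at most one vertex of $\sigma$ lies in $S_j$ so the definition of $d_{(k-1,j)}$ yields the clean two-case split, and substitute the formula for $\delta_{(k-1,j)}$ at the face $\sigma_i$.
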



\section{Links of $X$}

Let $\lbrace v_{0},...,v_{j} \rbrace=\tau\in X^{(j)}$, denote by $X_{\tau}$
the \emph{link} of $\tau$ in $X$, that is, the (pure) complex of dimension
$n-j-1$ consisting on simplices $\sigma=\lbrace  w_{0},...,w_{k} \rbrace$
such that $\lbrace v_{0},...,v_{j} \rbrace, \lbrace w_{0},...,w_{k} \rbrace$ are disjoint as sets and $\lbrace v_{0},...,v_{j} \rbrace \cup \lbrace w_{0},...,w_{k} \rbrace \in X^{(j+k+1)}$. Note that for $\emptyset \in \Sigma (-1)$, $X_\emptyset =X$. \\
For an ordered simplex $( v_{0},...,v_{j} ) \in \Sigma (k)$ define $X_{( v_{0},...,v_{j} ) } = X_{\lbrace v_{0},...,v_{j} \rbrace }$. \\
Throughout this article we shall assume that all the $1$-skeletons of the links of $X$ of dimension $>0$ are connected . \\
Next, we will basically repeat the definitions that we gave above for $X$: \\  
For $0\leq k\leq n-j-1$, denote by $\Sigma_{\tau}(k)$ the set of
ordered $k$-simplices in $X_\tau$. \\
Define the function $m_\tau : \bigcup_{0 \leq k \leq n-j-1} \Sigma_\tau (k) \rightarrow \mathbb{R}^+$ as
$$\forall \sigma \in \Sigma_\tau (k), m_{\tau}(\sigma) = m (\tau \sigma ),$$
where $\tau \sigma$ is the concatenation of $\tau$ and $\sigma$, i.e., if $\tau = (v_{0},...,v_{j} ), \sigma = (w_{0},...,w_{k} )$ then  $ \tau \sigma = (v_{0},...,v_{j},w_{0},...,w_{k} )$.
\begin{proposition}
The function $m_\tau$ defined above is a balanced weight function of $X_\tau$.
\end{proposition}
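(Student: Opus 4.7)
The plan is to unpack both definitions and reduce the balanced condition for $m_\tau$ on $X_\tau$ directly to the balanced condition for $m$ on $X$, applied at the simplex $\tau \cup \eta$ (where $\eta$ ranges over faces of $X_\tau$).

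First I would observe that $m_\tau$ is well-defined as a function on unordered simplices of $X_\tau$. Given an unordered face $\eta \in X_\tau^{(k)}$, any two orderings of $\eta$ yield concatenations $\tau\sigma$ and $\tau\sigma'$ that represent the same underlying set $\tau \cup \eta \in X^{(j+k+1)}$; since $m$ on ordered simplices is by construction the pullback of $m$ on unordered simplices, $m(\tau\sigma) = m(\tau\sigma') = m(\tau \cup \eta)$. Thus $m_\tau(\eta) = m(\tau \cup \eta)$, extended to $X_\tau^{(-1)}$ by $m_\tau(\emptyset) = m(\tau)$. Strict positivity is immediate from strict positivity of $m$.

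Next I would verify the balanced identity. Fix $-1 \leq k \leq (n-j-1)-1 = n-j-2$ and $\eta \in X_\tau^{(k)}$. By the definition of the link, the $(k+1)$-simplices $\sigma$ of $X_\tau$ containing $\eta$ are in bijection with vertices $w \in X^{(0)} \setminus (\tau \cup \eta)$ such that $\tau \cup \eta \cup \{w\} \in X^{(j+k+2)}$, via $\sigma = \eta \cup \{w\}$. Writing $\gamma = \tau \cup \eta \cup \{w\}$, this bijection matches the $(k+1)$-cofaces of $\eta$ in $X_\tau$ with the $(j+k+2)$-cofaces of $\tau \cup \eta$ in $X$. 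Hence
$$\sum_{\sigma \in X_\tau^{(k+1)},\; \eta \subset \sigma} m_\tau(\sigma) \;=\; \sum_{\gamma \in X^{(j+k+2)},\; \tau \cup \eta \subset \gamma} m(\gamma).$$
Since $j+k+1 \leq n-1$, the balanced property of $m$ applied to $\tau \cup \eta \in X^{(j+k+1)}$ equates the right-hand side with $m(\tau \cup \eta) = m_\tau(\eta)$, which is exactly the desired equality.

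There is no real obstacle beyond the bookkeeping of identifying the indexing sets: one just has to be careful that the boundary cases $k = -1$ (giving $m_\tau(\emptyset) = m(\tau)$ and a sum over vertices of $X_\tau$) and $k = n-j-2$ (the top nontrivial case) both fall inside the range where $m$ is balanced on $X$, which they do because $j + k + 1$ runs over $\{j, \ldots, n-1\}$.
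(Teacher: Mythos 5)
Your proof is correct and follows essentially the same route as the paper: both reduce the balanced identity for $m_\tau$ at $\eta$ to the balanced identity for $m$ at $\tau\cup\eta$ via the bijection between cofaces of $\eta$ in $X_\tau$ and cofaces of $\tau\cup\eta$ in $X$. The only difference is presentational --- the paper carries out the computation on \emph{ordered} simplices (so the factor $\frac{(k+2)!}{(j+k+3)!}$ appears and cancels against the ordered form of the balanced condition), while you work directly with unordered simplices, which is slightly cleaner.
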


\begin{proof}
The fact that $m_\tau$ is invariant under permutation is obvious, therefore we are left to check that for every $\eta \in \Sigma_\tau (k)$ we have
$$ \sum_{\sigma \in \Sigma_\tau (k+1), \eta \subset \sigma} m_\tau ( \sigma ) = (k+2)!  m_\tau (\eta ).$$
For $\eta \in \Sigma_\tau (k)$ we have by definition
\begin{dmath*}
 \sum_{\sigma \in \Sigma_\tau (k+1), \eta \subset \sigma } m_\tau (\sigma)= \sum_{ \sigma \in \Sigma_\tau (k+1), \eta \subset \sigma } m (\tau \sigma) = \sum_{ \gamma \in \Sigma (j+k+2), \tau \eta \subset \gamma } \dfrac{ (k+2)!}{(j+k+3)!} m ( \gamma ) =
= { (k+2)! m (\tau \eta ) =  (k+2)!  m_\tau ( \eta ).}
\end{dmath*}

\end{proof}

We showed that $X_\tau$ is a weighted simplicial complex with the weight function $m_\tau$ and therefore we can repeat all the definitions given before for $X$. Therefore we have:
\begin{enumerate}
\item $C^{k}(X_{\tau},\mathbb{R})$ with the inner product on it.
\item Differential $d_{\tau,k} : C^{k}(X_{\tau},\mathbb{R}) \rightarrow C^{k+1}(X_{\tau},\mathbb{R})$, $\delta_{\tau,k} = (d_{\tau,k})^*. \delta_{\tau,0}$.
\item $\Delta_{\tau,k}^+ = \delta_{\tau,k} d_{\tau,k},  \Delta_{\tau,k}^- = d_{\tau ,k-1} \delta_{\tau,k-1}, \Delta_{\tau,k} = \Delta_{\tau,k}^+ + \Delta_{\tau,k}^-$.
\end{enumerate} 

From now on, when there is no chance of confusion, we will omit the index of $d_{\tau,k}, \delta_{\tau,k}, \Delta^+_{\tau,k}, \Delta^-_{\tau,k}, \Delta_{\tau,k} $ and just refer to them as $d_\tau, \delta_\tau, \Delta^+_\tau, \Delta^-_\tau, \Delta_\tau $.
\begin{remark}
Notice that for an $n$-dimensional simplicial complex $X$, if $m$ is homogeneous, then for every $\tau \in \Sigma (n-2)$, $X_\tau$ is a graph such that $m_\tau$ assigns the value $1$ for every edge. In this case, $\Delta_{\tau,0}^+$ is the usual graph Laplacian.
\end{remark}
We now turn to describe how maps $C^{k}(X,\mathbb{R})$ induce maps on the links of $X$. This is done in two different ways described below: localization and restriction. 

\subsection{Localization and Garland's method}
\begin{definition}
For $\tau \in \Sigma (j)$ and $j-1 \leq k \leq n$ define the \emph{localization map} \[
C^{k}(X,\mathbb{R})\rightarrow C^{k-j-1}(X_{\tau},\mathbb{R}),\;\phi\rightarrow\phi_{\tau} ,\]
where $\phi_\tau$ is defined by $\phi_{\tau}(\sigma)=\phi(\tau\sigma)$. 
\end{definition}
When $\phi \in C^k (X, \mathbb{R} ), k>0$, one can compute $\Vert \phi \Vert^2, \Vert \delta \phi \Vert^2, \Vert d \phi \Vert^2$ by using all the localizations of the form $\phi_\tau, \tau \in \Sigma (k-1)$. After these calculations, one can also bound the spectrum of high Laplacians $\Delta^+_k$, based on the spectral gap in the links. This idea was introduced by Garland in \cite{Gar} and is known today as Garland's method. This is a well known method (see for instance \cite{BS} and \cite{Borel}) and therefore we state the most of the results below without proofs.

\begin{lemma}
\label{LocalizNorm1}
For every $0 \leq k \leq n$ and every $\phi, \psi \in C^k (X, \mathbb{R} )$, one has:
\begin{enumerate}
\item $(k+1)! \left\langle \phi , \psi \right\rangle= \sum_{\tau \in \Sigma(k-1)} \left\langle \phi_\tau , \psi_\tau \right\rangle .$
\item For $\tau \in \Sigma (k-1)$, 
$k! \left\langle \delta \phi , \delta \psi \right\rangle= \sum_{\tau \in \Sigma(k-1)} \left\langle \delta_\tau \phi_\tau , \delta_\tau \psi_\tau \right\rangle .$

\end{enumerate}
\end{lemma}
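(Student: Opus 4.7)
My approach is to expand everything via the definition of the inner product on $C^k(X,\mathbb{R})$ and the explicit formula for $\delta$, and then reindex.

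For part (1), the right-hand side unfolds as
\[
\sum_{\tau\in\Sigma(k-1)} \langle \phi_\tau,\psi_\tau\rangle = \sum_{\tau\in\Sigma(k-1)}\sum_{v\in\Sigma_\tau(0)} m_\tau(v)\,\phi_\tau(v)\,\psi_\tau(v) = \sum_{\tau\in\Sigma(k-1)}\sum_{v : \tau v\in\Sigma(k)} m(\tau v)\,\phi(\tau v)\,\psi(\tau v),
\]
where I used $m_\tau(v)=m(\tau v)$ and $\phi_\tau(v)=\phi(\tau v)$. The map $(\tau,v)\mapsto \tau v$ is a bijection between its domain and $\Sigma(k)$ (take $\tau$ to be the first $k$ vertices of $\sigma\in\Sigma(k)$ and $v$ its last), so the right-hand side equals $\sum_{\sigma\in\Sigma(k)} m(\sigma)\phi(\sigma)\psi(\sigma) = (k+1)!\,\langle\phi,\psi\rangle$.

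For part (2), I would compute both sides explicitly and compare. Using the formula for $\delta$ from the previous proposition,
\[
k!\,\langle\delta\phi,\delta\psi\rangle = \sum_{\eta\in\Sigma(k-1)} m(\eta)\,\delta\phi(\eta)\,\delta\psi(\eta) = \sum_{\eta\in\Sigma(k-1)} \frac{1}{m(\eta)} \sum_{v,w} m(v\eta)m(w\eta)\phi(v\eta)\psi(w\eta),
\]
with the inner sum ranging over vertices $v,w$ with $v\eta,w\eta\in\Sigma(k)$. On the other hand, $\delta_\tau\phi_\tau$ lives in $C^{-1}(X_\tau,\mathbb{R})$, and its value at $\emptyset$ is $\sum_{v:\tau v\in\Sigma(k)} \frac{m(\tau v)}{m(\tau)}\phi(\tau v)$, while $\langle\delta_\tau\phi_\tau,\delta_\tau\psi_\tau\rangle = m(\tau)\,\delta_\tau\phi_\tau(\emptyset)\,\delta_\tau\psi_\tau(\emptyset)$ since $m_\tau(\emptyset)=m(\tau)$. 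Expanding gives
\[
\sum_{\tau\in\Sigma(k-1)} \langle\delta_\tau\phi_\tau,\delta_\tau\psi_\tau\rangle = \sum_{\tau\in\Sigma(k-1)} \frac{1}{m(\tau)} \sum_{v,w} m(\tau v) m(\tau w) \phi(\tau v)\psi(\tau w).
\]

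The only subtlety is reconciling $\phi(v\eta)$ with $\phi(\eta v)$. Moving the first vertex to the last position is a cyclic permutation of $k+1$ symbols with sign $(-1)^k$, so antisymmetry gives $\phi(v\eta) = (-1)^k\phi(\eta v)$ and similarly for $\psi$; hence $\phi(v\eta)\psi(w\eta) = (-1)^{2k}\phi(\eta v)\psi(\eta w) = \phi(\eta v)\psi(\eta w)$. The weight $m$ is symmetric on ordered simplices by Proposition \ref{weightedOrderDef}, so $m(v\eta)=m(\eta v)$. Renaming $\eta$ as $\tau$, the two expressions coincide and part (2) follows. The hardest step is keeping track of this sign, which conveniently vanishes because it appears identically on both the $\phi$ and $\psi$ factors.
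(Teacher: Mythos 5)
Your proof is correct. The paper actually states Lemma \ref{LocalizNorm1} without proof (it invokes Garland's method and refers to \cite{BS} and \cite{Borel}), so there is no in-paper argument to compare against; your expand-and-reindex computation is the standard one, and it matches the style of the analogous computations the paper does carry out explicitly (e.g., the proofs of Proposition \ref{partite localization of Delta^-} and Lemma \ref{restNorm1}). In particular you correctly handle the two points where a slip is easy: the identification $m_\tau(\emptyset)=m(\tau)$ via the balanced-weight condition, and the sign $(-1)^k$ from converting the prepended $v\eta$ in the formula for $\delta$ to the appended $\eta v$ used by the localization, which indeed cancels because it occurs in both factors.
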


\begin{lemma}
\label{LocalizNorm2}
For every $0 \leq k \leq n-1$ and every $\phi, \psi \in C^k (X, \mathbb{R} )$, one has:
 $$  k! \left\langle d \phi , d \psi \right\rangle =  \sum_{\tau \in \Sigma (k-1)} \left( \left\langle d_\tau \phi_\tau , d_\tau \psi_\tau \right\rangle -  \dfrac{ k}{k+1} \left\langle \phi_\tau , \psi_\tau \right\rangle \right).$$
 \end{lemma}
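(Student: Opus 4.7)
By polarization it suffices to prove the identity in the case $\phi = \psi$. Applying Lemma \ref{LocalizNorm1}(1) to $\phi$ gives $\sum_{\tau \in \Sigma(k-1)} \Vert \phi_\tau \Vert^2 = (k+1)! \Vert \phi \Vert^2$, so the $\left\langle \phi_\tau, \phi_\tau \right\rangle$-terms on the right-hand side contribute exactly $-k \cdot k! \Vert \phi \Vert^2$. The lemma therefore reduces to the identity
\[
\sum_{\tau \in \Sigma(k-1)} \Vert d_\tau \phi_\tau \Vert^2 \;=\; k! \Vert d\phi \Vert^2 + k \cdot k! \Vert \phi \Vert^2.
\]

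To handle the left-hand side I would expand
\[
\Vert d_\tau \phi_\tau \Vert^2 \;=\; \tfrac{1}{2} \sum_{(u,v) \in \Sigma_\tau(1)} m(\tau u v) (\phi(\tau v) - \phi(\tau u))^2,
\]
use the symmetry $m(\tau u v) = m(\tau v u)$ to collapse the $\phi(\tau u)^2$ and $\phi(\tau v)^2$ contributions, and apply the balanced-weight identity $\sum_u m(\tau u v) = m(\tau v)$ (Definition \ref{weightedDef} applied to the unordered $k$-simplex $\tau \cup \{v\}$). Summing over $\tau \in \Sigma(k-1)$ and using the unique factorizations $(\tau, v) \leftrightarrow \rho = \tau v \in \Sigma(k)$ and $(\tau, u, v) \leftrightarrow \sigma = \tau u v \in \Sigma(k+1)$ gives
\[
\sum_\tau \Vert d_\tau \phi_\tau \Vert^2 \;=\; (k+1)! \Vert \phi \Vert^2 - \sum_{\sigma \in \Sigma(k+1)} m(\sigma) \phi(\sigma_k) \phi(\sigma_{k+1}).
\]

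For $\Vert d\phi \Vert^2$ the plan is to expand $(d\phi(\sigma))^2 = \sum_i \phi(\sigma_i)^2 + \sum_{i \neq j} (-1)^{i+j} \phi(\sigma_i) \phi(\sigma_j)$ and weight by $m(\sigma)/(k+2)!$. The diagonal part contributes $\Vert \phi \Vert^2$ via the balanced-weight identity $\sum_\sigma m(\sigma) \phi(\sigma_i)^2 = (k+1)! \Vert \phi \Vert^2$ for each fixed $i$. The main technical step, and the principal obstacle in the plan, is to show that the signed pair-sum $(-1)^{i+j} \sum_\sigma m(\sigma) \phi(\sigma_i) \phi(\sigma_j)$ is \emph{independent} of the choice of ordered pair $(i,j)$ with $i \neq j$: for any transposition $\pi$ of position indices, the substitution $\sigma \mapsto \sigma \circ \pi$ preserves $m(\sigma)$, while the antisymmetry of $\phi$ produces sign factors in $\phi(\sigma_i) \phi(\sigma_j)$ that exactly cancel the change in $(-1)^{i+j}$. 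Specializing to $(i,j) = (k, k+1)$ and counting the $(k+2)(k+1)$ ordered pairs then yields
\[
\sum_{\sigma \in \Sigma(k+1)} m(\sigma) \phi(\sigma_k) \phi(\sigma_{k+1}) \;=\; k! \bigl( \Vert \phi \Vert^2 - \Vert d\phi \Vert^2 \bigr).
\]

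Substituting this expression into the formula for $\sum_\tau \Vert d_\tau \phi_\tau \Vert^2$ and simplifying via $(k+1)! - k! = k \cdot k!$ produces $k! \Vert d\phi \Vert^2 + k \cdot k! \Vert \phi \Vert^2$, which is exactly the reduced target identity. Beyond the symmetry claim, everything else in this outline is routine bookkeeping with the balanced weight and index reparametrizations.
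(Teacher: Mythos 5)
Your proposal is correct: the polarization reduction, the balanced-weight identity $\sum_u m(\tau u v)=m(\tau v)$, the reparametrizations $(\tau,v)\leftrightarrow\rho$ and $(\tau,u,v)\leftrightarrow\sigma$, and the key symmetry claim that $(-1)^{i+j}\sum_\sigma m(\sigma)\phi(\sigma_i)\phi(\sigma_j)$ is independent of $(i,j)$ (checked on adjacent transpositions, where the sign from antisymmetry of $\phi$ cancels the change in $(-1)^{i+j}$) all go through and combine to give exactly $k!\Vert d\phi\Vert^2+k\cdot k!\Vert\phi\Vert^2$. The paper itself states Lemma \ref{LocalizNorm2} without proof, deferring to the standard Garland-method computations of \cite{BS} and \cite{Borel}; your argument is precisely that standard computation, so there is nothing substantively different to compare.
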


\begin{corollary}
\label{LocalizNorm3}
For every $1 \leq k \leq n$ and every $\phi, \psi \in C^k (X, \mathbb{R} )$, one has:
 $$  k! \langle d \phi , d \psi \rangle + k!k \langle \phi , \psi \rangle =  \sum_{\tau \in \Sigma (k-1)} \langle d_\tau \phi_\tau, d_\tau \psi_\tau \rangle .$$
In particular, for $\phi = \psi$, one has:
 $$  k! \Vert d \phi \Vert^2 + k!k \Vert \phi \Vert^2 =  \sum_{\tau \in \Sigma (k-1)} \Vert d_\tau \phi_\tau \Vert^2 .$$
\end{corollary}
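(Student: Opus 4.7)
The statement is a direct algebraic consequence of the two preceding lemmas, so the plan is just to combine them.

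The plan is to start from Lemma \ref{LocalizNorm2} applied to $\phi$ and $\psi$, which reads
$$k! \langle d\phi, d\psi \rangle = \sum_{\tau \in \Sigma(k-1)} \langle d_\tau \phi_\tau, d_\tau \psi_\tau \rangle - \frac{k}{k+1} \sum_{\tau \in \Sigma(k-1)} \langle \phi_\tau, \psi_\tau \rangle.$$
The goal is to replace the second sum on the right by a global inner product $\langle \phi, \psi \rangle$, and then to transpose it to the left-hand side.

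Next, I would invoke part 1 of Lemma \ref{LocalizNorm1}, which gives
$$\sum_{\tau \in \Sigma(k-1)} \langle \phi_\tau, \psi_\tau \rangle = (k+1)! \langle \phi, \psi \rangle.$$
Substituting this identity into the previous display and noting that $\frac{k}{k+1}(k+1)! = k \cdot k!$, we obtain
$$k! \langle d\phi, d\psi \rangle = \sum_{\tau \in \Sigma(k-1)} \langle d_\tau \phi_\tau, d_\tau \psi_\tau \rangle - k! k \langle \phi, \psi \rangle,$$
and transposing the last term to the left yields exactly the claimed equality. Setting $\psi = \phi$ gives the squared-norm version for free, since both inner products on the left become norms and the right-hand side becomes $\sum_\tau \|d_\tau \phi_\tau\|^2$.

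There is no real obstacle here: the corollary is purely a repackaging of Lemmas \ref{LocalizNorm1} and \ref{LocalizNorm2}, with the single bookkeeping step being the cancellation of the factorial factors $\frac{k}{k+1}(k+1)! = k \cdot k!$. The only thing worth flagging is that for the statement to be non-vacuous in its claimed range one needs $k-1 \leq n-1$ and $\phi_\tau$ to admit a non-trivial differential in $X_\tau$, which is why the relevant hypothesis in Lemma \ref{LocalizNorm2} is $k \leq n-1$; for $k=n$ the equation still holds formally but both $d\phi$ and each $d_\tau \phi_\tau$ are forced to vanish for dimensional reasons, so the content is concentrated in the range $1 \leq k \leq n-1$.
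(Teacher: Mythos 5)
Your derivation for $1 \leq k \leq n-1$ is exactly the intended one: the paper states this corollary without proof (it is part of the standard Garland-method toolkit it imports), and the whole content is the substitution of part 1 of Lemma \ref{LocalizNorm1} into Lemma \ref{LocalizNorm2} together with the bookkeeping identity $\frac{k}{k+1}(k+1)! = k\cdot k!$. That part of your argument is correct and complete.

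Your closing remark about the endpoint $k=n$, however, is wrong, and it is worth being precise about why. You correctly observe that for $k=n$ both $d\phi$ and every $d_\tau\phi_\tau$ vanish for dimensional reasons ($C^{n+1}(X,\mathbb{R})=0$, and for $\tau\in\Sigma(n-1)$ the link $X_\tau$ is $0$-dimensional, so $C^1(X_\tau,\mathbb{R})=0$). But then the claimed identity does not ``hold formally'' -- it collapses to $n!\,n\,\langle\phi,\psi\rangle=0$, which is false for $\phi=\psi\neq 0$. The identity genuinely fails at $k=n$, precisely because Lemma \ref{LocalizNorm2} is only valid for $k\leq n-1$: at $k=n$ its left-hand side is $0$ while, by Lemma \ref{LocalizNorm1}, its right-hand side equals $-n\cdot n!\,\langle\phi,\psi\rangle$. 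So the range ``$1\leq k\leq n$'' in the corollary's statement should be read as $1\leq k\leq n-1$ (a harmless slip in the paper, since every subsequent application -- Lemma \ref{SpecGapLocalToGlobal1} and Theorem \ref{Contraction in partite case thm} -- only invokes it for $0\leq k\leq n-1$). The fix to your write-up is simply to delete the claim that the $k=n$ case holds and note instead that the statement must be restricted to $k\leq n-1$.
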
 


\begin{lemma}
\label{SpecGapLocalToGlobal1}
Let $X$ be a pure $n$-dimensional weighted simplicial complex such that all the links of $X$ of dimension $>0$ are connected. Also, assume that $n>1$. For $0 \leq k \leq n-1$, if there are $\kappa \geq \lambda >0$ such that
$$\bigcup_{\tau \in \Sigma (k-1)} Spec (\Delta_{\tau,0}^+) \setminus \lbrace 0 \rbrace \subseteq [\lambda, \kappa],$$
then for every $\phi \in C^k (X, \mathbb{R})$ we have
$$(k+1) \Vert \phi \Vert^2 \left( \kappa - \dfrac{k}{k+1} \right) - \kappa  \Vert \delta \phi \Vert^2  \geq    \Vert d \phi \Vert^2  \geq (k+1) \Vert \phi \Vert^2 \left( \lambda - \dfrac{k}{k+1} \right) - \lambda  \Vert \delta \phi \Vert^2,$$
or equivalently,
\begin{align*}
(k+1) \Vert \phi \Vert^2 \left( \kappa - \dfrac{k}{k+1} \right) - \kappa  \langle \Delta^-_k \phi, \phi \rangle   \geq    \langle \Delta^+_k \phi, \phi \rangle  \geq \\
(k+1) \Vert \phi \Vert^2 \left( \lambda - \dfrac{k}{k+1} \right) - \lambda  \langle \Delta^-_k \phi, \phi \rangle.
\end{align*}
\end{lemma}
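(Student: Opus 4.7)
The plan is to apply Garland's method: localize $\phi$ to links of $(k-1)$-simplices, use the spectral assumption on the $0$-th upper Laplacian of each link, and reassemble.

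First I would invoke Corollary \ref{LocalizNorm3} to write
\[
\sum_{\tau \in \Sigma(k-1)} \|d_\tau \phi_\tau\|^2 \;=\; k!\,\|d\phi\|^2 + k!\,k\,\|\phi\|^2,
\]
so that estimating $\|d\phi\|^2$ reduces to estimating $\|d_\tau\phi_\tau\|^2$ for every $\tau\in\Sigma(k-1)$. Note that $\phi_\tau\in C^0(X_\tau,\mathbb{R})$, and
\[
\|d_\tau\phi_\tau\|^2 \;=\; \langle \delta_\tau d_\tau \phi_\tau,\phi_\tau\rangle \;=\; \langle \Delta^+_{\tau,0}\phi_\tau,\phi_\tau\rangle.
\]

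The next step is the key link-by-link spectral estimate. Because the $1$-skeleton of $X_\tau$ is connected, $\ker(\Delta^+_{\tau,0})$ consists of the constants, and by Proposition \ref{ProjOnConstProp} the orthogonal projection of $\phi_\tau$ onto the constants is exactly $\Delta^-_{\tau,0}\phi_\tau$. Hence $\phi_\tau-\Delta^-_{\tau,0}\phi_\tau$ lies in $\ker(\Delta^+_{\tau,0})^\perp$, and since $\Delta^+_{\tau,0}$ kills the constant part,
\[
\langle \Delta^+_{\tau,0}\phi_\tau,\phi_\tau\rangle = \langle \Delta^+_{\tau,0}(\phi_\tau-\Delta^-_{\tau,0}\phi_\tau),\,\phi_\tau-\Delta^-_{\tau,0}\phi_\tau\rangle.
\]
The spectral hypothesis $\mathrm{Spec}(\Delta^+_{\tau,0})\setminus\{0\}\subseteq[\lambda,\kappa]$ therefore gives
\[
\lambda\bigl(\|\phi_\tau\|^2-\|\Delta^-_{\tau,0}\phi_\tau\|^2\bigr)
\;\leq\; \|d_\tau\phi_\tau\|^2 \;\leq\;
\kappa\bigl(\|\phi_\tau\|^2-\|\Delta^-_{\tau,0}\phi_\tau\|^2\bigr),
\]
using Pythagoras on the orthogonal decomposition $\phi_\tau=\Delta^-_{\tau,0}\phi_\tau+(\phi_\tau-\Delta^-_{\tau,0}\phi_\tau)$.

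Now I would sum these inequalities over $\tau\in\Sigma(k-1)$. By Lemma \ref{LocalizNorm1}(1),
\[
\sum_{\tau}\|\phi_\tau\|^2 = (k+1)!\,\|\phi\|^2.
\]
For the other term I use Proposition \ref{Delta0Norm} applied in each link $X_\tau$, which gives $\|\Delta^-_{\tau,0}\phi_\tau\|^2=\|\delta_{\tau,-1}\phi_\tau\|^2$, and then Lemma \ref{LocalizNorm1}(2) to recognize
\[
\sum_{\tau}\|\Delta^-_{\tau,0}\phi_\tau\|^2 \;=\; \sum_{\tau}\|\delta_\tau\phi_\tau\|^2 \;=\; k!\,\|\delta\phi\|^2.
\]
Combining these with the identity from Corollary \ref{LocalizNorm3} and dividing by $k!$ yields
\[
(k+1)\|\phi\|^2\!\left(\kappa-\tfrac{k}{k+1}\right)-\kappa\|\delta\phi\|^2 \;\geq\; \|d\phi\|^2 \;\geq\; (k+1)\|\phi\|^2\!\left(\lambda-\tfrac{k}{k+1}\right)-\lambda\|\delta\phi\|^2,
\]
which is the first form stated. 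The equivalent form in terms of $\langle\Delta^+_k\phi,\phi\rangle$ and $\langle\Delta^-_k\phi,\phi\rangle$ is immediate from $\|d\phi\|^2=\langle\Delta^+_k\phi,\phi\rangle$ and $\|\delta\phi\|^2=\langle\Delta^-_k\phi,\phi\rangle$.

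The only delicate point is identifying $\sum_\tau\|\Delta^-_{\tau,0}\phi_\tau\|^2$ with $k!\,\|\delta\phi\|^2$; everything else is formal application of the already-established localization identities and standard spectral reasoning. I expect this to be the main obstacle only in the bookkeeping sense, since it requires chaining Proposition \ref{Delta0Norm} (a link-wise identity between two norms) with Lemma \ref{LocalizNorm1}(2) (a global identity summing $\|\delta_\tau\phi_\tau\|^2$).
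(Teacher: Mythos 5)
Your proposal is correct and follows essentially the same route as the paper's proof: localize to links of $(k-1)$-simplices, use connectedness plus Proposition \ref{ProjOnConstProp} to identify the kernel of $\Delta^+_{\tau,0}$ with the constants and apply the spectral hypothesis to the complement, then reassemble via Proposition \ref{Delta0Norm}, Lemma \ref{LocalizNorm1} and Corollary \ref{LocalizNorm3}. No gaps.
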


\begin{proof}

Let $0 \leq k \leq n-1$. Fix some $\tau \in \Sigma (k-1)$ and some $\phi \in C^k (X, \mathbb{R})$. For $\phi_\tau$ recall that, by Proposition \ref{ProjOnConstProp}, $\Delta_{\tau,0}^- \phi_\tau$ is the projection of $\phi_\tau$ on the space of constant functions. Denote by $(\phi_\tau)^1$ the orthogonal complement of this projection. Since the $1$-skeleton of $X_\tau$ is connected, we have that $\Ker (\Delta_{\tau,0}^+)$ is exactly the space of constant functions and therefore 
$$\kappa \Vert (\phi_\tau)^1 \Vert^2 \geq \left\langle \Delta_{\tau, 0}^+ \phi_\tau , \phi_\tau \right\rangle \geq \lambda \Vert (\phi_\tau)^1 \Vert^2.$$
Note that $\Vert (\phi_\tau)^1 \Vert^2 =\Vert \phi_\tau \Vert^2 - \Vert \Delta_{\tau,0}^- \phi_\tau \Vert^2$ and that $\left\langle \Delta_{\tau, 0}^+ \phi_\tau , \phi_\tau \right\rangle = \Vert d_\tau \phi_\tau \Vert^2$. Therefore
$$\kappa \Vert \phi_\tau \Vert^2 - \Vert \Delta_{\tau,0}^- \phi_\tau \Vert^2  \geq \Vert d_\tau \phi_\tau \Vert^2 \geq \lambda \Vert \phi_\tau \Vert^2 - \Vert \Delta_{\tau,0}^- \phi_\tau \Vert^2.$$
Since the above inequality is true for every $\tau \in \Sigma (k-1)$, we can sum over all $\tau \in \Sigma (k-1)$ and get
$$\kappa \sum_{\tau \in \Sigma (k-1)} \left( \Vert \phi_\tau \Vert^2 - \Vert \Delta_{\tau,0}^- \phi_\tau \Vert^2 \right) \geq \sum_{\tau \in \Sigma (k-1)}  \Vert d_\tau \phi_\tau \Vert^2 \geq \lambda \sum_{\tau \in \Sigma (k-1)}  \left( \Vert \phi_\tau \Vert^2 - \Vert \Delta_{\tau,0}^- \phi_\tau \Vert^2 \right).$$
By Proposition \ref{Delta0Norm}, we have that $\Vert  \Delta_{\tau,0}^- \phi_\tau \Vert^2 = \Vert \delta_{\tau,0} \phi_\tau \Vert^2$, therefore we can write 
$$\kappa \sum_{\tau \in \Sigma (k-1)} \left( \Vert \phi_\tau \Vert^2 - \Vert \delta_{\tau,0} \phi_\tau \Vert^2 \right) \geq \sum_{\tau \in \Sigma (k-1)}  \Vert d_\tau \phi_\tau \Vert^2 \geq \lambda \sum_{\tau \in \Sigma (k-1)}  \left( \Vert \phi_\tau \Vert^2 - \Vert \delta_{\tau,0} \phi_\tau \Vert^2 \right).$$
By Lemma \ref{LocalizNorm1}, applied for $\phi = \psi$, we have that 
$$\sum_{\tau \in \Sigma (k-1)} \left( \Vert \phi_\tau \Vert^2 - \Vert \delta_{\tau,0} \phi_\tau \Vert^2 \right) = (k+1)! \Vert \phi \Vert^2 - k! \Vert \delta \phi \Vert^2.$$
Therefore
$$\kappa \left( (k+1)! \Vert \phi \Vert^2 - k! \Vert \delta \phi \Vert^2 \right) \geq \sum_{\tau \in \Sigma (k-1)}  \Vert d_\tau \phi_\tau \Vert^2 \geq \lambda \left( (k+1)! \Vert \phi \Vert^2 - k! \Vert \delta \phi \Vert^2 \right).$$
By Corollary \ref{LocalizNorm3} we have for every $\phi \in C^k (X, \mathbb{R})$ that
  $$  k! \Vert d \phi \Vert^2 + k!k \Vert \phi \Vert^2 =  \sum_{\tau \in \Sigma (k-1)} \Vert d_\tau \phi_\tau \Vert^2 ,$$ 
and therefore
$$\kappa \left( (k+1)! \Vert \phi \Vert^2 - k! \Vert \delta \phi \Vert^2 \right) \geq   k! \Vert d \phi \Vert^2 + k!k \Vert \phi \Vert^2 \geq \lambda \left( (k+1)! \Vert \phi \Vert^2 - k! \Vert \delta \phi \Vert^2 \right).$$
Dividing by $k!$ and then subtracting $k \Vert \phi \Vert^2$ gives the inequality stated in the lemma.
\end{proof}

\begin{corollary}
\label{SpecGapLocalToGlobal2}
Let $X$ as in the above lemma. For $0 \leq k \leq n-1$, if there are $\kappa \geq \lambda >\frac{k}{k+1}$ such that
$$\bigcup_{\tau \in \Sigma (k-1)} Spec (\Delta_{\tau, 0}^+) \setminus \lbrace 0 \rbrace \subseteq [\lambda, \kappa],$$
then there is an orthogonal decomposition $C^k (X,\mathbb{R}) = \Ker (\Delta_k^+) \oplus \Ker (\Delta_k^-)$  and 
$$ Spec (\Delta_{k}^+) \setminus \lbrace 0 \rbrace \subseteq [(k+1 )\lambda - k, (k+1) \kappa- k], $$
$$Spec (\Delta_{k+1}^-) \setminus \lbrace 0 \rbrace \subseteq [(k+1 )\lambda - k, (k+1) \kappa- k] .$$
\end{corollary}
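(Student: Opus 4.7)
The plan is to derive the corollary as a direct consequence of Lemma \ref{SpecGapLocalToGlobal1}, using the standard Hodge-theoretic decomposition set up in Proposition \ref{HodgeConsid}. The key observation is that the lower bound in Lemma \ref{SpecGapLocalToGlobal1} is strict and positive when restricted to harmonic forms, forcing triviality of $\Ker(\Delta_k)$; once that vanishing is in hand, everything else is essentially bookkeeping on the Hodge decomposition and the inequality applied to one summand.

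First, I would establish that $\Ker(\Delta_k) = 0$. Suppose $\phi \in \Ker(\Delta_k) = \Ker(\Delta_k^+) \cap \Ker(\Delta_k^-)$. Then $\langle \Delta_k^+ \phi, \phi \rangle = 0$ and $\langle \Delta_k^- \phi, \phi \rangle = 0$, so the lower inequality from Lemma \ref{SpecGapLocalToGlobal1} specializes to
\[
0 \;\geq\; (k+1)\|\phi\|^2\left(\lambda - \frac{k}{k+1}\right) \;=\; \|\phi\|^2 \bigl((k+1)\lambda - k\bigr).
\]
Since $\lambda > \frac{k}{k+1}$, the factor $(k+1)\lambda - k$ is strictly positive, so $\phi = 0$.

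Next, I would invoke the Hodge decomposition (from the proof of Proposition \ref{HodgeConsid})
\[
C^k(X,\mathbb{R}) \;=\; \Ker(\Delta_k) \,\oplus\, \im(d_{k-1}) \,\oplus\, \im(\delta_k),
\]
with all three summands mutually orthogonal. Combined with the vanishing $\Ker(\Delta_k)=0$ proven above, together with the identities $\Ker(\Delta_k^+) = \Ker(\Delta_k)\oplus\im(d_{k-1})$ and $\Ker(\Delta_k^-) = \Ker(\Delta_k)\oplus\im(\delta_k)$ from that same proof, this yields the orthogonal decomposition
\[
C^k(X,\mathbb{R}) \;=\; \Ker(\Delta_k^+) \,\oplus\, \Ker(\Delta_k^-),
\]
which is the first assertion.

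For the spectral bound on $\Delta_k^+$, I would restrict attention to $\phi \in \Ker(\Delta_k^-)$, which by the above equals $\im(\delta_k)$. On this subspace $\langle \Delta_k^- \phi, \phi \rangle = 0$, so Lemma \ref{SpecGapLocalToGlobal1} becomes
\[
\|\phi\|^2\bigl((k+1)\kappa - k\bigr) \;\geq\; \langle \Delta_k^+ \phi, \phi \rangle \;\geq\; \|\phi\|^2\bigl((k+1)\lambda - k\bigr).
\]
Since $\Delta_k^+ = \delta_k d_k$ sends $C^k(X,\mathbb{R})$ into $\im(\delta_k) = \Ker(\Delta_k^-)$, the self-adjoint operator $\Delta_k^+$ preserves this subspace, and the displayed Rayleigh-quotient bounds give that the spectrum of $\Delta_k^+|_{\Ker(\Delta_k^-)}$ lies in $[(k+1)\lambda - k,\,(k+1)\kappa - k]$. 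Since $\Delta_k^+$ vanishes on the complementary summand $\Ker(\Delta_k^+)$, this accounts for all of its nonzero spectrum. Finally, the analogous statement for $\Delta_{k+1}^-$ is immediate from the equivalence of nonzero spectra $\Spec(\Delta_k^+)\setminus\{0\} = \Spec(\Delta_{k+1}^-)\setminus\{0\}$ recorded in Proposition \ref{HodgeConsid}.

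The only mildly delicate step is the vanishing of $\Ker(\Delta_k)$: it is essential to use the lower bound of Lemma \ref{SpecGapLocalToGlobal1} rather than merely the upper bound, and to exploit the strict hypothesis $\lambda > k/(k+1)$. Everything else is formal manipulation of the Hodge decomposition already developed in the paper.
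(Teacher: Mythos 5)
Your proof is correct and follows exactly the route the paper intends: the paper states this corollary without proof (citing Garland's method as well known), and the intended argument is precisely your combination of Lemma \ref{SpecGapLocalToGlobal1} with the Hodge-theoretic identities from Proposition \ref{HodgeConsid} --- killing $\Ker(\Delta_k)$ via the strict lower bound $\lambda > k/(k+1)$, reading off the Rayleigh-quotient bounds on $\Ker(\Delta_k^-)=\im(\delta_k)$, and transferring to $\Delta_{k+1}^-$ by the equality of nonzero spectra. No gaps; your remark that the vanishing of $\Ker(\Delta_k)$ is the one step requiring the strict hypothesis is exactly the right point to flag.
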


Lemma \ref{SpecGapLocalToGlobal1} has the following corollary:

\begin{corollary}
\label{norm bound - local to global}
Let $X$ as in the above lemma. For $0 \leq k \leq n-1$, if there are $\kappa \geq \lambda >\frac{k}{k+1}$ such that
$$\bigcup_{\tau \in \Sigma (k-1)} Spec (\Delta_{\tau, 0}^+) \setminus \lbrace 0 \rbrace \subseteq [\lambda, \kappa],$$
then 
$$\left\Vert \Delta_k^+ + \dfrac{\lambda + \kappa}{2} \Delta_k^- - (k+1) (\dfrac{\lambda + \kappa}{2} - \dfrac{k}{k+1}) I \right\Vert \leq (k+1) \dfrac{\kappa - \lambda}{2},$$
where $\Vert. \Vert$ denotes the operator norm.
\end{corollary}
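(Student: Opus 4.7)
\bigskip

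\noindent\textbf{Proof plan for Corollary \ref{norm bound - local to global}.}
The plan is to reduce the operator norm bound to a pair of quadratic form inequalities and then read those off directly from Lemma \ref{SpecGapLocalToGlobal1}. Set $\alpha = \frac{\lambda+\kappa}{2}$ and $\beta = \frac{\kappa-\lambda}{2}$, so that $\kappa=\alpha+\beta$ and $\lambda = \alpha-\beta$, and let
$$T = \Delta_k^+ + \alpha\, \Delta_k^- - (k+1)\left(\alpha - \frac{k}{k+1}\right) I .$$
The operators $\Delta_k^+$ and $\Delta_k^-$ are self-adjoint (they are of the form $d^*d$ and $dd^*$ respectively), so $T$ is self-adjoint and hence $\Vert T \Vert = \sup_{\Vert \phi \Vert =1} \vert \langle T\phi,\phi\rangle \vert$. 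Thus it suffices to show that for every $\phi \in C^k(X,\mathbb{R})$,
$$-(k+1)\beta \,\Vert \phi \Vert^2 \;\leq\; \langle T\phi,\phi\rangle \;\leq\; (k+1)\beta \,\Vert \phi \Vert^2 .$$

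For the upper bound I would plug the upper estimate from Lemma \ref{SpecGapLocalToGlobal1}, namely
$$\langle \Delta_k^+ \phi,\phi\rangle \leq (k+1)\Vert \phi \Vert^2 \left(\kappa - \tfrac{k}{k+1}\right) - \kappa \,\langle \Delta_k^- \phi,\phi\rangle ,$$
into the definition of $T$. After substituting $\kappa=\alpha+\beta$ and collecting terms, the $\alpha$ pieces and the $\tfrac{k}{k+1}$ pieces cancel and one is left with
$$\langle T\phi,\phi\rangle \;\leq\; \beta \left[(k+1)\Vert \phi\Vert^2 - \langle \Delta_k^- \phi,\phi\rangle\right] .$$
Since $\Delta_k^- = d\delta$ is positive, $\langle \Delta_k^- \phi, \phi\rangle \geq 0$, which gives the upper bound $(k+1)\beta \Vert \phi \Vert^2$.

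The lower bound is symmetric: using the lower inequality of Lemma \ref{SpecGapLocalToGlobal1} with $\lambda = \alpha - \beta$ and simplifying in the same way yields
$$\langle T\phi,\phi\rangle \;\geq\; \beta\left[\langle \Delta_k^- \phi,\phi\rangle - (k+1)\Vert \phi\Vert^2\right],$$
which by positivity of $\Delta_k^-$ is at least $-(k+1)\beta\Vert \phi\Vert^2$. Combining both inequalities yields $\vert \langle T\phi,\phi\rangle \vert \leq (k+1)\beta \Vert \phi \Vert^2$, hence $\Vert T \Vert \leq (k+1)\beta$, which is exactly the claim. The only substantive input is Lemma \ref{SpecGapLocalToGlobal1}; the rest is algebra and the positivity of $\Delta_k^-$. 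I do not expect a real obstacle here, only bookkeeping of the constants $\alpha,\beta,\tfrac{k}{k+1}$, and the key conceptual point is that the two-sided sandwich from the lemma is exactly what is needed to translate a spectral interval on $\Delta_k^+$ (modulo $\Delta_k^-$) into an operator norm bound.
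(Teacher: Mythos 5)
Your proposal is correct and follows essentially the same route as the paper: the paper's proof likewise deduces the two-sided quadratic-form bound $\vert\langle T\phi,\phi\rangle\vert\leq (k+1)\frac{\kappa-\lambda}{2}\Vert\phi\Vert^2$ from Lemma \ref{SpecGapLocalToGlobal1} and then invokes self-adjointness of $T$ to convert it into the operator-norm bound. Your write-up is in fact slightly more careful than the paper's, since you make explicit the use of positivity of $\Delta_k^-$ needed to discard the term $\beta\langle\Delta_k^-\phi,\phi\rangle$, a step the paper leaves implicit.
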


\begin{proof}
By Lemma \ref{SpecGapLocalToGlobal1}, 
$$(k+1) \dfrac{\kappa - \lambda}{2} \geq \langle (\Delta_k^+ + \dfrac{\lambda + \kappa}{2} \Delta_k^- - (k+1) (\dfrac{\lambda + \kappa}{2} - \dfrac{k}{k+1}) I) \phi, \phi \rangle \geq 
-(k+1) \dfrac{\kappa - \lambda}{2}.$$
Note that $\Delta_k^+ + \frac{\lambda + \kappa}{2} \Delta_k^- - (k+1) (\frac{\lambda + \kappa}{2} - \frac{k}{k+1}) I $ is a self adjoint operator and therefore the above inequality yields the needed inequality.
\end{proof}

\subsection{Localization in partite complexes} 
\label{Localization in partite complexes subsection}
In the case of partite complexes applying Corollary \ref{norm bound - local to global} can be rather uninformative, because the largest eigenvalue in the links can be quite large. Consider for instance the case of $1$-dimensional links, which are bipartite graphs and therefore the largest eigenvalue of the graph Laplacian in those links is $2$. This phenomena of large eigenvalue generalize to every link of a partite complex (see Proposition \ref{upper bound of the spectrum in partite complexes - prop} below). The idea below is to derive a result similar to Corollary \ref{norm bound - local to global}, that does not consider the high end of the spectrum in partite complexes.

Let $X$ be a pure $n$-dimensional, weighted, $(n+1)$-partite simplicial complex with sides $S_0,...,S_n$. Notice that for any $-1 \leq k \leq n-1$, $X_\tau$ is a $(n-k)$-partite complex. In order to keep the indexing of the sides consistent, we shall denote as follows: for $\tau = (v_0,...,v_k), v_i \in S_{j_i} $, the sides of $X_\tau$ will be denoted by $S_{\tau,j}$, where $j \neq j_0,...,j_k$ and $S_{\tau, j} \subseteq S_{j}$.

This will allow us to define $d_{\tau, (l,j)}, \delta_{\tau, (l,j)}$ on $X_\tau$ for $-1 \leq l \leq n-k-1$ in the following way: if $\tau = (v_0,...,v_k), v_i \in S_{j_i} $, then for $j \neq  j_0,...,j_k$, define $d_{\tau, (l,j)}, \delta_{\tau, (l,j)}$ as above (using the indexing on $X_\tau$). If $j = j_i$ for some $0 \leq i \leq k$, then define $d_{\tau, (l,j)} \equiv 0, \delta_{\tau, (l,j)} \equiv 0$. Denote $\Delta^-_{\tau, (l,j)} = d_{\tau, (l-1,j)} \delta_{\tau, (l-1,j)}$.  

After setting these conventions, we will show the following: 
\begin{proposition}
\label{partite localization of Delta^-}
Let $X$ be a pure $n$-dimensional, weighted, $(n+1)$-partite simplicial complex. Then for every $\phi \in  C^{k} (X,\mathbb{R} )$ and every $0 \leq j \leq n$, we have that
$$k! \langle \Delta^-_{(k,j)} \phi, \phi \rangle = \sum_{\tau \in \Sigma (k-1)} \langle \Delta^-_{\tau, (0,j)} \phi_\tau, \phi_\tau \rangle .$$ 
\end{proposition}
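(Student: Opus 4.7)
My plan is to reduce both sides to squared norms of the partite codifferential applied to $\phi$ and to its localizations respectively, and then verify directly that these match pointwise weighted by $m(\tau)$.

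First I would record the following two identities, both immediate from self-adjointness: since $\Delta^-_{(k,j)} = d_{(k-1,j)} \delta_{(k-1,j)}$ and $d_{(k-1,j)}^* = \delta_{(k-1,j)}$ we have
\[
\langle \Delta^-_{(k,j)} \phi, \phi \rangle = \Vert \delta_{(k-1,j)} \phi \Vert^2 ,
\]
and likewise $\langle \Delta^-_{\tau,(0,j)} \phi_\tau, \phi_\tau \rangle = \Vert \delta_{\tau,(-1,j)} \phi_\tau \Vert^2$. So the claim reduces to
\[
k! \Vert \delta_{(k-1,j)} \phi \Vert^2 = \sum_{\tau \in \Sigma(k-1)} \Vert \delta_{\tau,(-1,j)} \phi_\tau \Vert^2 .
\]

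Next I would unpack the left-hand side: using the inner product on $C^{k-1}(X,\mathbb{R})$, one gets $k! \Vert \delta_{(k-1,j)} \phi \Vert^2 = \sum_{\tau \in \Sigma(k-1)} m(\tau) (\delta_{(k-1,j)} \phi(\tau))^2$. For the right-hand side, fix $\tau \in \Sigma(k-1)$. Since $C^{-1}(X_\tau,\mathbb{R})$ is one-dimensional with squared norm $m_\tau(\emptyset) = m(\tau)$, I have $\Vert \delta_{\tau,(-1,j)} \phi_\tau \Vert^2 = m(\tau) (\delta_{\tau,(-1,j)} \phi_\tau(\emptyset))^2$. So it suffices to prove the pointwise identity
\[
\delta_{\tau,(-1,j)} \phi_\tau(\emptyset) = \delta_{(k-1,j)} \phi(\tau).
\]

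To verify this, I would split into the two cases prescribed by the convention in this subsection. If $\tau = (v_0,\dots,v_{k-1})$ has some $v_i \in S_j$, then by definition $\delta_{\tau,(-1,j)} \equiv 0$, and on the other side, for any $v \in S_j$ the tuple $v\tau$ would have two vertices in $S_j$, so $v\tau \notin \Sigma(k)$ by partiteness; the sum defining $\delta_{(k-1,j)} \phi(\tau)$ is thus empty and the identity is $0 = 0$. Otherwise, $S_{\tau,j} = \lbrace v \in S_j : v\tau \in \Sigma(k) \rbrace$, and using $m_\tau(v) = m(v\tau)$, $m_\tau(\emptyset) = m(\tau)$, and $\phi_\tau(v) = \phi(v\tau)$, the formula for $\delta_{\tau,(-1,j)}$ from the definition gives exactly
\[
\delta_{\tau,(-1,j)} \phi_\tau(\emptyset) = \sum_{v \in S_{\tau,j}} \frac{m_\tau(v)}{m_\tau(\emptyset)} \phi_\tau(v) = \sum_{v \in S_j,\, v\tau \in \Sigma(k)} \frac{m(v\tau)}{m(\tau)} \phi(v\tau) = \delta_{(k-1,j)} \phi(\tau),
\]
matching the preceding proposition's formula for $\delta_{(k-1,j)}$. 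Summing over $\tau \in \Sigma(k-1)$ yields the claim.

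There is no genuine obstacle here; the whole content is bookkeeping with the definitions of localization, the partite codifferential, and the induced weight $m_\tau$. The only place where care is needed is handling the partite sign/zero conventions — making sure that the link definition $\delta_{\tau,(l,j)} \equiv 0$ whenever $j$ already appears in $\tau$ matches the automatic vanishing of $\delta_{(k-1,j)}\phi(\tau)$ that the partiteness of $X$ forces on the global side.
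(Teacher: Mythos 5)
Your proposal is correct and follows essentially the same route as the paper: both sides are rewritten as squared norms of the partite codifferential, expanded as a weighted sum over $\tau \in \Sigma(k-1)$, and matched via the pointwise identity $\delta_{\tau,(-1,j)}\phi_\tau(\emptyset) = \delta_{(k-1,j)}\phi(\tau)$ using $m_\tau(\emptyset) = m(\tau)$, $m_\tau(v) = m(v\tau)$ and $\phi_\tau(v)=\phi(v\tau)$. Your explicit case analysis for $\tau$ already meeting $S_j$ is a point the paper leaves implicit (the global sum is then empty by partiteness), but it is the same argument.
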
 
 
\begin{proof}
Let $\phi \in  C^{k} (X,\mathbb{R} )$, then by definition
\begin{dmath*}
k! \langle \Delta^-_{(k,j)} \phi, \phi \rangle = k! \langle \delta^-_{(k,j)} \phi, \delta^-_{(k,j)} \phi \rangle = \sum_{\tau \in \Sigma (k-1)} m (\tau) \left( \sum_{v \in S_j, v \tau \in \Sigma (k+1)} \dfrac{m(v \tau)}{m(\tau)} \phi (v \tau) \right)^2 = \sum_{\tau \in \Sigma (k-1)} m_\tau (\emptyset) \left( \sum_{v \in S_j, v  \in \Sigma_\tau (0)} \dfrac{m_{\tau} (v)}{m_{\tau} (\emptyset)} \phi_\tau (v) \right)^2 = \sum_{\tau \in \Sigma (k-1)} \Vert \delta_{\tau, (0,j)} \phi_\tau \Vert^2 = \sum_{\tau \in \Sigma (k-1)} \langle \Delta^-_{\tau, (0,j)} \phi_\tau, \phi_\tau \rangle .
\end{dmath*}
\end{proof} 

The next theorem is the $(n+1)$-partite analogue of Corollary \ref{norm bound - local to global}:
\begin{theorem}
\label{Contraction in partite case thm}
Let $X$  be a pure $n$-dimensional, $(n+1)$-partite, weighted simplicial complex such that all the links of $X$ of dimension $>0$ are connected. Fix $0 \leq k \leq n-1$, if there are $\kappa \geq \lambda > \frac{k}{k+1}$ such that 
$$\bigcup_{\tau \in \Sigma (k-1)} Spec (\Delta_{\tau, 0}^+) \setminus \lbrace 0, \frac{n+1-k}{n-k} \rbrace \subseteq [\lambda, \kappa],$$
then 
\begin{dmath*}
\left\Vert \Delta^+_{k} +\frac{n+1-k}{n-k}   \Delta^-_{k} + (k - (k+1)  \dfrac{\lambda + \kappa}{2})  I  - ( \frac{(n+1-k)^2}{n-k} - (n+1-k)^2 \dfrac{\lambda + \kappa}{2} ) \sum_{j=0}^n \Delta^-_{(k,j)}  \right\Vert \leq (k+1) \dfrac{\kappa - \lambda}{2} ,
\end{dmath*}
where $\Vert. \Vert$ denotes the operator norm.
\end{theorem}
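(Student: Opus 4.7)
The argument should mirror the proof of Corollary~\ref{norm bound - local to global}, with side-constant subspaces of the links playing the role of the single-constant space in the non-partite case. The plan is to localize to $0$-forms on the links $X_\tau$ via Garland's method, derive a per-link quadratic-form inequality that explicitly isolates the top eigenvalue $\tfrac{n+1-k}{n-k}$, and sum back. The needed averaging identities --- valid for every $\phi\in C^k(X,\mathbb{R})$, with all sums running over $\tau\in\Sigma(k-1)$ --- are $\sum_\tau\langle\Delta^+_{\tau,0}\phi_\tau,\phi_\tau\rangle = k!\langle\Delta^+_k\phi,\phi\rangle + k!\,k\|\phi\|^2$ (Corollary~\ref{LocalizNorm3}), $\sum_\tau\langle\Delta^-_{\tau,0}\phi_\tau,\phi_\tau\rangle = k!\langle\Delta^-_k\phi,\phi\rangle$ (Lemma~\ref{LocalizNorm1}(2) combined with Proposition~\ref{Delta0Norm}), $\sum_\tau\|\phi_\tau\|^2 = (k+1)!\|\phi\|^2$ (Lemma~\ref{LocalizNorm1}(1)), and $\sum_\tau\langle\Delta^-_{\tau,(0,j)}\phi_\tau,\phi_\tau\rangle = k!\langle\Delta^-_{(k,j)}\phi,\phi\rangle$ (Proposition~\ref{partite localization of Delta^-}).

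For each link $X_\tau$ (pure $(n-k)$-dimensional, $(n+1-k)$-partite) I would establish two structural facts. First, the partite balance $\sum_{v\in S_{\tau,j}}m_\tau(v)=\tfrac{1}{n+1-k}m_\tau(\emptyset)$ makes each $(n+1-k)\Delta^-_{\tau,(0,j)}$ the orthogonal projection onto $\mathbb{R}\mathbf{1}_{S_{\tau,j}}$; since these rank-one images are mutually orthogonal, $P_\tau := (n+1-k)\sum_j\Delta^-_{\tau,(0,j)}$ is the orthogonal projection onto the side-constant subspace $W_\tau\subseteq C^0(X_\tau,\mathbb{R})$. Second, a direct computation --- using $\sum_{w\in S_{\tau,j},\,w\sim v}m_\tau(\{v,w\}) = \tfrac{1}{n-k}m_\tau(v)$ for $v\notin S_{\tau,j}$, itself a consequence of Proposition~\ref{weight in n dim simplices} applied to $X_\tau$ --- shows that $\Delta^+_{\tau,0}$ preserves $W_\tau$ and acts as $\tfrac{n+1-k}{n-k}I$ on $W_\tau\ominus\mathbb{R}\mathbf{1}$. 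Decomposing $\phi_\tau = \phi_\tau^0 + \phi_\tau^{\mathrm{side}} + \phi_\tau^\perp$ along constants, $W_\tau\ominus\mathbb{R}\mathbf{1}$, and $W_\tau^\perp$, and assuming the full $\tfrac{n+1-k}{n-k}$-eigenspace sits inside $W_\tau$, the hypothesis gives $\mathrm{Spec}\bigl(\Delta^+_{\tau,0}|_{W_\tau^\perp}\bigr)\subseteq[\lambda,\kappa]$, so $\bigl|\langle\Delta^+_{\tau,0}\phi_\tau^\perp,\phi_\tau^\perp\rangle - \tfrac{\lambda+\kappa}{2}\|\phi_\tau^\perp\|^2\bigr| \leq \tfrac{\kappa-\lambda}{2}\|\phi_\tau\|^2$. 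Combined with $\langle\Delta^+_{\tau,0}\phi_\tau,\phi_\tau\rangle = \tfrac{n+1-k}{n-k}\|\phi_\tau^{\mathrm{side}}\|^2 + \langle\Delta^+_{\tau,0}\phi_\tau^\perp,\phi_\tau^\perp\rangle$ and the Pythagorean identities $\|\phi_\tau^0\|^2 = \langle\Delta^-_{\tau,0}\phi_\tau,\phi_\tau\rangle$, $\|\phi_\tau^0\|^2 + \|\phi_\tau^{\mathrm{side}}\|^2 = \langle P_\tau\phi_\tau,\phi_\tau\rangle$, and $\|\phi_\tau^\perp\|^2 = \|\phi_\tau\|^2 - \|\phi_\tau^0\|^2 - \|\phi_\tau^{\mathrm{side}}\|^2$, this yields a per-link quadratic-form bound written purely in terms of the local operators.

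Summing over $\tau$ and dividing by $k!$, the four averaging identities reassemble the bound into $|\langle T\phi,\phi\rangle|\leq(k+1)\tfrac{\kappa-\lambda}{2}\|\phi\|^2$ for $T$ the self-adjoint operator appearing in the statement; self-adjointness upgrades this quadratic-form bound to the operator-norm bound. The main obstacle is the structural claim that the entire $\tfrac{n+1-k}{n-k}$-eigenspace of $\Delta^+_{\tau,0}$ lies in $W_\tau$: without it, components of $\phi_\tau^\perp$ could carry the forbidden eigenvalue $\tfrac{n+1-k}{n-k}$ outside $[\lambda,\kappa]$ and the per-link estimate would collapse. This is a genuinely partite phenomenon and is exactly what motivates excluding $\tfrac{n+1-k}{n-k}$ in the hypothesis.
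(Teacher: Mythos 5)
Your proposal is correct and follows essentially the same route as the paper's proof: localize via the Garland identities, identify $(n+1-k)\sum_{j}\Delta^-_{\tau,(0,j)}$ as the orthogonal projection onto the side-constant subspace of $C^0(X_\tau,\mathbb{R})$, exploit that $\Delta^+_{\tau,0}$ acts as $\tfrac{n+1-k}{n-k}I$ there modulo constants, and sum back over $\tau\in\Sigma(k-1)$. The structural claim you flag as the main obstacle --- that the entire $\tfrac{n+1-k}{n-k}$-eigenspace of $\Delta^+_{\tau,0}$ is side-constant --- is exactly Proposition \ref{upper bound of the spectrum in partite complexes - prop} applied to the $(n+1-k)$-partite link $X_\tau$, so it is already available and your argument closes.
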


\begin{proof}
Let $\phi \in C^{k} (X,\mathbb{R} )$, then for every $\tau \in \Sigma (k-1)$, we have that the projection of $\phi_\tau$ on $C^{0} (X_\tau, \mathbb{R})_{nt}$ is
$$\phi_\tau - (n+1-k) \sum_{j=0}^n \Delta^-_{\tau,(0,j)} \phi_\tau = (I -  (n+1-k) \sum_{j=0}^n \Delta^-_{\tau,(0,j)}) \phi_\tau  .$$
Therefore,
\begin{dmath*}
{\left\langle \Delta^+_\tau  \left(I -  (n+1-k) \sum_{j=0}^n \Delta^-_{\tau,(0,j)} \right) \phi_\tau, \phi_\tau \right\rangle} \geq \lambda \left\Vert \left( I -  (n+1-k) \sum_{j=0}^n \Delta^-_{\tau,(0,j)} \right) \phi_\tau \right\Vert^2 = \lambda \left( \Vert \phi_\tau \Vert^2 - (n+1-k)^2 \sum_{j=0}^n \Vert \Delta^-_{\tau,(0,j)} \phi_\tau \Vert^2 \right).
\end{dmath*}
Similarly,
\begin{dmath*}
 \kappa \left( \Vert \phi_\tau \Vert^2 - (n+1-k)^2 \sum_{j=0}^n \Vert \Delta^-_{\tau,(0,j)} \phi_\tau \Vert^2 \right) \\
 \geq \left\langle \Delta^+_\tau  \left(I -  (n+1-k) \sum_{j=0}^n \Delta^-_{\tau,(0,j)} \right) \phi_\tau, \phi_\tau \right\rangle .
 \end{dmath*}
From the fact that $\Delta^-_{\tau,0}$ is the projection on the constant functions on $X_\tau$ we get that 
$$(n+1-k) \sum_{j=0}^n \Delta^-_{\tau,(0,j)} - \Delta^-_{\tau,0},$$
is the projection of the eigenfunctions with eigenvalue $\frac{n+1-k}{n-k}$. Therefore,
$$\Delta^+_{\tau,0} \left( (n+1-k) \sum_{j=0}^n \Delta^-_{\tau,(0,j)} \right)=\frac{n+1-k}{n-k} \left((n+1-k) \sum_{j=0}^n \Delta^-_{\tau,(0,j)} - \Delta^-_{\tau,0} \right) ,$$
which yields
\begin{dmath*}
\Delta^+_{\tau,0} \left(I - (n+1-k) \sum_{j=0}^n \Delta^-_{\tau,(0,j)} \right) = \Delta^+_{\tau,0} +\frac{n+1-k}{n-k}   \Delta^-_{\tau,0}  - \frac{(n+1-k)^2}{n-k} \sum_{j=0}^n \Delta^-_{\tau,(0,j)} .
\end{dmath*}
Therefore we have that
\begin{dmath*}
 \kappa \left( \Vert \phi_\tau \Vert^2 - (n+1-k)^2 \sum_{j=0}^n \Vert \Delta^-_{\tau,(0,j)} \phi_\tau \Vert^2 \right) \geq \\
 \left\langle \left( \Delta^+_{\tau,0} +\frac{n+1-k}{n-k}   \Delta^-_{\tau,0}  - \frac{(n+1-k)^2}{n-k} \sum_{j=0}^n \Delta^-_{\tau,(0,j)} \right) \phi_\tau, \phi_\tau \right\rangle \geq \\
 \lambda \left( \Vert \phi_\tau \Vert^2 - (n+1-k)^2 \sum_{j=0}^n \Vert \Delta^-_{\tau,(0,j)} \phi_\tau \Vert^2 \right). 
 \end{dmath*}
 Summing the above inequalities on all $\tau \in \Sigma (k-1)$ and using the equalities:
 $$(k+1)! \Vert \phi \Vert^2 = \sum_{\tau \in \Sigma(k-1)} \Vert \phi_\tau \Vert^2 ,$$
$$k! \left\langle\Delta^-_k \phi , \phi \right\rangle= \sum_{\tau \in \Sigma(k-1)} \left\langle \Delta^-_{\tau,0} \phi_\tau , \phi_\tau \right\rangle ,$$
$$  k! \langle \Delta^+_k \phi ,  \phi \rangle + k!k \Vert \phi \Vert^2 =  \sum_{\tau \in \Sigma (k-1)} \langle \Delta^+_{\tau,0} \phi_\tau, \phi_\tau \rangle ,$$
$$k! \langle \Delta^-_{(k,j)} \phi, \phi \rangle = \sum_{\tau \in \Sigma (k-1)} \langle \Delta^-_{\tau, (0,j)} \phi_\tau, \phi_\tau \rangle ,$$ 
(see Lemma \ref{LocalizNorm1}, Corollary \ref{LocalizNorm3} and Proposition \ref{partite localization of Delta^-} ), yields (after dividing by $k!$):
\begin{dmath*}
 \kappa \left\langle \left( (k+1) I -  (n+1-k)^2 \sum_{j=0}^n  \Delta^-_{(k,j)} \right) \phi, \phi \right\rangle  \geq 
 \left\langle \left( \Delta^+_{k} +k I +  \frac{n+1-k}{n-k}   \Delta^-_{k}  - \frac{(n+1-k)^2}{n-k} \sum_{j=0}^n \Delta^-_{(k,j)} \right) \phi, \phi \right\rangle \geq 
 \lambda \left\langle \left( (k+1) I -  (n+1-k)^2 \sum_{j=0}^n  \Delta^-_{(k,j)} \right) \phi, \phi \right\rangle . 
 \end{dmath*}
Subtracting 
$$ \dfrac{\lambda + \kappa}{2}  \left\langle \left( (k+1) I -  (n+1-k)^2 \sum_{j=0}^n  \Delta^-_{(k,j)} \right) \phi, \phi \right\rangle ,$$
from the above inequality yields
\begin{dmath*}
\left\vert \left\langle \left( \Delta^+_{k} +\left( k - (k+1)  \dfrac{\lambda + \kappa}{2} \right)  I +  \frac{n+1-k}{n-k}   \Delta^-_{k}  \\
- \left( \frac{(n+1-k)^2}{n-k} - (n+1-k)^2 \dfrac{\lambda + \kappa}{2} \right) \sum_{j=0}^n \Delta^-_{(k,j)} \right) \phi, \phi \right\rangle \right\vert \\
\leq \dfrac{\kappa - \lambda}{2} \left\langle \left( (k+1) I -  (n+1-k)^2 \sum_{j=0}^n  \Delta^-_{(k,j)} \right) \phi, \phi \right\rangle .
\end{dmath*}
This in turn yields 
\begin{dmath*}
\left\Vert \Delta^+_{k} +\frac{n+1-k}{n-k}   \Delta^-_{k} + \left( k - (k+1)  \dfrac{\lambda + \kappa}{2} \right)  I  - \left( \frac{(n+1-k)^2}{n-k} - (n+1-k)^2 \dfrac{\lambda + \kappa}{2} \right) \sum_{j=0}^n \Delta^-_{(k,j)}  \right\Vert \leq (k+1) \dfrac{\kappa - \lambda}{2} .
\end{dmath*}
\end{proof}
 
 \subsection{Restriction}

\begin{definition}
For $\phi \in C^k (X, \mathbb{R})$ and $\tau \in \Sigma (l)$ s.t. $k+l+1 \leq n$, the restriction of $\phi$ to $X_\tau$  is a function $\phi^\tau \in C^k (X_\tau, \mathbb{R})$ defined as follows: 
$$ \forall \sigma \in \Sigma_\tau (k), \phi^\tau (\sigma) = \phi (\sigma).$$
\end{definition}

For $\phi, \psi \in C^k (X, \mathbb{R} )$, one can compute $\langle \phi , \psi \rangle$ using all the restrictions of the form $\phi^\tau, \psi^\tau$. This is described in the following lemma:

\begin{lemma}
\label{restNorm1}
For every $0 \leq k \leq n-1$ let $\phi, \psi \in C^k (X, \mathbb{R} )$ and $0 \leq l \leq n-k-1$. Then
$$  \langle \phi , \psi  \rangle = \sum_{\tau \in \Sigma (l)} \langle \phi^\tau, \psi^\tau \rangle.$$
\end{lemma}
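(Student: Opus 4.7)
The plan is to unpack both sides against the definition of the weighted inner product and then reduce to Corollary \ref{weight in l dim simplices}. By definition, the left-hand side is
$$\langle \phi, \psi \rangle = \sum_{\sigma \in \Sigma(k)} \frac{m(\sigma)}{(k+1)!} \phi(\sigma) \psi(\sigma),$$
while for each $\tau \in \Sigma(l)$ the link $X_\tau$ carries the balanced weight $m_\tau(\sigma) = m(\tau\sigma)$, so
$$\sum_{\tau \in \Sigma(l)} \langle \phi^\tau, \psi^\tau \rangle = \sum_{\tau \in \Sigma(l)} \sum_{\sigma \in \Sigma_\tau(k)} \frac{m(\tau\sigma)}{(k+1)!} \phi(\sigma) \psi(\sigma).$$

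Next I would interchange the order of summation so that the outer sum runs over $\sigma \in \Sigma(k)$ and the inner sum over those $\tau \in \Sigma(l)$ for which $\tau\sigma \in \Sigma(k+l+1)$. The goal is to show that for each fixed $\sigma \in \Sigma(k)$,
$$\sum_{\tau \in \Sigma(l),\, \tau\sigma \in \Sigma(k+l+1)} m(\tau\sigma) = m(\sigma),$$
which would finish the proof by matching the left-hand side term by term.

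To establish that identity I would group the ordered $\tau$'s according to the underlying unordered $(k+l+1)$-simplex $\eta \in X^{(k+l+1)}$ containing the vertex set of $\sigma$. For each such $\eta$, the remaining $l+1$ vertices can be ordered in $(l+1)!$ ways to produce an ordered $\tau$, and each such ordering contributes $m(\tau\sigma) = m(\eta)$. Hence the sum equals $(l+1)! \sum_{\eta \in X^{(k+l+1)},\, \sigma \subseteq \eta} m(\eta)$, and Corollary \ref{weight in l dim simplices} applied with dimensions $k$ and $k+l+1$ evaluates this to $(l+1)! \cdot \tfrac{1}{(l+1)!} m(\sigma) = m(\sigma)$, as needed.

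There is no real obstacle; the only thing one must be careful with is the factorial bookkeeping, namely keeping the ordered-versus-unordered combinatorics straight when converting the sum over ordered $\tau$ into a sum over unordered faces $\eta$ of dimension $k+l+1$. Once the orderings are accounted for, the balanced-weight identity does all the work.
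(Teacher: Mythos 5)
Your proof is correct and follows essentially the same route as the paper's: both expand the right-hand side, interchange the order of summation, and reduce to the balanced-weight identity of Corollary \ref{weight in l dim simplices}. The only (cosmetic) difference is that you pivot directly on $\sigma$ and group the ordered $\tau$'s by their underlying unordered top face $\eta$, whereas the paper routes the double sum through ordered $(k+l+1)$-simplices $\gamma$; the factorial bookkeeping comes out the same either way.
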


\begin{proof}
\begin{align*}
 \sum_{\tau \in \Sigma (l)} \langle \phi^\tau, \psi^\tau \rangle = \sum_{\tau \in \Sigma (l)} \sum_{\sigma \in \Sigma_\tau (k)} \dfrac{m_\tau (\sigma)}{(k+1)!}  \phi^\tau (\sigma)\psi^\tau (\sigma) = \\
 \sum_{\tau \in \Sigma (l)} \dfrac{1}{(k+1)!  }\sum_{\sigma \in \Sigma_\tau (k)} m (\tau \sigma) \phi (\sigma)\psi (\sigma) = \\
  \sum_{\tau \in \Sigma (l)} \dfrac{1}{(k+1)! }\sum_{\gamma \in \Sigma (l+k+1),\tau \subset \gamma} \dfrac{(k+1)!}{(l+k+2)! } m (\gamma) \phi (\gamma - \tau)\psi (\gamma - \tau),
\end{align*}
where $\gamma - \tau$ means deleting the vertices of $\tau$ from $\gamma$. Changing the order of summation gives
\begin{align*}
\sum_{\gamma \in \Sigma (l+k+1)} \dfrac{m(\gamma)}{(l+k+2)!  }\sum_{\tau \in \Sigma (l),\tau \subset \gamma}  \phi (\gamma - \tau) \psi (\gamma - \tau)  = \\
\sum_{\gamma \in \Sigma (l+k+1)} \dfrac{m(\gamma)}{(l+k+2)!  }\sum_{\sigma \in \Sigma (k),\sigma \subset \gamma} \dfrac{(l+1)!}{(k+1)!} \phi (\sigma)\psi (\sigma) = \\
 \sum_{\sigma \in \Sigma (k)} \dfrac{(l+1)! \phi (\sigma)\psi (\sigma)}{(l+k+2)!(k+1)! } \sum_{\gamma \in \Sigma (l+k+1),\sigma \subset \gamma} m(\gamma) 
 \end{align*}
 Recall that by Corollary \ref{weight in l dim simplices} we have that 
 $$\sum_{\gamma \in \Sigma (l+k+1),\sigma \subset \gamma} m(\gamma)  = \dfrac{(l+k+2)!}{(l+1)!}  m (\sigma ) .$$
 Therefore we get
\begin{dmath*}
\sum_{\sigma \in \Sigma (k)} \dfrac{(l+1)! \phi (\sigma) \psi (\sigma)}{(l+k+2)!(k+1)! } \sum_{\gamma \in \Sigma (l+k+1),\sigma \subset \gamma} m(\gamma)  = 
 \sum_{\sigma \in \Sigma (k)} \dfrac{ m (\sigma ) }{(k+1)! } \phi (\sigma) \psi (\sigma) = \langle \phi, \psi \rangle .
\end{dmath*}
\end{proof}

\begin{lemma}
\label{restNorm2}
Assume that $X$ is of dimension $>1$. Let $\phi, \psi \in C^0 (X,\mathbb{R})$ and $0 \leq l \leq n-1$, then
$$  \langle d \phi, d \psi  \rangle = \sum_{\tau \in \Sigma (l)} \langle d_\tau \phi^\tau , d_\tau \psi^\tau \rangle,$$
where $d_\tau$ is the restriction of $d$ to the link of $\tau$.
\end{lemma}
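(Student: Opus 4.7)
The plan is to reduce the statement to Lemma \ref{restNorm1} applied at level $k=1$ by showing that restriction commutes with the differential on $0$-forms.

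First, I would verify the key pointwise identity: for any $\phi \in C^0(X,\mathbb{R})$, any $\tau \in \Sigma(l)$ with $l+2 \leq n$, and any $(v_0,v_1) \in \Sigma_\tau(1)$, one has
$$(d\phi)^\tau((v_0,v_1)) = (d\phi)((v_0,v_1)) = \phi(v_1)-\phi(v_0) = \phi^\tau(v_1)-\phi^\tau(v_0) = (d_\tau \phi^\tau)((v_0,v_1)).$$
Hence $(d\phi)^\tau = d_\tau \phi^\tau$ as elements of $C^1(X_\tau, \mathbb{R})$, and similarly for $\psi$. This commutativity is essentially tautological because both differentials are given by the same alternating-sum formula and restriction is simply evaluation on ordered simplices of the link.

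Once this is in hand, I would apply Lemma \ref{restNorm1} with $k=1$ to the pair of $1$-forms $d\phi, d\psi \in C^1(X,\mathbb{R})$. Since $l \leq n-1$ in the present statement and the range in Lemma \ref{restNorm1} at $k=1$ is $0 \leq l \leq n-2$, the lemma gives
$$\langle d\phi, d\psi\rangle = \sum_{\tau\in\Sigma(l)} \langle (d\phi)^\tau, (d\psi)^\tau\rangle,$$
and substituting the commutation identity above yields $\langle d\phi, d\psi\rangle = \sum_{\tau\in\Sigma(l)} \langle d_\tau \phi^\tau, d_\tau \psi^\tau\rangle$, which is the claim. (When $l = n-1$ the link $X_\tau$ is $0$-dimensional so $d_\tau \phi^\tau$ lives in the trivial space $C^1(X_\tau,\mathbb{R})=0$; in this degenerate extremum the right-hand side vanishes identically, so effectively the non-trivial content of the lemma occurs for $l \leq n-2$, which is exactly the range handled by the preceding argument.)

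There is no real obstacle here. The only thing to be careful about is the bookkeeping of the index ranges in Lemma \ref{restNorm1}, and keeping track that the object $(d\phi)^\tau$ is a $1$-cochain on $X_\tau$ rather than a $0$-cochain. The inner product identity we invoke from Lemma \ref{restNorm1} already carries the correct weight factors $m_\tau(\sigma)/(k+1)!$, so no additional weight computation is needed.
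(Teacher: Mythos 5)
Your proof is correct and is essentially identical to the paper's: the paper likewise verifies the pointwise identity $(d\phi)^\tau = d_\tau\phi^\tau$ (restriction commutes with $d$ on $0$-forms) and then invokes Lemma \ref{restNorm1} for the $1$-forms $d\phi,d\psi$. Your parenthetical about the boundary case $l=n-1$ is well taken --- there the right-hand side vanishes while the left-hand side need not, and Lemma \ref{restNorm1} at $k=1$ only covers $l\le n-2$, a range restriction the paper's statement and proof silently gloss over.
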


\begin{proof}
Note that 
$$ \forall (v_0,v_1) \in \Sigma_\tau (1), d_\tau \phi^\tau ((v_0,v_1)) = \phi (v_0)-\phi (v_1) = d \phi ((v_0,v_1)) = (d \phi)^\tau ((v_0,v_1)),  $$
and similarly
$$ \forall (v_0,v_1) \in \Sigma_\tau (1), d_\tau \psi^\tau ((v_0,v_1)) = (d \phi)^\tau ((v_0,v_1)) .$$
Therefore $d_\tau (\phi^\tau) = (d \phi)^\tau, d_\tau (\psi^\tau) = (d \psi)^\tau$  and the lemma follows from the previous one. 
\end{proof}

\subsection{Connectivity of links}

Throughout this paper, we will assume that the $1$-skeleton of $X$ and the $1$-skeletons of all the links of $X$ of dimension $>0$ are connected. We show that this implies that $X$ has strong connectivity properties, namely we shall show that $X$ is gallery connected (see definition below).

\begin{definition}
A pure $n$-dimensional simplicial complex is called gallery connected, if for every two vertices $u ,v \in X^{(0)}$ there is a sequence of simplexes $\sigma_0 ,...,\sigma_l \in X^{(n)}$ such that $u \in \sigma_0, v \in \sigma_l$ and for every $0 \leq i \leq l-1$, we have that $\sigma_i \cap \sigma_{i+1} \in X^{(n-1)}$.
\end{definition}

\begin{proposition}
\label{gallery connectedness}
Let $X$ be a pure $n$-dimensional simplicial complex, such that the $1$-skeleton of $X$ is connected. If the $1$-skeletons of all the links of $X$ of dimension $>0$ are connected, then $X$ is gallery connected. 
\end{proposition}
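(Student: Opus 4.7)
The proof proceeds by induction on $n$. The base case $n=1$ is immediate: here $X$ is a graph, $X^{(n-1)} = X^{(0)}$, and a path of edges connecting $u$ to $v$ (which exists by connectedness of the $1$-skeleton) is already a valid gallery, since consecutive edges share the common vertex of the path.

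For the inductive step the natural approach is to prove the slightly stronger statement: any two top-dimensional simplices $\sigma, \sigma' \in X^{(n)}$ are connected by a gallery starting at $\sigma$ and ending at $\sigma'$. The proposition follows at once by choosing (via pureness) some $\sigma \ni u$ and $\sigma' \ni v$ and applying the strengthening. To prove the strengthening I would first handle the case where $\sigma$ and $\sigma'$ share a vertex $w$. Then $\sigma \setminus \{w\}$ and $\sigma' \setminus \{w\}$ are top-dimensional simplices of $X_w$, which is pure of dimension $n-1$. The link $X_w$ satisfies the inductive hypothesis because its $1$-skeleton is connected by assumption, and for any $\rho \in X_w$ one has $(X_w)_\rho = X_{\{w\} \cup \rho}$, so positive-dimensional links of $X_w$ are positive-dimensional links of $X$ and therefore have connected $1$-skeletons. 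Invoking the strengthened inductive hypothesis inside $X_w$ produces a gallery $\rho_0 = \sigma \setminus \{w\}, \ldots, \rho_m = \sigma' \setminus \{w\}$ in $X_w^{(n-1)}$ with $\rho_i \cap \rho_{i+1} \in X_w^{(n-2)}$; adjoining $w$ throughout, $\{w\} \cup \rho_0, \ldots, \{w\} \cup \rho_m$ is a gallery in $X^{(n)}$ from $\sigma$ to $\sigma'$.

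For the general case, fix $u \in \sigma$, $v \in \sigma'$ and use connectedness of the $1$-skeleton of $X$ to pick a path $u = u_0, u_1, \ldots, u_k = v$ of edges. By pureness, choose $\tau_i \in X^{(n)}$ containing $\{u_i, u_{i+1}\}$ for $0 \le i \le k-1$; then $\sigma$ and $\tau_0$ share $u_0$, consecutive $\tau_{i-1}$ and $\tau_i$ share $u_i$, and $\tau_{k-1}$ and $\sigma'$ share $u_k$, so applying the shared-vertex case to each consecutive pair and concatenating the resulting galleries produces the required gallery from $\sigma$ to $\sigma'$. The only point demanding a moment of care is the verification that $X_w$ inherits the hypotheses, which reduces to the identity $(X_w)_\rho = X_{\{w\} \cup \rho}$ for links of links; beyond this bookkeeping I do not anticipate any serious obstacle.
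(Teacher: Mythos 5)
Your argument is correct, but it is organized quite differently from the paper's. The paper also inducts on $n$, but it descends through the \emph{$(n-1)$-skeleton}: it applies the inductive hypothesis to the $(n-1)$-skeleton of $X$ to produce a chain $\tau_0,\dots,\tau_l$ of $(n-1)$-simplices joining $u$ to $v$ with $\tau_i\cap\tau_{i+1}\in X^{(n-2)}$, lifts each $\tau_i$ to an $n$-simplex, and then bridges consecutive lifts by walking along a path in the $1$-dimensional link $X_{\tau_i\cap\tau_{i+1}}$, each edge of that path spanning an $n$-simplex together with $\tau_i\cap\tau_{i+1}$. You instead descend through \emph{vertex links}: you strengthen the statement to gallery-connectivity of arbitrary pairs of top simplices, reduce via a path in the $1$-skeleton to the case where $\sigma$ and $\sigma'$ share a vertex $w$, and then apply the strengthened hypothesis to $X_w$ using $(X_w)_\rho = X_{\{w\}\cup\rho}$. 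Both routes are sound. Your version has the structural advantage of proving the stronger and more natural statement (any two facets are gallery-connected, not merely some facet through $u$ to some facet through $v$), and the link-of-links identity makes the inheritance of hypotheses transparent; the paper's version only ever invokes connectivity of the codimension-two (i.e.\ $1$-dimensional) links in the bridging step, which fits its overall theme of pushing everything down to the $1$-dimensional links. The one small bookkeeping item you should make explicit is the base case of the \emph{strengthened} statement for $n=1$: given two edges $\sigma,\sigma'$, a path between a vertex of $\sigma$ and a vertex of $\sigma'$, with $\sigma$ prepended and $\sigma'$ appended, is the required gallery; this is immediate but it is what the induction actually rests on.
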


\begin{proof}
We shall prove the by induction on $n$. For $n=1$, $X$ is a graph, therefore, $X$ being gallery connected is the same as $X$ being connected. Assume the proposition holds for $n-1$. Let  $X$ be a pure $n$-dimensional simplicial complex, such that the $1$-skeleton of $X$ is connected. If the $1$-skeletons of all the links of $X$ of dimension $>0$ are connected. Then the $(n-1)$-skeleton of $X$ is a pure $(n-1)$-simplicial complex, such that the $1$-skeletons of all the links are connected. Therefore, by the induction assumption, for every $u,v \in  X^{(0)}$, there are $\tau_0,...,\tau_l \in X^{(n-1)}$  such that $u \in \tau_0, v \in \tau_l$ and for every $0 \leq i \leq l-1$, $\tau_i \cap \tau_{i+1} \in X^{(n-2)}$. $X$ is pure $n$-dimensional, therefore we can take $\sigma_i \in X^{(n)}$ such that for every $0 \leq i \leq l$, $\tau_i \subset \sigma_i$. If $l=0$ there is nothing to prove. Assume $l>0$,  to finish, we shall show that for every $0 \leq i \leq l-1$ there is a gallery connecting $\sigma_i$ and $\sigma_{i+1}$ (and therefore one can take a concatenation of those galleries). Fix $0 \leq i \leq l-1$. Denote $\eta = \tau_i \cap \tau_{i+1} \in X^{(n-2)}, v' = \tau_i \setminus \eta, v'' = \tau_{i+1} \setminus \eta$. By our assumptions $X_\eta$ is connected, therefore there are $v_1,...,v_k \in X_\eta^{(0)}$ such that 
$$\lbrace v' , v_1 \rbrace, \lbrace v_1,v_2 \rbrace,...,\lbrace v_k,v'' \rbrace \in X_\eta^{(1)} .$$
Denote 
$$\sigma_0' = \eta \cup \lbrace v', v_1 \rbrace, \sigma_1' = \eta \cup \lbrace v_1, v_2 \rbrace, ...,\sigma_k' =  \eta \cup \lbrace v_k, v'' \rbrace .$$
Note that $\sigma_0',...,\sigma_k' \in X^{(n)}$ and that 
$$\forall 0 \leq i \leq k-1, \sigma_i' \cap \sigma_{i+1}' = \eta \cup \lbrace v_i \rbrace \in X^{(n-1)}.$$
Also note that
$$\tau_i \subseteq \sigma_i \cap  \sigma_0',  \tau_{i+1} \subseteq \sigma_{i+1} \cap  \sigma_k'   .$$
Therefore there is a gallery connecting $\sigma_i$ and $\sigma_{i+1}$ and we are done.

\end{proof}

\section{Spectral gaps of links}

In this section we will show that a large spectral gap on the graph Laplacian (i.e., the upper Laplacian) on all the $1$-dimensional links induces spectral gaps in all the (weighted) graph Laplacians in all the other links (including the $1$-skeleton of $X$ which is the link of the empty set).

We shall show that spectral gaps of the graph Laplacians ``trickle down'' through links of simplices of different dimension. Specifically, we shall show the following:
\begin{lemma}
\label{SpectralGapDescent1}
Let $X$ as before, i.e., a pure $n$-dimensional weighted simplicial complex such that all the links of $X$ of dimension $>0$ are connected. Also, assume that $n>1$. For $0 \leq k \leq n-2$, if there are $\kappa \geq \lambda >0$ such that
$$\bigcup_{\sigma \in \Sigma (k)} Spec (\Delta_{\sigma, 0}^+) \setminus \lbrace 0 \rbrace \subseteq [\lambda, \kappa],$$
then
$$\bigcup_{\tau \in \Sigma (k-1)} Spec (\Delta_{\tau, 0}^+) \setminus \lbrace 0 \rbrace  \subseteq \left[2 - \dfrac{1}{\lambda}, 2 - \dfrac{1}{\kappa} \right].$$ 
 
\end{lemma}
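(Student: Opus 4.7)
My plan is to reduce everything to Garland's method (Lemma \ref{SpecGapLocalToGlobal1}) applied \emph{inside} each link $X_\tau$, evaluated on one-forms that happen to be exact. The pivotal bookkeeping step is a re-indexing: the hypothesis controls $\Delta^+_{\sigma,0}$ for $\sigma\in\Sigma(k)$, and for $\tau\in\Sigma(k-1)$ with $\sigma=\tau v$, the link $X_\sigma$ coincides with the link of the vertex $v$ inside $X_\tau$. So the hypothesis on $\Delta^+_{\sigma,0}$ for $\sigma\in\Sigma(k)$ is exactly the hypothesis on graph Laplacians of links of vertices in $X_\tau$, which is what Lemma \ref{SpecGapLocalToGlobal1} inside $X_\tau$ at index $k=1$ needs as input.

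First, I would fix $\tau\in\Sigma(k-1)$ and note $\dim X_\tau=n-k\ge 2$ (since $k\le n-2$), which is required so that Lemma \ref{SpecGapLocalToGlobal1} is applicable inside $X_\tau$ at the degree $1$. Feeding in the hypothesis yields, for every $\phi\in C^1(X_\tau,\mathbb{R})$,
$$(2\kappa-1)\|\phi\|^2-\kappa\|\delta_\tau\phi\|^2\ \ge\ \|d_\tau\phi\|^2\ \ge\ (2\lambda-1)\|\phi\|^2-\lambda\|\delta_\tau\phi\|^2.$$

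Next, I would specialize to $\phi=d_\tau\psi$ for $\psi\in C^0(X_\tau,\mathbb{R})$. Then $d_\tau\phi=0$ (since $d_\tau^{\,2}=0$), while $\|\phi\|^2=\langle\Delta^+_{\tau,0}\psi,\psi\rangle$ and $\delta_\tau\phi=\Delta^+_{\tau,0}\psi$, so $\|\delta_\tau\phi\|^2=\|\Delta^+_{\tau,0}\psi\|^2$. The displayed inequality collapses to
$$\Bigl(2-\tfrac{1}{\kappa}\Bigr)\langle\Delta^+_{\tau,0}\psi,\psi\rangle\ \ge\ \|\Delta^+_{\tau,0}\psi\|^2\ \ge\ \Bigl(2-\tfrac{1}{\lambda}\Bigr)\langle\Delta^+_{\tau,0}\psi,\psi\rangle.$$
Taking $\psi$ to be an eigenvector of the self-adjoint operator $\Delta^+_{\tau,0}$ with nonzero eigenvalue $\mu$ and dividing by $\mu\|\psi\|^2>0$ gives $2-1/\lambda\le\mu\le 2-1/\kappa$, which is the desired containment.

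I do not expect a serious obstacle. The only nontrivial checks are the dimension condition $n-k\ge 2$, which is exactly $k\le n-2$, and the identification $(X_\tau)_v=X_{\tau v}$ that transports the hypothesis on $\Sigma(k)$-simplices of $X$ into a hypothesis on vertices of $X_\tau$. Everything else is mechanical: Garland's inequality becomes a pointwise Rayleigh-type inequality for $\Delta^+_{\tau,0}$ once evaluated on exact forms, and then the quadratic inequality $\mu^2\ge(2-1/\lambda)\mu$ for a positive eigenvalue is immediate.
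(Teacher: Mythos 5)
Your proof is correct. The re-indexing $(X_\tau)_v = X_{\tau v}$ (with matching weights, since $(m_\tau)_v(\sigma)=m(\tau v\sigma)=m_{\tau v}(\sigma)$) legitimately transports the hypothesis on $\Sigma(k)$ into the hypothesis of Lemma \ref{SpecGapLocalToGlobal1} at degree $1$ inside $X_\tau$, the dimension count $\dim X_\tau=n-k\ge 2$ is right, and the specialization to $\phi=d_\tau\psi$ with the identities $d_\tau\phi=0$, $\|\phi\|^2=\langle\Delta^+_{\tau,0}\psi,\psi\rangle$, $\delta_\tau\phi=\Delta^+_{\tau,0}\psi$ yields exactly the two one-sided bounds on $\mu$. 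There is no circularity, since Lemma \ref{SpecGapLocalToGlobal1} is established independently of the descent lemma. Your route is, however, genuinely different from the paper's: the paper argues directly with an eigenfunction $\phi$ of $\Delta^+_{\tau,0}$, using the \emph{restriction} maps $\phi\mapsto\phi^v$ and Lemmas \ref{restNorm1}--\ref{restNorm2} to write $\mu\|\phi\|^2=\sum_v\|d_{\tau v}\phi^v\|^2$, and then computes $\Delta^-_{\tau v}\phi^v=(1-\mu)\phi(v)$ from the eigenvalue equation to obtain $\sum_v\|(\phi^v)^1\|^2=\mu(2-\mu)\|\phi\|^2$, from which $\kappa(2-\mu)\ge 1\ge\lambda(2-\mu)$ follows. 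You instead reuse the \emph{localization} machinery (Garland's lemma) as a black box and evaluate it on exact $1$-forms; the two are secretly the same computation, since the localization $(d_\tau\psi)_v(u)=\psi(u)-\psi(v)$ differs from the restriction $\psi^v$ by a constant, but your packaging is shorter and makes clear that spectral descent is just the degree-$1$ Garland inequality restricted to $\im(d_\tau)$, while the paper's version is self-contained at degree $0$ and displays the quadratic factor $\mu(2-\mu)$ explicitly (which it reuses later, e.g.\ in the strict-inequality variant of Proposition \ref{kappa = 2 bound}).
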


\begin{proof}

Fix some $\tau \in \Sigma (k-1)$. First note that
$$\bigcup_{v \in \Sigma_\tau (0)} Spec (\Delta_{\tau v, 0}^+) \setminus \lbrace 0 \rbrace \subseteq \bigcup_{\sigma \in \Sigma (k)} Spec (\Delta_{\sigma, 0}^+) \setminus \lbrace 0 \rbrace \subseteq [\lambda, \kappa].$$
For every $v \in \Sigma_\tau (0)$ and recall that $\Delta_{\tau v}^- \phi^v$ is the projection of $\phi^v$ to the space of constant maps on $X_{\tau v}$. Denote by $( \phi^v)^1$ the orthogonal compliment of that projection. \\
Since $X_{\tau v}$ is connected for every $v \in \Sigma_\tau (0)$, the kernel of $\Delta^+_{\tau v}$ is the space of constant maps. Therefore for every  $v \in \Sigma_\tau (0)$ we have that
$$\kappa \Vert (\phi^v)^1 \Vert^2 \geq \Vert d_{\tau v} \phi^v \Vert^2 \geq \lambda \Vert (\phi^v)^1 \Vert^2 .$$

Take $\phi \in  C^0 (X_\tau, \mathbb{R})$ to be a non constant eigenfunction of $\Delta^+_\tau$ with the eigenvalue $\mu >0$ (recall that $X_\tau$ is connected so the kernel of $\Delta^+_\tau$ is the space of constant functions) , i.e., 
$$\Delta_\tau^+ \phi (u) = \mu \phi (u).$$ 
By Lemma \ref{restNorm2} we have
$$\mu \Vert \phi \Vert^2 =  \Vert d_\tau \phi \Vert^2 = \sum_{v \in \Sigma_\tau (0)} \Vert d_{\tau v} \phi^v \Vert^2.$$
Combined with the above inequalities this yields:
\begin{equation}
\label{ineq}  
\kappa \sum_{v \in \Sigma_\tau (0)}  \Vert (\phi^v)^1 \Vert^2 \geq  \mu \Vert \phi \Vert^2 \geq \lambda \sum_{v \in \Sigma_\tau (0)}  \Vert (\phi^v)^1 \Vert^2.\end{equation}
Next, we shall compute $\sum_{v \in \Sigma_\tau (0)}  \Vert (\phi^v)^1 \Vert^2 $. Note that
$$\Vert (\phi^v)^1 \Vert^2 = \Vert (\phi^v) \Vert^2 - \Vert \Delta_{\tau v}^- \phi^v \Vert^2.$$  
By Lemma \ref{restNorm1} we have that 
$$\sum_{v \in \Sigma_\tau (0)}  \Vert (\phi^v) \Vert^2 = \Vert \phi \Vert^2,$$
and therefore we need only to compute $\sum_{v \in \Sigma_\tau (0)}  \Vert \Delta_{\tau v}^- \phi^v \Vert^2 $. First, let us write $\Delta_{\tau v}^- \phi^v$ explicitly: 
$$\Delta_{\tau v}^- \phi^v \equiv \dfrac{1}{m_{\tau v} (\emptyset )} \sum_{u \in \Sigma_{\tau v} (0) } m_{\tau v} (u) \phi^v (u) = \dfrac{1}{m_{\tau} (v )} \sum_{(v,u)  \in \Sigma_{\tau} (1) } m_{\tau} ((v,u)) \phi (u).$$
Notice that since $\Delta_\tau^+ \phi = \mu \phi$, we get 
$$\mu \phi (v) = \Delta_\tau^+ \phi (v) = \phi (v) - \dfrac{1}{m_\tau (v)} \sum_{(v,u) \in \Sigma_\tau (1)} m_\tau((v,u)) \phi (u) = \phi (v)-  \Delta_{\tau v}^- \phi^v.$$
Therefore
$$\Delta_{\tau v}^- \phi^v = (1 - \mu ) \phi (v).$$
This yields
$$\sum_{v \in \Sigma_\tau (0)}  \Vert \Delta_{\tau v}^- \phi^v \Vert^2 = \sum_{v \in \Sigma_\tau (0)} m_\tau (v) (1- \mu)^2 \phi (v)^2 = (1- \mu)^2 \Vert \phi \Vert^2.$$ 
Therefore
$$\sum_{v \in \Sigma_\tau (0)}  \Vert (\phi^v)^1 \Vert^2 = \sum_{v \in \Sigma_\tau (0)} \Vert (\phi^v) \Vert^2- \sum_{v \in \Sigma_\tau (0)}  \Vert \Delta_{\tau v}^- \phi^v \Vert^2 = \Vert \phi \Vert^2 (1 - (1- \mu)^2) =\Vert \phi \Vert^2 \mu (2-\mu) .$$
Combine with the inequality in \eqref{ineq} to get
$$ \kappa  \Vert \phi \Vert^2 \mu (2-\mu) \geq  \mu \Vert \phi \Vert^2  \geq \lambda \Vert \phi \Vert^2 \mu (2-\mu) .$$
Dividing by $\Vert \phi \Vert^2 \mu$ yields
$$ \kappa   (2-\mu) \geq  1    \geq \lambda  (2-\mu) .$$
And this in turns yields 
$$ 2- \dfrac{1}{\kappa} \geq \mu \geq 2- \dfrac{1}{\lambda}.$$
Since $\mu$ was any positive eigenvalue of $\Delta_{\tau,0}^+$ we get that
$$Spec (\Delta_{\tau, 0}^+) \setminus \lbrace 0 \rbrace  \subseteq \left[2 - \dfrac{1}{\lambda}, 2 - \dfrac{1}{\kappa} \right].$$

\end{proof}
Our next step is to iterate the above lemma. Consider the function $f(x) = 2 - \frac{1}{x}$. One can easily verify that this function is strictly monotone increasing and well defined on $(0, \infty)$. Denote $f^2 = f \circ f, f^j = f \circ ... \circ f$. Simple calculations show that
$$f^j (x) = \dfrac{(j+1)x - j}{j x - (j-1)}$$
and therefore 
$$\forall m \in \mathbb{N}, f \left( \dfrac{m}{m+1} \right) = \dfrac{m-1}{m}, f(1)=1,$$
and
$$ \forall a >1, \lbrace f^j (a) \rbrace_{j \in \mathbb{N}} \text{ is a decreasing sequence and } \lim_{j \rightarrow \infty}  f^j (a) = 1.$$
The next corollary implies Theorem \ref{Intro-theorem1} of the introduction (for the homogeneous weight): 

\begin{corollary}
\label{SpectralGapDescent2}
Let $X$ be as in the lemma and $f$ as above. Assume that there are $\kappa \geq \lambda >\frac{n-1}{n}$ such that
$$\bigcup_{\sigma \in \Sigma (n-2)} Spec (\Delta_{\sigma, 0}^+) \setminus \lbrace 0 \rbrace \subseteq [\lambda, \kappa],$$
then for every $-1 \leq k \leq n-3$ we have
$$\bigcup_{\tau \in \Sigma (k)} Spec (\Delta_{\tau, 0}^+) \setminus \lbrace 0 \rbrace  \subseteq \left[ f^{n-k-2} (\lambda) , f^{n-k-2} (\kappa) \right] \subseteq \left( \dfrac{k+1}{k+2} , \dfrac{n-k}{n-k-1} \right].$$ 
\end{corollary}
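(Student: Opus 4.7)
The plan is to establish the corollary by iterating Lemma \ref{SpectralGapDescent1} starting from the hypothesis at level $n-2$ and descending one dimension at a time. Concretely, I would set up a downward induction on $k$ (equivalently, an upward induction on the number of iterations $j = n-2-k$), and use the following structural facts about $f(x) = 2 - \tfrac{1}{x}$: it is strictly increasing on $(0,\infty)$, it satisfies $f^j(x) = \tfrac{(j+1)x - j}{jx - (j-1)}$, and in particular $f^j\!\left(\tfrac{n-1}{n}\right) = \tfrac{n-1-j}{n-j}$ and $f^j(2) = \tfrac{j+2}{j+1}$.

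First I would handle the base case $k = n-3$: applying Lemma \ref{SpectralGapDescent1} directly with the hypothesis on $\Sigma(n-2)$ gives
\[
\bigcup_{\tau \in \Sigma(n-3)} \mathrm{Spec}(\Delta^+_{\tau,0}) \setminus \{0\} \subseteq \left[2 - \tfrac{1}{\lambda},\, 2 - \tfrac{1}{\kappa}\right] = [f(\lambda), f(\kappa)],
\]
using that $f$ is increasing to rewrite the interval. For the inductive step, suppose the inclusion $\bigcup_{\sigma \in \Sigma(k+1)} \mathrm{Spec}(\Delta^+_{\sigma,0}) \setminus \{0\} \subseteq [f^{n-k-3}(\lambda), f^{n-k-3}(\kappa)]$ has already been established. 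To apply Lemma \ref{SpectralGapDescent1} at level $k+1$ I must verify that $f^{n-k-3}(\lambda) > 0$. Since $\lambda > \tfrac{n-1}{n}$ and $f$ is increasing, $f^{n-k-3}(\lambda) > f^{n-k-3}(\tfrac{n-1}{n}) = \tfrac{k+2}{k+3} > 0$, so the lemma applies and yields the inclusion at level $k$ with bounds $[f(f^{n-k-3}(\lambda)), f(f^{n-k-3}(\kappa))] = [f^{n-k-2}(\lambda), f^{n-k-2}(\kappa)]$.

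For the second inclusion $[f^{n-k-2}(\lambda), f^{n-k-2}(\kappa)] \subseteq \left(\tfrac{k+1}{k+2}, \tfrac{n-k}{n-k-1}\right]$, the lower bound follows from the monotonicity of $f^{n-k-2}$ together with the assumption $\lambda > \tfrac{n-1}{n}$, which gives $f^{n-k-2}(\lambda) > f^{n-k-2}(\tfrac{n-1}{n}) = \tfrac{k+1}{k+2}$. For the upper bound, recall the general fact (noted in the remark following Proposition \ref{ProjOnConstProp}) that $\kappa(X_\sigma) \leq 2$ for any graph Laplacian, so $\kappa \leq 2$ and by monotonicity $f^{n-k-2}(\kappa) \leq f^{n-k-2}(2) = \tfrac{n-k}{n-k-1}$.

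The argument is essentially bookkeeping once one observes the three closed-form identities for $f^j$ evaluated at $\tfrac{n-1}{n}$, at $1$, and at $2$, so there is no real obstacle. The only point demanding a moment of care is verifying that the hypothesis of Lemma \ref{SpectralGapDescent1} (namely $f^{n-k-3}(\lambda) > 0$) continues to hold at every level of the iteration, which is exactly why the assumption $\lambda > \tfrac{n-1}{n}$ is the natural threshold: it is the unique value that guarantees $f^{j}(\lambda)$ remains positive for all $j$ up to $n-1$, so that the descent can be carried out all the way down to $k = -1$ (i.e., to the full complex $X = X_\emptyset$).
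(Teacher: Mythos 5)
Your proof is correct and follows essentially the same route as the paper: a downward induction iterating Lemma \ref{SpectralGapDescent1}, with positivity of the iterates $f^{j}(\lambda)$ guaranteed by $\lambda > \tfrac{n-1}{n}$ via $f^{j}\bigl(\tfrac{n-1}{n}\bigr) = \tfrac{n-1-j}{n-j}$, and the upper bound obtained from $\kappa \leq 2$ and $f^{j}(2) = \tfrac{j+2}{j+1}$. The paper's own proof is just a terser statement of exactly these points, so there is nothing to add.
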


\begin{proof}
The proof is a straightforward induction using Lemma \ref{SpectralGapDescent1}. One only needs to verify that for every $-1 \leq k \leq n-3$ we have $f^{n-k-2} (\lambda) >0$, but this is guaranteed by the condition $\lambda >\frac{n-1}{n}$ which implies that $f^{n-k-2} (\lambda) > \frac{k+1}{k+2}$. The upper bound on the spectrum stems from the fact that $\kappa \leq 2$ and therefore $f^{n-k-2} (\kappa) \leq f^{n-k-2} (2)= \frac{n-k}{n-k-1}$.
\end{proof}

\section{Spectral gap of links in partite complexes}

In this section we derive a result similar to Corollary \ref{SpectralGapDescent2} in the case where $X$ is a partite complex. Motivated by the discussion in \ref{Localization in partite complexes subsection} above, we want to bound the eigenvalues of the links of partite complexes from above, excluding the largest eigenvalue - see Corollary \ref{spectral descent in partite case - corollary} below. In the following two propositions, we start with an analysis of the structure of the eigenfunction of the largest eigenvalue of the links in the partite case:

\begin{proposition}
\label{kappa = 2 bound}
Let $X$  be a pure $n$-dimensional weighted simplicial complex such that all the links of $X$ of dimension $>0$ are connected. For every $-1 \leq k \leq n-3$, every $\tau \in X^{(k)}$ and every $\phi \in C^0 (X_\tau, \mathbb{R})$, 
$$\Delta_{\tau, 0}^+ \phi = \dfrac{n-k}{n-k-3} \phi \Rightarrow \forall \sigma \in X_\tau^{(n-k-1)}, \Delta_{\sigma, 0}^+ \phi^\sigma = 2 \phi^\sigma .$$
\end{proposition}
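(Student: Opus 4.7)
The plan is downward induction on $n-k-2$, the number of levels of descent from $\tau$ to a $1$-dimensional link, sharpening the argument of Lemma \ref{SpectralGapDescent1}. For any eigenfunction $\phi$ of $\Delta^+_{\tau,0}$ with eigenvalue $\mu>0$, the identity $\Delta^-_{\tau v,0}\phi^v = (1-\mu)\phi(v)$ from the proof of Lemma \ref{SpectralGapDescent1}, combined with the restriction formulas of Lemmas \ref{restNorm1} and \ref{restNorm2}, yields
\[
\sum_{v\in\Sigma_\tau(0)} \Vert d_{\tau v}\phi^v\Vert^2 \;=\; \mu\Vert\phi\Vert^2, \qquad \sum_{v\in\Sigma_\tau(0)} \Vert(\phi^v)^1\Vert^2 \;=\; \mu(2-\mu)\Vert\phi\Vert^2,
\]
where $(\phi^v)^1 := \phi^v - \Delta^-_{\tau v,0}\phi^v$ is the component of $\phi^v$ orthogonal to the constants on $X_{\tau v}$.

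In the base case $k=n-3$, every $X_{\tau v}$ is a graph, so $\Vert d_{\tau v}\phi^v\Vert^2 \leq 2\Vert(\phi^v)^1\Vert^2$, with equality iff $(\phi^v)^1$ is an eigenfunction of $\Delta^+_{\tau v,0}$ with eigenvalue $2$. Summing and using the two displayed identities yields
\[
\mu\Vert\phi\Vert^2 \;=\; \sum_v \Vert d_{\tau v}\phi^v\Vert^2 \;\leq\; 2\sum_v\Vert(\phi^v)^1\Vert^2 \;=\; 2\mu(2-\mu)\Vert\phi\Vert^2,
\]
and the hypothesized value of $\mu$ is precisely the one forcing $2(2-\mu)=1$, i.e. the top of the spectrum allowed by the descent. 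Hence the inequality is tight termwise, establishing the base case.

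For the inductive step, assume the statement at level $k+1$. At level $k$ the hypothesized eigenvalue $\mu$ factors as $f(\mu')$, where $\mu'$ is the corresponding extremum at level $k+1$ (so that $\mu = f^{n-k-2}(2)$ and $\mu' = f^{n-k-3}(2)$). By Corollary \ref{SpectralGapDescent2} the positive spectrum of each $\Delta^+_{\tau v,0}$ lies in $(0,\mu']$, so $\Vert d_{\tau v}\phi^v\Vert^2 \leq \mu'\Vert(\phi^v)^1\Vert^2$; summing and using the identity $\mu'(2-\mu)=1$ (equivalent to $\mu=f(\mu')$) again forces termwise equality, so each $(\phi^v)^1$ is an eigenfunction of $\Delta^+_{\tau v,0}$ with eigenvalue exactly $\mu'$. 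Applying the inductive hypothesis to $(\phi^v)^1$ inside $X_{\tau v}$ then delivers the desired conclusion at level $k$.

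The main technical point is the bookkeeping of the constant parts across the successive restrictions: the tight inequality naturally identifies $(\phi^\sigma)^1$ rather than $\phi^\sigma$ itself as the eigenfunction, and one must carry this through the induction so that the constants that are subtracted at each step do not obstruct reading off $2$ as an eigenvalue of the $1$-dimensional link Laplacian at the bottom of the descent.
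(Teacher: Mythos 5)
Your proof is correct and follows essentially the same route as the paper's: the paper phrases the key step contrapositively (a single restriction with a non-extremal Rayleigh quotient forces $\mu$ strictly below $\frac{n-k}{n-k-1}$), while you sum the termwise bounds against the exact identity $\sum_{v}\Vert(\phi^v)^1\Vert^2=\mu(2-\mu)\Vert\phi\Vert^2$ and note that the extremal eigenvalue, via $\mu'(2-\mu)=1$, forces termwise equality --- the same argument, with the same descending induction to the $1$-dimensional links. Your closing caveat about the constant components is well taken: the paper elides this and writes the intermediate conclusion as $\Delta^+_{\tau v,0}\phi^v=\frac{n-k-1}{n-k-2}\phi^v$ rather than for $(\phi^v)^1$, so your bookkeeping is, if anything, more careful than the source.
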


\begin{proof}
Let $-1 \leq k \leq n-2$, $\tau \in X^{(k)}$ and $\phi \in C^0 (X_\tau, \mathbb{R})$ such that $\Delta_0^+ \phi = \mu \phi$.  Assume there is a single $v \in X_\tau^{(0)}$ such that (in the notations of the proof of Lemma \ref{SpecGapLocalToGlobal1} )
$$ \dfrac{n-k-1}{n-k-2} \Vert (\phi^v)^1 \Vert^2 > \Vert d_{\tau v} \phi^v \Vert^2.$$
We note that $\phi^v \in C^0 (X_{\tau v}, \mathbb{R})$ and therefore by Corollary \ref{SpectralGapDescent2}, for any other $v \in X_\tau^{(0)}$, we have 
$$ \dfrac{n-k-1}{n-k-2} \Vert (\phi^v)^1 \Vert^2 \geq \Vert d_{\tau v} \phi^v \Vert^2.$$
Therefore we can repeat the proof of Lemma \ref{SpecGapLocalToGlobal1}, with strict inequalities. Namely, instead of inequality \eqref{ineq}, we can take
$$ \dfrac{n-k-1}{n-k-2} \sum_{v \in \Sigma_\tau (0)}  \Vert (\phi^v)^1 \Vert^2 > \mu \Vert \phi \Vert^2,$$
and complete the rest of the proof with strict inequalities and get
$$2 - \dfrac{1}{\frac{n-k-1}{n-k-2}} > \mu ,$$
which yields 
$$\dfrac{n-k}{n-k-1} > \mu.$$
Therefore 
$$\Delta_{\tau,0}^+ \phi = \dfrac{n-k}{n-k-1}  \phi \Rightarrow \forall v \in X_\tau^{(0)}, \Delta_{\tau v,0}^+ \phi^v = \dfrac{n-k-1}{n-k-2}  \phi^v.$$
Finish by induction on $k$, starting with $k=n-3$ and descending.
\end{proof}

The proposition below states that the $1$-skeleton of a partite complex has a known upper bound on the spectrum (this is a generalization of the fact the a bipartite graph has an eigenvalue $2$):  
 
\begin{proposition}
\label{upper bound of the spectrum in partite complexes - prop} 
Let $X$  be a pure $n$-dimensional weighted simplicial complex such that all the links of $X$ of dimension $>0$ are connected. If $X$ is $(n+1)$-partite then $\frac{n+1}{n} \in Spec (\Delta_0^+)$ and the space of eigenfunctions of the eigenvalue $\frac{n+1}{n}$ is spanned by the functions $\varphi_i$, $0 \leq i \leq n$ defined as
$$\varphi_i (u) = \begin{cases}
n & u \in S_i \\
-1 & \text{otherwise}
\end{cases}.$$
\end{proposition}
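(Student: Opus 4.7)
The plan has two parts: verify that each $\varphi_i$ is an eigenfunction with eigenvalue $\frac{n+1}{n}$, then show the $\frac{n+1}{n}$-eigenspace equals $\mathrm{span}(\varphi_0,\ldots,\varphi_n)$.

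For the first part, I first establish the following structural consequence of partiteness: for $u\in S_j$ and any $i\neq j$,
$$\sum_{v\in S_i,\,\{u,v\}\in X^{(1)}} m(\{u,v\}) \;=\; \frac{m(u)}{n}.$$
This follows from Proposition~\ref{weight in n dim simplices}: rewrite $m(u)=n!\sum_{\sigma\in X^{(n)},\,u\in\sigma}m(\sigma)$ and $m(\{u,v\})=(n-1)!\sum_{\sigma\in X^{(n)},\,\{u,v\}\subseteq\sigma}m(\sigma)$, then exchange the order of summation using the fact that every top-dimensional simplex containing $u$ contains exactly one vertex of $S_i$. With this equal-distribution property in hand, plugging $\varphi_i$ into the explicit formula for $\Delta_0^+$ and splitting the neighbour sum at $u$ by side immediately yields $\Delta_0^+\varphi_i=\frac{n+1}{n}\varphi_i$ in both cases $u\in S_i$ and $u\in S_j$ with $j\neq i$.

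Since $\sum_i\varphi_i=0$ and any $n$ of the $\varphi_i$'s are linearly independent, they span an $n$-dimensional subspace. To prove the reverse inclusion, I apply Proposition~\ref{kappa = 2 bound} with $k=-1$ and $\tau=\emptyset$: the equation $\Delta_0^+\phi=\frac{n+1}{n}\phi$ forces that for every $\sigma\in X^{(n-2)}$, the restriction $\phi^\sigma$ is a $2$-eigenfunction of $\Delta_{\sigma,0}^+$. Each such $X_\sigma$ is a connected bipartite graph with sides $T_p=S_p\cap X_\sigma^{(0)}$ and $T_q=S_q\cap X_\sigma^{(0)}$ (where $p,q$ are the two indices not appearing in $\sigma$), and in any connected bipartite graph the $2$-eigenspace of $\Delta_0^+$ is spanned by the $\pm 1$ function; hence $\phi$ is constant on each of $T_p$ and $T_q$ (with opposite signs) for every codimension-two $\sigma$.

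The key step, which I expect to be the main obstacle, is promoting this per-link constancy to global constancy on each $S_i$. For this I use Proposition~\ref{gallery connectedness}: given $u,u'\in S_i$, pick a gallery $\sigma_0,\ldots,\sigma_l\in X^{(n)}$ with $u\in\sigma_0$ and $u'\in\sigma_l$, and let $v_j$ denote the unique $S_i$-vertex of $\sigma_j$, so $v_0=u$ and $v_l=u'$. Whenever $v_j\neq v_{j+1}$, the shared codimension-one face $F_j=\sigma_j\cap\sigma_{j+1}$ contains no $S_i$-vertex, so removing any further vertex from $F_j$ produces a codimension-two face $\eta_j$ whose bipartite link $X_{\eta_j}$ contains both $v_j$ and $v_{j+1}$ on its $S_i$-side; the constancy established above then gives $\phi(v_j)=\phi(v_{j+1})$. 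Telescoping along the gallery yields $\phi(u)=\phi(u')$, so $\phi$ is constant on each $S_j$. Finally, since $\frac{n+1}{n}\neq 0$, $\phi$ is orthogonal to $\Ker\Delta_0^+$, which equals the constants by connectedness of the $1$-skeleton; a short computation using Proposition~\ref{weight in n dim simplices} together with the partite property shows that all sides have equal total weight $\frac{m(\emptyset)}{n+1}$, and hence any function constant on each $S_j$ and orthogonal to constants lies in $\mathrm{span}(\varphi_0,\ldots,\varphi_n)$, finishing the proof.
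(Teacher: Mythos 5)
Your proof is correct and follows essentially the same route as the paper: the same weight identity from Proposition \ref{weight in n dim simplices} to verify that each $\varphi_i$ is a $\frac{n+1}{n}$-eigenfunction, then Proposition \ref{kappa = 2 bound} plus gallery connectedness to propagate constancy of $\phi$ on each side via the bipartite codimension-two links. The only differences are presentational: you cite the classical fact about the $2$-eigenspace of a connected bipartite graph where the paper proves it as the $n=1$ case, and you spell out more explicitly than the paper the final step that equal side weights and orthogonality to constants place $\phi$ in $\mathrm{span}(\varphi_0,\ldots,\varphi_n)$.
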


\begin{proof}
First we verify that each $\varphi_i$ defined above is indeed an eigenfunction of the eigenvalue $\frac{n+1}{n}$. We check the following cases:
\begin{enumerate}
\item In the case $u \in S_i$, we have that
\begin{dmath*}
\Delta^+_0 \varphi_i (u) = \varphi_i (u) - \sum_{v \in X^{(0)}, (u,v) \in \Sigma(1)} \dfrac{m((u,v))}{m(u)} \varphi_i (v) = 
n - \sum_{v \in X^{(0)}, (u,v) \in \Sigma(1)} \dfrac{m((u,v))}{m(u)} (-1) =  
n + \sum_{v \in X^{(0)}, (u,v) \in \Sigma(1)} \dfrac{m((u,v))}{m(u)} = n+1 = \dfrac{n+1}{n} \varphi_i (u) . 
\end{dmath*}
\item In the case where $u \notin  S_i$, we have that 
\begin{dmath*}
\Delta^+_0 \varphi_i (u) = -1  - \sum_{v \in X^{(0)}, (u,v) \in \Sigma(1)} \dfrac{m((u,v))}{m(u)} \varphi_i (v) = 
-1 - \sum_{v \in X^{(0)} \setminus S_i , (u,v) \in \Sigma(1)} \dfrac{m((u,v))}{m(u)} (-1) - \sum_{v \in S_i , (u,v) \in \Sigma(1)} \dfrac{m((u,v))}{m(u)} n.
\end{dmath*}

Recall that by Proposition \ref{weight in n dim simplices} and by the fact that $X$ is pure $n$-dimensional and $(n+1)$-partite, we have that
\begin{dmath*}
m(u) = n! \sum_{\sigma \in X^{(n)}, u \subset \sigma } m(\sigma) = 
n! \sum_{v \in S_i, (u,v) \in \Sigma (1) } \sum_{\sigma \in X^{(n)}, \lbrace u, v \rbrace \subset \sigma } m(\sigma) =
n \sum_{v \in S_i, (u,v) \in \Sigma (1) } m((u,v)) .
\end{dmath*}
Similarly, 
$$ (n-1) m(u) = n \sum_{v \in X^{(0)} \setminus S_i, (u,v) \in \Sigma (1) } m((u,v)) .$$
Therefore we get
\begin{dmath*}
\Delta^+_0 \varphi_i (u) = -1 + \dfrac{n-1}{n} - 1  = - \dfrac{n+1}{n} =\dfrac{n+1}{n} \varphi_i (u) .
\end{dmath*}
\end{enumerate}
Next, we will prove that $\varphi_i$ span the space of eigenfunctions with eigenvalue $\frac{n+1}{n}$. 
For $n=1$, this is the classical argument for bipartite graphs repeated here for the convenience of the reader. Let $\phi \in C^0 (X,\mathbb{R} )$ such that $\Delta_0^+ \phi = 2 \phi$ and $X$ is a bipartite graph. There is $u_0 \in X^{(0)}$ such that $\forall v \in X^{(0)}, \vert \phi (u_0 ) \vert \geq \vert \phi (v ) \vert$. Without loss of generality $u_0 \in S_0$. One can always normalize $\phi$ such that $\phi (u_0) =1$ and for every other $v \in X^{(0)}$, $\vert \phi (v) \vert \leq 1$ . Then 
\begin{dmath*}
2 = \Delta \phi (u_0) = 1 - \sum_{v \in X^{(0)}, (u_0,v) \in \Sigma(1)} \dfrac{m((u_0,v))}{m(u_0)} \phi (v) = 
1 - \sum_{v \in X^{(0)}_1, (u_0,v) \in \Sigma(1)} \dfrac{m((u_0,v))}{m(u_0)} \phi (v)
\end{dmath*}
Therefore 
$$ \sum_{v \in X^{(0)}_1, (u_0,v) \in \Sigma(1)} \dfrac{m((u_0,v))}{m(u_0)} \phi (v) = -1.$$
Note that $ \sum_{v \in X^{(0)}_1, (u_0,v) \in \Sigma(1)} \dfrac{m((u_0,v))}{m(u_0)}  =1 $ and $\forall v, \phi (v) \geq -1$ and therefore we get that for every $v \in X^{(0)}_1$ with $(u_0,v) \in \Sigma(1)$ we get $\phi (v) = -1$. By the same considerations, for every $v \in X^{(0)}$ with $\phi (v) =-1$, we have 
$$u \in X^{(0)}, (v,u) \in \Sigma (1) \Rightarrow \phi (u) =1.$$ 
Therefore by iterating this argument and using the fact that the graph is connected, we get that 
$$\phi (u) = \begin{cases}
 1 & u \in S_0 \\
 -1 & u \in X^{(0)}_1
\end{cases},$$
and that is exactly $\varphi_0$ in the case $n=1$. Assume that $n>1$. \\
First, for every $0 \leq i \leq n$, note that $\chi_{S_i} = \frac{1}{n+1} (\varphi_i + \chi_{X^{(0)}}) $ (Recall that $\chi_{X^{(0)}}$ denotes the constant $1$ function and $\chi_{S_i}$ denotes the indicator function of $S_i$ ). 
Therefore every function $\phi$ of the form: 
$$\exists c_0,...,c_n \in \mathbb{R}, \forall u \in S_i, \phi (u) = c_i ,$$
is in the space spanned by the functions $\varphi_i$ and the constant functions. Therefore, for $\phi$ such that $\Delta_0^+ \phi = \frac{n+1}{n} \phi$, it is enough to show that $\phi$ is of the form 
$$\exists c_0,...,c_n \in \mathbb{R}, \forall u \in S_i, \phi (u) = c_i .$$
Let $\phi \in C^0 (X,\mathbb{R})$ such that $\Delta_0^+ \phi = \frac{n+1}{n} \phi$. Fix $0 \leq i \leq n$ and $u' \in S_i$. By Proposition \ref{gallery connectedness}, $X$ is gallery connected so for every $u \in S_i$ there is a gallery $\sigma_0,...,\sigma_l \in X^{(n)}$ connecting $u'$ and $u$. We will show by induction on $l$ that $\phi (u) = \phi (u')$. For $l=0$, $u=u'$ and we are done. Assume the claim is true for $l$. Let $u \in  S_i$ such that the shortest gallery connecting $u'$ and $u$ is  $\sigma_0,...,\sigma_{l+1} \in X^{(n)}$. By the fact that $X$ is $(n+1)$-partite, there is $u'' \in \sigma_l  \cap S_i$ therefore $u'', u$ are both in the link of  $\sigma_l  \cap \sigma_{l+1} \in X^{(n-1)}$. Since $n>1$, $\sigma_l  \cap \sigma_{l+1}$ is of dimension $>1$, therefore there is a non empty simplex $\tau \in X^{(n-2)}$ such that $\tau \subset \sigma_l  \cap \sigma_{l+1}$. Note that by the $(n+1)$-partite assumption of $X$, we have that the link of $X_\tau$ is a bipartite graph, containing $u''$ and $u$. From Proposition \ref{kappa = 2 bound} we have that 
$$\Delta_0^+ \phi^\tau = 2 \phi^\tau.$$
Therefore, from the case $n=1$, we get that 
$$\phi (u'') = \phi^\tau (u'') = \phi^\tau (u) =\phi (u).$$ 
By our induction assumption, $\phi (u') = \phi (u'') $ and therefore $\phi$ must be of the form stated above and we are done.
\end{proof}

The above proposition indicates that when dealing with an $(n+1)$-partite simplicial complex, one should think of the non-trivial spectrum of $\Delta^+_0$ as $Spec (\Delta^+_0 ) \setminus \lbrace 0 , \frac{n+1}{n} \rbrace$. 
Following this logic, we denote the space of non-trivial functions $C^0 (X,\mathbb{R} )_{nt}$ as 
$$C^0 (X,\mathbb{R} )_{nt} = span \lbrace \chi_{X^{(0)}}, \varphi_0,...,\varphi_n \rbrace^\perp .$$

\begin{proposition}
Let $\chi_{S_i}$ be the indicator function of $S_i$, then
$$C^0 (X,\mathbb{R} )_{nt} = span \lbrace \chi_{S_0},...,\chi_{S_n} \rbrace^\perp .$$
Moreover, for every $\phi \in C^0 (X,\mathbb{R} )$, the projection of $\phi$ on $C^0 (X,\mathbb{R} )_{nt}$ is 
$$\phi - (n+1) \sum_{j=0}^n \Delta^-_{(0,j)} \phi .$$
\end{proposition}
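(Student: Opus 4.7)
The plan is to attack the two assertions in sequence: first equate the two spans (equivalently the two orthogonal complements), and then identify the explicit projection formula by computing what $\Delta^-_{(0,j)}$ actually does to a $0$-form.

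For the span identification, I will write down the change of basis explicitly. Since $\varphi_i = (n+1)\chi_{S_i} - \chi_{X^{(0)}}$ and $\chi_{X^{(0)}} = \sum_{i=0}^n \chi_{S_i}$, each of $\chi_{X^{(0)}}, \varphi_0,\dots,\varphi_n$ lies in $\operatorname{span}\{\chi_{S_0},\dots,\chi_{S_n}\}$ and conversely $\chi_{S_i} = \frac{1}{n+1}(\varphi_i + \chi_{X^{(0)}})$, so the spans coincide and therefore so do their orthogonal complements.

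Next I will compute $\Delta^-_{(0,j)}\phi$ from Corollary \ref{side average operators} in the case $k=0$. For $\sigma=(v_0)$ the formula collapses to
$$\Delta^-_{(0,j)}\phi(v_0) = \chi_{S_j}(v_0)\sum_{v\in S_j}\frac{m(v)}{m(\emptyset)}\phi(v) = \frac{\langle \phi,\chi_{S_j}\rangle}{m(\emptyset)}\,\chi_{S_j}(v_0),$$
so $\Delta^-_{(0,j)}\phi$ is a scalar multiple of $\chi_{S_j}$. To match this with the orthogonal projection onto $\operatorname{span}\{\chi_{S_j}\}$, I need $\|\chi_{S_j}\|^2$. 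Here I will use the $(n+1)$-partiteness: every $\sigma\in X^{(n)}$ meets $S_j$ in exactly one vertex, so by Proposition \ref{weight in n dim simplices},
$$\|\chi_{S_j}\|^2 = \sum_{v\in S_j} m(v) = n!\sum_{v\in S_j}\sum_{\sigma\in X^{(n)},v\in\sigma} m(\sigma) = n!\sum_{\sigma\in X^{(n)}} m(\sigma) = \frac{m(\emptyset)}{n+1}.$$
Thus the projection of $\phi$ onto the one-dimensional subspace $\operatorname{span}\{\chi_{S_j}\}$ is $\frac{\langle\phi,\chi_{S_j}\rangle}{\|\chi_{S_j}\|^2}\chi_{S_j}=(n+1)\Delta^-_{(0,j)}\phi$.

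To finish, I will note that the $\chi_{S_j}$ are pairwise orthogonal (their supports are disjoint), so the orthogonal projection onto $\operatorname{span}\{\chi_{S_0},\dots,\chi_{S_n}\}$ is the sum of the one-dimensional projections, which equals $(n+1)\sum_{j=0}^n\Delta^-_{(0,j)}\phi$; subtracting from $\phi$ gives the projection onto $C^0(X,\mathbb{R})_{nt}$. None of these steps is conceptually hard — the one spot where care is needed is the weight computation $\|\chi_{S_j}\|^2 = m(\emptyset)/(n+1)$, where the $(n+1)$-partite hypothesis is essential so that the sum over $n$-simplices is counted exactly once for each side.
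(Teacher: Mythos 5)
Your proof is correct and follows essentially the same route as the paper's: identify $\operatorname{span}\lbrace \chi_{X^{(0)}},\varphi_0,\dots,\varphi_n\rbrace$ with $\operatorname{span}\lbrace\chi_{S_0},\dots,\chi_{S_n}\rbrace$, compute $\Vert\chi_{S_j}\Vert^2 = m(\emptyset)/(n+1)$, and recognize $(n+1)\Delta^-_{(0,j)}\phi$ as the orthogonal projection onto $\operatorname{span}\lbrace\chi_{S_j}\rbrace$. If anything, you are slightly more careful than the paper, which asserts $\sum_{v\in S_j}m(v)=m(\emptyset)/(n+1)$ without the counting argument via $n$-simplices and whose final display gives the projection onto $\operatorname{span}\lbrace\chi_{S_0},\dots,\chi_{S_n}\rbrace$ without explicitly subtracting it from $\phi$.
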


\begin{proof}
As noted in the proof of the proposition above, $\chi_{S_i} = \frac{1}{n+1} (\varphi_i + \chi_{X^{(0)}}) $. Also notice that
$$\chi_{X^{(0)}} = \sum_{i=0}^n \chi_{S_i} ,$$
$$\forall i, \varphi_i = \sum_{j=0}^n \chi_{X^{(0)}_j} + (n-1) \chi_{S_i}.$$
Therefore 
$$ span \lbrace \varphi_0,...,\varphi_n, \chi_{X^{(0)}} \rbrace =  span \lbrace \chi_{S_0},...,\chi_{S_n} \rbrace .$$
Notice that for every $j$, 
$$\Vert \chi_{S_j} \Vert^2 = \sum_{v \in S_j} m(v) = \dfrac{1}{n+1} m (\emptyset ),$$
and for every $\phi \in C^0 (X,\mathbb{R} )$
\begin{dmath*}
\langle \phi , \chi_{S_j} \rangle \chi_{S_j} = \left( \sum_{v \in S_j} \phi (v) \right) \chi_{S_j}  =  m(\emptyset ) \Delta^-_{(0,j)} \phi.
\end{dmath*}
Therefore, for every $\phi \in C^0 (X,\mathbb{R} )$, the projection of $\phi$ on $C^0 (X,\mathbb{R} )_{nt}$ is 
\begin{dmath*}
\sum_{j=0}^n \dfrac{1}{\Vert \chi_{S_j}  \Vert^2} \langle \phi, \chi_{S_j}  \rangle \chi_{S_j} = \dfrac{n+1}{m(\emptyset )} \sum_{j=0}^n  m(\emptyset ) \Delta^-{(0,j)} \phi = (n+1) \sum_{j=0}^n \Delta^-_{(0,j)} \phi .
\end{dmath*}
\end{proof}

Next, we have a technical tool to calculate to norm and Laplacian of functions in $C^0 (X, \mathbb{R})_{nt}$:

\begin{proposition}
Let $X$  be a pure $n$-dimensional, $(n+1)$-partite, weighted simplicial complex such that all the links of $X$ of dimension $>0$ are connected. Let $\phi \in C^0 (X, \mathbb{R})$. For every $0 \leq i \leq n$, define $\phi_{i} (u) \in C^0 (X, \mathbb{R})$ as follows:
$$\phi_{i} (u) = \begin{cases}
-n \phi (u) & u \in S_i \\
 \phi (u) & \text{otherwise}
\end{cases} .$$
Then 
\begin{enumerate}
\item If $\phi \in C^0 (X, \mathbb{R})_{nt}$, then for every $0 \leq i \leq n$, we have that $\phi_{i} (u) \in C^0 (X, \mathbb{R})_{nt}$. 
\item For every $\phi \in C^0 (X, \mathbb{R})$,
$$\sum_{i=0}^n \Vert \phi_{i} \Vert^2 = (n^2 + n) \Vert \phi \Vert^2 .$$
\item For every $\phi \in C^0 (X, \mathbb{R})$,
$$\sum_{i=0}^n \langle \phi_{i}, \Delta^+_0 \phi_{i} \rangle = \langle \phi , ((n+1)^2 I - (n+1) \Delta^+_0 ) \phi \rangle.$$
\end{enumerate} 
\end{proposition}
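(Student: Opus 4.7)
The plan is to verify each of the three claims essentially by direct computation, exploiting the $(n+1)$-partite structure and the characterization $C^0(X,\mathbb{R})_{nt} = \operatorname{span}\{\chi_{S_0},\dots,\chi_{S_n}\}^\perp$ proved in the preceding proposition.

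For (1), I will use the fact that $\phi \in C^0(X,\mathbb{R})_{nt}$ iff $\langle\phi,\chi_{S_j}\rangle = \sum_{v\in S_j} m(v)\phi(v) = 0$ for every $j$. Computing $\langle \phi_i, \chi_{S_j}\rangle = \sum_{v\in S_j} m(v)\phi_i(v)$ splits into two cases: if $j\neq i$ then $\phi_i$ agrees with $\phi$ on $S_j$, and if $j = i$ then $\phi_i = -n\phi$ on $S_i$. Either way the inner product is a constant multiple of $\langle\phi,\chi_{S_j}\rangle = 0$.

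For (2), I expand $\|\phi_i\|^2 = \sum_{u\in S_i} m(u) n^2 \phi(u)^2 + \sum_{u\notin S_i} m(u)\phi(u)^2$ and sum over $i$, noting that each vertex lies in exactly one $S_i$ and outside exactly $n$ of the sides; the two resulting terms are $n^2\|\phi\|^2$ and $n\|\phi\|^2$, giving $(n^2+n)\|\phi\|^2$.

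For (3), the main computation, I use $\langle \phi_i, \Delta_0^+ \phi_i\rangle = \|d\phi_i\|^2 = \tfrac{1}{2}\sum_{(u,v)\in \Sigma(1)} m((u,v))(\phi_i(u)-\phi_i(v))^2$. Since $X$ is $(n+1)$-partite, for each ordered edge $(u,v)$ we have $u\in S_a, v\in S_b$ with $a\neq b$, and the sum $\sum_{i=0}^n (\phi_i(u)-\phi_i(v))^2$ splits into three cases ($i=a$, $i=b$, or $i\notin\{a,b\}$). A short expansion of $(-n\phi(u)-\phi(v))^2 + (\phi(u)+n\phi(v))^2 + (n-1)(\phi(u)-\phi(v))^2$ gives $(n^2+n)(\phi(u)^2+\phi(v)^2) + (2n+2)\phi(u)\phi(v)$, independent of which sides $u$ and $v$ lie in. Substituting back, the quadratic part contributes $(n^2+n)\|\phi\|^2$ (via the identity $\sum_{(u,v)\in\Sigma(1)} m((u,v))\phi(u)^2 = \|\phi\|^2$, which follows from the balanced-weight relation $\sum_{v:(u,v)\in\Sigma(1)} m((u,v)) = m(u)$), and the cross-term contributes $(n+1)\sum_{(u,v)\in\Sigma(1)} m((u,v))\phi(u)\phi(v) = (n+1)(\|\phi\|^2 - \langle\Delta_0^+\phi,\phi\rangle)$, using the explicit formula for $\Delta_0^+$. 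Summing gives $(n+1)^2\|\phi\|^2 - (n+1)\langle\Delta_0^+\phi,\phi\rangle$, as required.

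The only mildly delicate part is the bookkeeping in (3): tracking factors of $2$ between ordered and unordered edges, and making sure the quadratic-in-$\phi(u),\phi(v)$ expression is independent of which two sides the edge crosses (this is what forces the answer to be a clean expression in $\Delta_0^+$). Once that cancellation is visible, the remainder is routine.
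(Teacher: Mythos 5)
Your proof is correct. Parts (1) and (2) coincide with the paper's argument: the paper likewise checks orthogonality to the $\chi_{S_j}$ using the preceding proposition, and sums $\Vert\phi_i\Vert^2$ over $i$ using the fact that each vertex lies in exactly one side. For part (3) you take a genuinely different (though equally elementary) route. The paper computes the operator $\Delta_0^+\phi_i$ pointwise, splitting into the cases $u\in S_i$ and $u\notin S_i$, obtaining $(\Delta_0^+\phi_i)(u)=(-n-1)\phi(u)+(\Delta_0^+\phi)(u)$ in the first case and $(\Delta_0^+\phi)(u)+(n+1)\sum_{v\in S_i}\frac{m((u,v))}{m(u)}\phi(v)$ in the second, and then sums the inner products over $i$; the partiteness is used to know that neighbours of a vertex in $S_i$ lie outside $S_i$. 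You instead pass to the Dirichlet form $\langle\phi_i,\Delta_0^+\phi_i\rangle=\Vert d\phi_i\Vert^2$ and organize the sum over $i$ edge by edge, where the case analysis $i=a$, $i=b$, $i\notin\{a,b\}$ for an edge crossing $S_a$ and $S_b$ yields the side-independent expression $(n^2+n)(\phi(u)^2+\phi(v)^2)+(2n+2)\phi(u)\phi(v)$. Your bookkeeping checks out: the balanced-weight identity $\sum_{v:(u,v)\in\Sigma(1)}m((u,v))=m(u)$ and the relation $\sum_{(u,v)\in\Sigma(1)}m((u,v))\phi(u)\phi(v)=\Vert\phi\Vert^2-\langle\Delta_0^+\phi,\phi\rangle$ are both valid in this framework, and the factor of $\tfrac12$ from ordered edges is handled consistently. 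The edge-based computation makes the cancellation across the sides visible in one line and avoids ever writing down $\Delta_0^+\phi_i$; the paper's vertex-based computation is more mechanical but reuses the explicit Laplacian formula already established. Both are complete proofs.
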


\begin{proof}
\begin{enumerate}
\item Let $\phi \in C^0 (X,\mathbb{R} )_{nt} $. Fix $0 \leq i \leq n$. Note that for every $0 \leq j \leq n$, we have that
$$\langle \phi , \chi_{X^{(0)}_j} \rangle = 0  \Rightarrow \langle \phi_{i} , \chi_{X^{(0)}_j} \rangle = 0, $$
and therefore by the above proposition $\phi_{i} (u) \in C^0 (X, \mathbb{R})_{nt}$. 
\item For every $0 \leq i \leq n$ we have that
\begin{dmath*}
\Vert \phi_{i} \Vert^2 = \sum_{u \in S_i} m(u) n^2 \phi (u)^2 +  \sum_{u \in X^{(0)} \setminus S_i} m(u) \phi (u)^2.
\end{dmath*}
Therefore
\begin{dmath*}
\sum_{i=0}^n \Vert \phi_{i} \Vert^2 = \sum_{u \in X^{(0)}} m(u) (n^2+n) \phi (u)^2 = (n^2 +n ) \Vert \phi \Vert^2 .
\end{dmath*}
\item For every $0 \leq i \leq n$, we will compute $\Delta^+_0 \phi_{i}$: For $u \in S_i$, we have that
\begin{dmath*}
(\Delta^+_0 \phi_{i}) (u) = -n \phi (u) - \sum_{v \in X^{(0)}, (u,v) \in \Sigma (1)} \dfrac{m((u,v))}{m(u)} \phi (v) = (-n-1) \phi (u) + (\Delta^+_0 \phi) (u). 
\end{dmath*}
For $u \in X^{(0)} \setminus S_i$ we have that
\begin{dmath*}
(\Delta^+_0 \phi_{i}) (u) = \phi (u) - \sum_{v \in X^{(0)} \setminus S_i, (u,v) \in \Sigma (1)} \dfrac{m((u,v))}{m(u)} \phi (v) - \sum_{v \in S_i, (u,v) \in \Sigma (1)} \dfrac{m((u,v))}{m(u)} (-n) \phi (v) = (\Delta^+_0 \phi) (u) + (n+1) \sum_{v \in S_i, (u,v) \in \Sigma (1)} \dfrac{m((u,v))}{m(u)} \phi (v) .
\end{dmath*}
Therefore
\begin{dmath*}
\langle  \phi_{i},\Delta^+_0 \phi_{i} \rangle = \sum_{u \in S_i} m(u) \phi (u) \left( -n (-n-1) \phi (u) -n (\Delta^+_0 \phi) (u) \right) + \sum_{u \in X^{(0)} \setminus S_i} m(u) \phi (u) \left( (\Delta^+_0 \phi) (u) + (n+1) \sum_{v \in S_i, (u,v) \in \Sigma (1)} \dfrac{m((u,v))}{m(u)} \phi (v) \right).
\end{dmath*}
This yields
\begin{dmath*}
\sum_{i=0}^n \langle  \phi_{i},\Delta^+_0 \phi_{i} \rangle = \sum_{u \in X^{(0)}} m(u) \phi (u) \left( n (n+1) \phi (u) -n (\Delta^+_0 \phi) (u) + n (\Delta^+_0 \phi) (u) + (n+1)  \sum_{v \in X^{(0)}, (u,v) \in \Sigma (1)} \dfrac{m((u,v))}{m(u)} \phi (v) \right) = \sum_{u \in X^{(0)}} m(u) \phi (u) \left( (n+1)^2 \phi (u) - (n+1) (\Delta^+_0 \phi) (u)  \right)= \langle \phi , ((n+1)^2 I - (n+1) \Delta^+_0 ) \phi \rangle.
\end{dmath*}
\end{enumerate}

\end{proof}

It is known that for bipartite graph, the spectrum of the Laplacian is symmetric around $1$. For $(n+1)$-partite complexes we have a weaker result that shows that the bounds of the non-trivial spectrum have some symmetry around $1$:

\begin{lemma}
\label{spectral bound in the partite case} 
Let $X$  be a pure $n$-dimensional, $(n+1)$-partite, weighted simplicial complex such that all the links of $X$ of dimension $>0$ are connected. Assume that $X$ is non-trivial, i.e., assume that $X$ has more than a single $n$-dimensional simplex. Denote 
$$\lambda (X) = \min \lbrace \lambda :  \lambda > 0, \exists \phi, \Delta^+_0 \phi = \lambda \phi \rbrace ,$$
$$\kappa (X) = \max \lbrace \lambda :  \lambda < \frac{n+1}{n}, \exists \phi, \Delta^+_0 \phi = \lambda \phi \rbrace .$$ 
Then 
$$1 - \frac{1}{n} (1 -\lambda (X)) \leq \kappa (X) \leq  1 - n (1-\lambda (X)).$$

\end{lemma}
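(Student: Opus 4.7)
The strategy is to exploit the family of maps $\phi \mapsto \phi_i$ from the preceding proposition: these preserve the non-trivial subspace $C^0(X,\mathbb{R})_{nt}$ and satisfy explicit norm and energy identities, so they convert a spectral bound at one extreme of the non-trivial spectrum into a bound at the other extreme.

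First I would observe that, by self-adjointness of $\Delta_0^+$ together with Proposition \ref{upper bound of the spectrum in partite complexes - prop}, the subspace $C^0(X,\mathbb{R})_{nt}$ is $\Delta_0^+$-invariant, and the spectrum of the restriction of $\Delta_0^+$ to it is exactly $\mathrm{Spec}(\Delta_0^+) \setminus \{0, \tfrac{n+1}{n}\}$. Consequently,
$$\lambda(X) = \min_{0 \neq \psi \in C^0(X,\mathbb{R})_{nt}} \frac{\langle \psi, \Delta_0^+ \psi\rangle}{\|\psi\|^2}, \quad \kappa(X) = \max_{0 \neq \psi \in C^0(X,\mathbb{R})_{nt}} \frac{\langle \psi, \Delta_0^+ \psi\rangle}{\|\psi\|^2},$$
and both extrema are attained; the non-triviality hypothesis on $X$ guarantees $C^0(X,\mathbb{R})_{nt} \neq \{0\}$, so that $\lambda(X)$ and $\kappa(X)$ are well defined.

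For the bound involving $\kappa(X)$ as an eigenvalue, I would take $\phi$ to be an eigenfunction realising $\kappa(X)$; then $\phi \in C^0(X,\mathbb{R})_{nt}$, and by part (1) of the preceding proposition each $\phi_i$ also lies in $C^0(X,\mathbb{R})_{nt}$. Since $\phi_i$ is the pointwise product of $\phi$ with a nowhere-zero scalar, $\phi_i \neq 0$ whenever $\phi \neq 0$, so the variational characterisation of $\lambda(X)$ applies and yields $\lambda(X)\|\phi_i\|^2 \leq \langle \phi_i, \Delta_0^+ \phi_i\rangle$. Summing over $i$ and substituting the two identities of the preceding proposition,
$$\lambda(X)\, n(n+1)\|\phi\|^2 \;\leq\; \sum_{i=0}^n \langle \phi_i, \Delta_0^+ \phi_i\rangle \;=\; \bigl((n+1)^2 - (n+1)\kappa(X)\bigr)\|\phi\|^2,$$
and dividing by $(n+1)\|\phi\|^2 > 0$ and rearranging produces one of the two affine relations between $\lambda(X)$ and $\kappa(X)$ asserted in the lemma. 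The other relation is obtained by the completely parallel argument: take $\phi$ realising $\lambda(X)$, use the upper variational inequality $\langle \phi_i, \Delta_0^+ \phi_i\rangle \leq \kappa(X)\|\phi_i\|^2$ for each $i$, sum, and solve.

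I do not expect any substantive obstacle. Once the identities of the preceding proposition are available, the proof is a direct application of the min--max principle on $C^0(X,\mathbb{R})_{nt}$; the only verifications are that the auxiliary functions $\phi_i$ remain in this subspace and are non-zero, both of which are essentially immediate. Conceptually, the map $\phi \mapsto \phi_i$ is encoding an affine involution on the non-trivial spectrum centred near $1$, which in the case $n=1$ (bipartite graphs) specialises to the classical symmetry $\mu \mapsto 2-\mu$.
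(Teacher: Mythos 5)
Your proposal is correct and follows essentially the same route as the paper's own proof: take the extremal eigenfunction, pass to the auxiliary functions $\phi_i$, use that they stay in $C^0(X,\mathbb{R})_{nt}$ together with the two identities $\sum_i \Vert\phi_i\Vert^2=(n^2+n)\Vert\phi\Vert^2$ and $\sum_i\langle\phi_i,\Delta_0^+\phi_i\rangle=\langle\phi,((n+1)^2I-(n+1)\Delta_0^+)\phi\rangle$, and apply the variational bound at the opposite end of the non-trivial spectrum. One remark: the inequality your computation (and the paper's) actually yields is $1+\frac{1}{n}(1-\lambda(X))\le\kappa(X)\le 1+n(1-\lambda(X))$, which does not literally match the signs printed in the lemma statement; the $n=1$ bipartite case, where $\kappa(X)=2-\lambda(X)$, confirms that the derived ``$+$'' version is the correct one, so you should not claim the rearrangement reproduces the statement verbatim.
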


\begin{proof}
Let $\phi \in C^0 (X,\mathbb{R} )_{nt} $ by the eigenfunction of $\kappa (X)$. By the above proposition, for every $0 \leq i \leq n$, $\phi_{i} \in C^0 (X,\mathbb{R} )_{nt} $ and therefore
$$ \langle \phi_{i} , \Delta^+_0 \phi_{i} \rangle \geq \lambda (X) \Vert \phi_{i}  \Vert^2 .$$
Summing on $i$ we get
$$  \sum_{i=0}^n \langle \phi_{i} , \Delta^+_0 \phi_{i} \rangle \geq \lambda (X)  \sum_{i=0}^n \Vert \phi_{i}  \Vert^2 .$$
By the equalities proven in the above proposition, this yields
$$   \langle \phi , ((n+1)^2 I - (n+1) \Delta^+_0 ) \phi \rangle \geq \lambda (X)  (n^2+n) \Vert \phi \Vert^2 .$$
Since we took $\phi$ to be the eigenfunction of $\kappa (X)$, this yields 
$$ ((n+1)^2 - (n+1) \kappa (X)) \Vert \phi \Vert^2 \geq  \lambda (X)  (n^2+n) \Vert \phi \Vert^2 .$$
Therefore 
$$1 +n (1- \lambda (X)) \geq \kappa (X).$$
By the same procedure, when $\phi$ is taken to be the eigenfunction of $\lambda (X)$, we get that
$$ ((n+1)^2 - (n+1) \lambda (X)) \Vert \phi \Vert^2 \leq  \kappa (X)  (n^2+n) \Vert \phi \Vert^2 ,$$
and therefore
$$1 + \dfrac{1}{n} (1- \lambda (X)) \leq \kappa (X) .$$

\end{proof}

The discussion above leads to a version of spectral descent in partite complexes: as explained above, for a partite complex $X$, for every $\tau \in \Sigma (k)$, $X_\tau$ is $(n-k)$-partite, i.e., the Laplacian of its $1$-skeleton always has $\frac{n-k}{n-k-1}$ as an eigenvalue. Therefore an analogue of the two-sided spectral gap is the spectrum without this eigenvalue. Combining the previous Lemma with Corollary \ref{SpectralGapDescent2} yields the following version of spectral descent in partite complexes, which implies Theorem \ref{Intro-theorem2} of the introduction (for the homogeneous weight):

\begin{corollary}
\label{spectral descent in partite case - corollary}
Let $X$ be a pure $n$-dimensional weighted simplicial complex which is partite such that all the links of $X$ of dimension $>0$ are connected. Assume that there is $\lambda >\frac{n-1}{n}$ such that
$$\bigcup_{\sigma \in \Sigma (n-2)} Spec (\Delta_{\sigma, 0}^+) \setminus \lbrace 0 \rbrace \subseteq [\lambda, 2],$$
then for every $-1 \leq k \leq n-3$, we have
$$\bigcup_{\tau \in \Sigma (k)} Spec (\Delta_{\tau, 0}^+) \setminus \lbrace 0, \frac{n-k}{n-k-1} \rbrace  \subseteq \left[ f^{n-k-2} (\lambda) , 1-(n-k)(1-f^{n-k-2} (\lambda)) \right],$$
where $f=2-\frac{1}{x}$. 
\end{corollary}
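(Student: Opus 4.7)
The plan is to combine Corollary~\ref{SpectralGapDescent2} (applied with $\kappa=2$) together with Lemma~\ref{spectral bound in the partite case} applied to each link. The first ingredient will deliver the lower bound $f^{n-k-2}(\lambda)\leq \mu$ as well as the trivial upper bound $\mu\leq \tfrac{n-k}{n-k-1}$, and the second ingredient will sharpen the upper bound once the top eigenvalue $\tfrac{n-k}{n-k-1}$ is excluded.

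First, since the operator norm of $\Delta_{\sigma,0}^+$ is always at most $2$ (as noted in the remark after Proposition~\ref{ProjOnConstProp}), the hypothesis of the present corollary is precisely the hypothesis of Corollary~\ref{SpectralGapDescent2} with $\kappa=2$. Invoking that corollary gives, for every $-1\leq k\leq n-3$ and every $\tau\in\Sigma(k)$,
\[
Spec(\Delta_{\tau,0}^+)\setminus\{0\}\subseteq \bigl[f^{n-k-2}(\lambda),\; f^{n-k-2}(2)\bigr]=\Bigl[f^{n-k-2}(\lambda),\; \tfrac{n-k}{n-k-1}\Bigr].
\]
In particular, the smallest positive eigenvalue $\lambda(X_\tau)$ of the $1$-skeleton Laplacian of $X_\tau$ satisfies $\lambda(X_\tau)\geq f^{n-k-2}(\lambda)$, which handles the lower-bound half of the statement.

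Next, for the refined upper bound, I would apply Lemma~\ref{spectral bound in the partite case} to the link $X_\tau$ itself. For $\tau\in\Sigma(k)$, $X_\tau$ is pure of dimension $n-k-1$, and its sides are precisely the intersections of the sides of $X$ with the vertex set of $X_\tau$ (those indexed by the $n-k$ partite classes of $X$ not meeting $\tau$), so $X_\tau$ is $(n-k)$-partite. Its positive-dimensional links are also links of $X$, hence connected by assumption, so all hypotheses of Lemma~\ref{spectral bound in the partite case} are met with $n-k-1$ in place of $n$. Proposition~\ref{upper bound of the spectrum in partite complexes - prop} identifies the ``trivial'' top eigenvalue as $\tfrac{n-k}{n-k-1}$, and the lemma then furnishes an upper bound on $\kappa(X_\tau)$ that is a monotone-decreasing affine function of $\lambda(X_\tau)$. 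Substituting the lower bound $\lambda(X_\tau)\geq f^{n-k-2}(\lambda)$ produced in the first step yields the claimed upper bound $1-(n-k)\bigl(1-f^{n-k-2}(\lambda)\bigr)$ for every eigenvalue of $\Delta_{\tau,0}^+$ that is neither $0$ nor $\tfrac{n-k}{n-k-1}$.

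The corollary is then just the conjunction of these two bounds. There is no substantively new argument; the only things to watch are the parameter substitutions when passing from the ambient complex $X$ in Lemma~\ref{spectral bound in the partite case} to the link $X_\tau$, and the identity $f^{n-k-2}(2)=\tfrac{n-k}{n-k-1}$, both of which are routine. The main conceptual step — that spectral gaps in top-dimensional links descend through the $f$-iteration while partite symmetry provides a complementary upper bound — has already been carried out in the two cited results, and this corollary merely assembles them.
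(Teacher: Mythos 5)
Your proposal is correct and is exactly the paper's argument: the paper gives no separate proof beyond the remark that the corollary follows by ``combining the previous Lemma with Corollary~\ref{SpectralGapDescent2},'' which is precisely your two-step assembly (the descent corollary with $\kappa=2$ for the lower bound and the identity $f^{n-k-2}(2)=\frac{n-k}{n-k-1}$, then Lemma~\ref{spectral bound in the partite case} applied to each $(n-k)$-partite link $X_\tau$ for the refined upper bound). You also correctly identify the one point that needs care — that the usable form of the lemma's upper bound is the monotone-decreasing affine function of $\lambda(X_\tau)$ appearing in its proof (of the shape $1+(n-k-1)(1-\lambda(X_\tau))$), so that substituting the lower bound $\lambda(X_\tau)\geq f^{n-k-2}(\lambda)$ goes in the right direction.
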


\section{Spectral gaps of higher Laplacians}

Our results bounding the spectral gaps in the links using the spectral gaps in the $1$-dimensional links yield spectral gaps in the higher Laplacians as a direct result of Garland's method explained above:

\begin{corollary}
\label{SpecGapCor}
Let $X$ be a pure $n$-dimensional weighted simplicial complex such that all the links of $X$  of dimension $>0$ are connected. Also, assume that $n>1$. Denote $f(x) = 2-\frac{1}{x}$ and $f^j$ to be the composition of $f$ with itself $j$ times (where $f^0$ is defined as $f^0 (x) = x$).
If there are $\kappa \geq \lambda > \frac{n-1}{n}$ such that
$$\bigcup_{\tau \in \Sigma (n-2)} Spec (\Delta_{\tau, 0}^+) \setminus \lbrace 0 \rbrace \subseteq [\lambda, \kappa].$$
Then for every $0 \leq k \leq n-1$:
$$ Spec (\Delta_{k}^+) \setminus \lbrace 0 \rbrace \subseteq [(k+1 ) f^{n-1-k} (\lambda) - k, (k+1) f^{n-1-k} (\kappa)- k], $$
$$Spec (\Delta_{k+1}^-) \setminus \lbrace 0 \rbrace \subseteq [(k+1 ) f^{n-1-k} (\lambda) - k, (k+1) f^{n-1-k} (\kappa)- k] .$$ 
\end{corollary}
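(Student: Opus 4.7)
The proof is essentially a two-step combination of results already established in the paper: the spectral descent through links (Corollary \ref{SpectralGapDescent2}) together with the local-to-global Garland-type bound (Corollary \ref{SpecGapLocalToGlobal2}). My plan is to first propagate the spectral gap hypothesis at the top-dimensional links ($(n-2)$-simplices) down to the links of $(k-1)$-simplices, and then invoke the local-to-global lemma to lift this back up to global operators $\Delta_k^+$ and $\Delta_{k+1}^-$.

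Concretely, fix $0 \leq k \leq n-1$. In the case $k = n-1$ there is nothing to descend: the hypothesis already bounds the spectrum on links of $(k-1)$-simplices (with $f^{n-1-k} = f^0 = \operatorname{id}$), so Corollary \ref{SpecGapLocalToGlobal2} applied with parameters $[\lambda,\kappa]$ yields the desired inclusion (one must check $\lambda > (n-1)/n = k/(k+1)$, which is assumed). For $0 \leq k \leq n-2$, apply Corollary \ref{SpectralGapDescent2} with its index set to $k-1$ (legitimate since $-1 \leq k-1 \leq n-3$), obtaining
\[
\bigcup_{\tau \in \Sigma (k-1)} \mathrm{Spec}(\Delta_{\tau,0}^+)\setminus\{0\} \subseteq \left[f^{n-1-k}(\lambda),\, f^{n-1-k}(\kappa)\right].
\]
Then feed these as the new $(\lambda',\kappa') := (f^{n-1-k}(\lambda), f^{n-1-k}(\kappa))$ into Corollary \ref{SpecGapLocalToGlobal2} to conclude
\[
\mathrm{Spec}(\Delta_k^+)\setminus\{0\},\ \mathrm{Spec}(\Delta_{k+1}^-)\setminus\{0\} \subseteq \left[(k+1)\lambda' - k,\ (k+1)\kappa' - k\right],
\]
which is precisely the stated bound.

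The only nontrivial bookkeeping is verifying the hypothesis $\lambda' > k/(k+1)$ required to apply Corollary \ref{SpecGapLocalToGlobal2}. This follows from the monotonicity of $f$ together with the fixed-point behavior of $f$ at the values $m/(m+1)$: since $f\bigl(\tfrac{m}{m+1}\bigr) = \tfrac{m-1}{m}$ and $\lambda > \tfrac{n-1}{n}$, one iteration gives $f(\lambda) > \tfrac{n-2}{n-1}$, and by induction $f^{n-1-k}(\lambda) > \tfrac{k}{k+1}$, as needed.

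I do not anticipate a real obstacle here: the conceptual work was done in establishing the descent corollary (iterated application of Lemma \ref{SpectralGapDescent1}) and the Garland-style local-to-global bound, so this corollary is a clean composition of the two. The only point that might warrant an explicit sentence is confirming that the hypothesis on $\lambda$ survives each descent step, as sketched above.
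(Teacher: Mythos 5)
Your proposal is correct and follows essentially the same route as the paper: descend the spectral gap via Corollary \ref{SpectralGapDescent2} to the links of $(k-1)$-simplices and then apply Corollary \ref{SpecGapLocalToGlobal2}. Your explicit check that $f^{n-1-k}(\lambda) > k/(k+1)$ (needed to invoke the local-to-global corollary) is a point the paper leaves implicit in the statement of Corollary \ref{SpectralGapDescent2}, and your separate treatment of the case $k=n-1$ is the right bookkeeping.
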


\begin{proof}
First apply Corollary \ref{SpectralGapDescent2} to get spectral gaps of $\Delta_{\tau, 0}^+$ for every $\tau \in \Sigma (k)$ when $-1 \leq k \leq n-3$ in terms of $f$ and $\lambda, \kappa$ (notice that since $X_\emptyset = X$ this takes care of the case $k=0$ in $3.$ of the theorem). Then apply Corollary \ref{SpecGapLocalToGlobal2} to finish the proof.
\end{proof}

\begin{remark}
As remarked earlier, if $m$ is the homogeneous weight function, then for every $\tau \in \Sigma (n-2)$, $\Delta_{\tau, 0}^+$ is the usual graph Laplacian on the graph $X_\tau$. This means that if one assigns the  homogeneous weight on $X$, then the spectral gap conditions stated in the above theorem are simply spectral gaps conditions of the usual graph Laplacian on each of the $1$-dimensional links. In concrete examples, these spectral gap conditions are usually attainable.
\end{remark}

\begin{corollary}
\label{contraction corollary}
Let $X$ be a pure $n$-dimensional weighted simplicial complex such that all the links of $X$  of dimension $>0$ are connected. Also, assume that $n>1$. Denote $f(x) = 2-\frac{1}{x}$ and $f^j$ to be the composition of $f$ with itself $j$ times (where $f^0$ is defined as $f^0 (x) = x$).
If there are $\kappa \geq \lambda > \frac{n-1}{n}$ such that
$$\bigcup_{\tau \in \Sigma (n-2)} Spec (\Delta_{\tau, 0}^+) \setminus \lbrace 0 \rbrace \subseteq [\lambda, \kappa].$$
Then for every $0 \leq k \leq n-1$,
\begin{align*}
\left\Vert \Delta_k^+ + \dfrac{\lambda + \kappa}{2} \Delta_k^- - (k+1) (\dfrac{\lambda + \kappa}{2} - \dfrac{k}{k+1}) I \right\Vert  \leq \\ \dfrac{k+1}{2} (f^{n-1-k} (\kappa) - f^{n-1-k} (\lambda)).
\end{align*}

\end{corollary}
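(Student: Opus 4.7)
The plan is to chain together two results already in the paper: the spectral descent Corollary \ref{SpectralGapDescent2} and the operator-norm bound from Garland's method, Corollary \ref{norm bound - local to global}. The descent step pushes the hypothesis on $(n-2)$-dimensional links down to spectral information on $(k-1)$-dimensional links, and then the norm bound converts that information into control of $\Delta_k^+ + c\,\Delta_k^- - d\cdot I$ with the desired constants.

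First, fix $0 \leq k \leq n-1$. I would apply Corollary \ref{SpectralGapDescent2} with its free parameter set to $k-1$ to upgrade the hypothesis on $\Sigma(n-2)$ into
$$\bigcup_{\tau \in \Sigma(k-1)} \mathrm{Spec}(\Delta_{\tau,0}^+) \setminus \{0\} \subseteq [f^{n-k-1}(\lambda),\, f^{n-k-1}(\kappa)],$$
which is valid whenever $-1 \leq k-1 \leq n-3$, i.e.\ for $0 \leq k \leq n-2$. The boundary case $k = n-1$ is just the hypothesis itself, consistent with the convention $f^0 = \mathrm{id}$.

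Second, I would invoke Corollary \ref{norm bound - local to global} with $\lambda' := f^{n-k-1}(\lambda)$ and $\kappa' := f^{n-k-1}(\kappa)$ in place of $\lambda$ and $\kappa$. Before doing so, I must verify the hypothesis $\lambda' > \tfrac{k}{k+1}$. This follows from the identity $f\bigl(\tfrac{m}{m+1}\bigr) = \tfrac{m-1}{m}$ recorded before Corollary \ref{SpectralGapDescent2}: iterating gives $f^{n-k-1}\bigl(\tfrac{n-1}{n}\bigr) = \tfrac{k}{k+1}$, and since $f$ is strictly increasing on $(0,\infty)$, the standing assumption $\lambda > \tfrac{n-1}{n}$ yields $\lambda' > \tfrac{k}{k+1}$, as required.

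With the hypotheses in place, Corollary \ref{norm bound - local to global} directly produces
$$\left\Vert \Delta_k^+ + \tfrac{\lambda'+\kappa'}{2}\Delta_k^- - (k+1)\bigl(\tfrac{\lambda'+\kappa'}{2} - \tfrac{k}{k+1}\bigr)I\right\Vert \leq (k+1)\,\tfrac{\kappa'-\lambda'}{2},$$
and substituting the definitions of $\lambda'$ and $\kappa'$ gives the claimed operator norm bound, whose right-hand side is precisely $\tfrac{k+1}{2}\bigl(f^{n-1-k}(\kappa) - f^{n-1-k}(\lambda)\bigr)$. The proof is essentially a one-line composition of the two prior corollaries, so there is no substantive obstacle; the only subtlety worth flagging is the monotonicity-and-fixed-point calculation that ensures $\lambda' > \tfrac{k}{k+1}$, which is what permits Corollary \ref{norm bound - local to global} to be applied at each level.
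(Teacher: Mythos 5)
Your proof is correct and is exactly the paper's argument: the paper's own proof is the one-line ``straightforward combination of Corollary \ref{SpectralGapDescent2} and Corollary \ref{norm bound - local to global}'', and you have supplied the details, including the necessary check that $f^{n-1-k}(\lambda) > \tfrac{k}{k+1}$ via the fixed-point identity $f\bigl(\tfrac{m}{m+1}\bigr)=\tfrac{m-1}{m}$ and monotonicity. The only caveat is that what this composition literally yields has $\tfrac{f^{n-1-k}(\lambda)+f^{n-1-k}(\kappa)}{2}$ as the coefficient of $\Delta_k^-$ and inside the $I$ term, so the $\tfrac{\lambda+\kappa}{2}$ printed on the left-hand side of the corollary is evidently a typo in the statement (the intervals $[f^{n-1-k}(\lambda),f^{n-1-k}(\kappa)]$ and $[\lambda,\kappa]$ are not nested, so the unprimed version does not follow from this argument); this is an issue with the paper's statement, not a gap in your proof.
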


\begin{proof}
The proof is a straightforward combination of Corollary \ref{SpectralGapDescent2} and Corollary \ref{norm bound - local to global}.
\end{proof}

Similarly, in the case of partite complexes, we get that:

\begin{corollary}
\label{norm bound - n+1 partite case}
Let $X$  be a pure $n$-dimensional, $(n+1)$-partite, weighted simplicial complex such that all the links of $X$ of dimension $>0$ are connected. Denote $f(x) = 2-\frac{1}{x}$ and $f^j$ to be the composition of $f$ with itself $j$ times (where $f^0$ is defined as $f^0 (x) = x$).
If there is $\lambda > \frac{n-1}{n}$ such that
$$\bigcup_{\tau \in \Sigma (n-2)} Spec (\Delta_{\tau, 0}^+) \setminus \lbrace 0 \rbrace \subseteq [\lambda, 2].$$
Then for every $0 \leq k \leq n-1$,
\begin{dmath*}
\left\Vert \Delta^+_{k} +\frac{n+1-k}{n-k}   \Delta^-_{k} - \left(  \dfrac{2+(n-k)(1-f^{n-1-k}(\lambda))}{2} \right)  I  - \left( \frac{(n+1-k)^2}{n-k} - (n+1-k)^2 \dfrac{2+(n-k)(1-f^{n-1-k}(\lambda))}{2} \right) \cdot \sum_{j=0}^n \Delta^-_{(k,j)}  \right\Vert \leq (k+1)(n+1-k) \dfrac{1 - f^{n-1-k}(\lambda)}{2} ,
\end{dmath*}
where $\Vert. \Vert$ denotes the operator norm.
\end{corollary}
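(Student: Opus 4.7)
The plan is to combine two previously established results in the natural way: the partite spectral descent (Corollary~\ref{spectral descent in partite case - corollary}) propagates the assumed spectral bound from the $(n-2)$-dimensional links down to the $(k-1)$-dimensional links, and Theorem~\ref{Contraction in partite case thm} converts that information into the desired operator norm bound on $\Delta^+_k + \tfrac{n+1-k}{n-k}\Delta^-_k$ up to a correction in the $\sum_j \Delta^-_{(k,j)}$ term. This parallels the non-partite argument for Corollary~\ref{contraction corollary}, which combines Corollary~\ref{SpectralGapDescent2} with Corollary~\ref{norm bound - local to global}.

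Concretely, fix $0\leq k\leq n-1$ and set $\mu = f^{n-1-k}(\lambda)$. For $0\leq k\leq n-2$, apply Corollary~\ref{spectral descent in partite case - corollary} with $k$ replaced by $k-1$: for every $\tau\in\Sigma(k-1)$, the link $X_\tau$ is $(n+1-k)$-partite and $(n-k)$-dimensional, and its Laplacian spectrum, excluding $0$ and the partite eigenvalue $\frac{n+1-k}{n-k}$, lies in an interval $[\lambda',\kappa']$ with $\lambda'=\mu$. For the boundary case $k=n-1$, the $(k-1)=(n-2)$-dimensional links are exactly those appearing in the hypothesis, so $\mu=\lambda$ and we use the assumption directly (noting that the largest eigenvalue $2$ coincides with $\tfrac{n+1-k}{n-k}$ for $k=n-1$ and is thus excluded).

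Before invoking Theorem~\ref{Contraction in partite case thm}, we must verify its hypothesis $\lambda'>\tfrac{k}{k+1}$. This follows from the calculation, already recorded in the paper, that $f^j(\tfrac{n-1}{n})=\tfrac{n-1-j}{n-j}$, and the strict monotonicity of $f$ on $(0,\infty)$: since $\lambda>\tfrac{n-1}{n}$, iteration gives $\mu=f^{n-1-k}(\lambda)>f^{n-1-k}(\tfrac{n-1}{n})=\tfrac{k}{k+1}$. With the hypothesis verified, Theorem~\ref{Contraction in partite case thm} applies and yields
\[
\left\Vert \Delta^+_k + \tfrac{n+1-k}{n-k}\Delta^-_k + \Bigl(k-(k+1)\tfrac{\lambda'+\kappa'}{2}\Bigr)I - \Bigl(\tfrac{(n+1-k)^2}{n-k} - (n+1-k)^2\tfrac{\lambda'+\kappa'}{2}\Bigr)\sum_{j=0}^n\Delta^-_{(k,j)}\right\Vert \leq (k+1)\tfrac{\kappa'-\lambda'}{2}.
\]

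The final step is purely algebraic: substitute the values of $\lambda'$ and $\kappa'$ coming from the partite descent (so that $\kappa'-\lambda'=(n+1-k)(1-\mu)$ and $\tfrac{\lambda'+\kappa'}{2}$ collapses to $\tfrac{2+(n-k)(1-\mu)}{2}$ after accounting for the constant $k$ shift), simplify, and read off the stated inequality. The main (and only real) obstacle is this bookkeeping---ensuring the constants $(k+1)(n+1-k)(1-\mu)/2$ and the coefficients multiplying $I$ and $\sum_j\Delta^-_{(k,j)}$ match the target form---which is routine but requires care with the factors of $\frac{n+1-k}{n-k}$ that arise from the partite structure of each link.
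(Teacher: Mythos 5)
Your proposal follows exactly the paper's own (one-line) proof: combine the partite spectral descent (Corollary \ref{spectral descent in partite case - corollary}), applied to the $(k-1)$-dimensional links, with Theorem \ref{Contraction in partite case thm}, and your added checks (the hypothesis $f^{n-1-k}(\lambda)>\frac{k}{k+1}$ via monotonicity of $f$, and the boundary case $k=n-1$) are correct. The only caveat is in the final bookkeeping: the values $\kappa'-\lambda'=(n+1-k)(1-\mu)$ and $\frac{\lambda'+\kappa'}{2}=\frac{2+(n-k)(1-\mu)}{2}$ that you quote cannot both hold with $\lambda'=\mu$, but this inconsistency is inherited from sign/index discrepancies already present in the statements of Lemma \ref{spectral bound in the partite case} and Corollary \ref{spectral descent in partite case - corollary}, not from your argument.
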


\begin{proof}
The proof is a straightforward combination of Corollary \ref{spectral descent in partite case - corollary} with Theorem \ref{Contraction in partite case thm}.
\end{proof}

\bibliographystyle{alpha}
\bibliography{biblI}

\def\polhk#1{\setbox0=\hbox{#1}{\ooalign{\hidewidth
  \lower1.5ex\hbox{`}\hidewidth\crcr\unhbox0}}}
\begin{thebibliography}{LMM16}

\bibitem[Bor75]{Borel}
Armand Borel.
\newblock Cohomologie de certains groupes discretes et laplacien {$p$}-adique
  (d'apr\`es {H}. {G}arland).
\newblock pages 12--35. Lecture Notes in Math., Vol. 431, 1975.

\bibitem[B{\'S}97]{BS}
W.~Ballmann and J.~{\'S}wi{\polhk{a}}tkowski.
\newblock On {$L^2$}-cohomology and property ({T}) for automorphism groups of
  polyhedral cell complexes.
\newblock {\em Geom. Funct. Anal.}, 7(4):615--645, 1997.

\bibitem[Gar73]{Gar}
Howard Garland.
\newblock {$p$}-adic curvature and the cohomology of discrete subgroups of
  {$p$}-adic groups.
\newblock {\em Ann. of Math. (2)}, 97:375--423, 1973.

\bibitem[Gro10]{Grom}
Mikhail Gromov.
\newblock Singularities, expanders and topology of maps. {P}art 2: {F}rom
  combinatorics to topology via algebraic isoperimetry.
\newblock {\em Geom. Funct. Anal.}, 20(2):416--526, 2010.

\bibitem[HJ13]{HJost}
Danijela Horak and J{\"u}rgen Jost.
\newblock Spectra of combinatorial {L}aplace operators on simplicial complexes.
\newblock {\em Adv. Math.}, 244:303--336, 2013.

\bibitem[LM06]{LM}
Nathan Linial and Roy Meshulam.
\newblock Homological connectivity of random 2-complexes.
\newblock {\em Combinatorica}, 26(4):475--487, 2006.

\bibitem[LMM16]{LMM}
Alexander Lubotzky, Roy Meshulam, and Shahar Mozes.
\newblock Expansion of building-like complexes.
\newblock {\em Groups Geom. Dyn.}, 10(1):155--175, 2016.

\bibitem[Lub12]{LubExpGr}
Alexander Lubotzky.
\newblock Expander graphs in pure and applied mathematics.
\newblock {\em Bull. Amer. Math. Soc. (N.S.)}, 49(1):113--162, 2012.

\bibitem[Lub14]{LubHighDim}
Alexander Lubotzky.
\newblock Ramanujan complexes and high dimensional expanders.
\newblock {\em Jpn. J. Math.}, 9(2):137--169, 2014.

\bibitem[MW09]{MW}
R.~Meshulam and N.~Wallach.
\newblock Homological connectivity of random {$k$}-dimensional complexes.
\newblock {\em Random Structures Algorithms}, 34(3):408--417, 2009.

\bibitem[Par17]{P}
Ori Parzanchevski.
\newblock Mixing in {H}igh-{D}imensional {E}xpanders.
\newblock {\em Combin. Probab. Comput.}, 26(5):746--761, 2017.

\bibitem[PRT16]{PRT}
Ori Parzanchevski, Ron Rosenthal, and Ran~J. Tessler.
\newblock Isoperimetric inequalities in simplicial complexes.
\newblock {\em Combinatorica}, 36(2):195--227, 2016.

\end{thebibliography}

\end{document}